\title{Higher Order Unfitted Space-Time Methods for Transport Problems\thanks{Submitted to the editors DATE.
}}
\author{Erik Burman\thanks{Department of Mathematics, University College London, UK, \texttt{e.burman@ucl.ac.uk}} \and
Fabian Heimann\thanks{Department of Mathematics, University College London, UK, \texttt{f.heimann@ucl.ac.uk}}}
\crefname{assumption}{Assumption}{Assumptions}
\crefname{hypothesis}{Hypothesis}{Hypotheses}
\crefname{claim}{Claim}{Claims}
\crefname{conjecture}{Conjecture}{Conjectures}
\providecommand{\vertiii}[1]{{\left\vert\kern-0.25ex\left\vert\kern-0.25ex\left\vert #1
\right\vert\kern-0.25ex\right\vert\kern-0.25ex\right\vert}}
\pgfplotsset{
cycle list/Set1-5,
cycle multiindex* list={
mark list*\nextlist
Set1-5\nextlist
},
}
\providecommand{\bi}[1]{\hyperlink{def:bi}{b_{i}}}
\providecommand{\ThnG}[1]{\hyperref[eq:new_st_isoparam_regions]{\mathcal{T}_{h,#1}^{\Gamma}}}
\newcommand{\jump}[1]{[ #1 ]}
\newcommand{\tripplenorm}[1]{||| #1 |||}
\newcommand{\tripplejump}[1]{||[ #1 ]||}
\begin{document}

\maketitle

\begin{abstract}
In this article, we present an Unfitted Space-Time Finite Element method for the scalar transport equation posed on moving domains. We consider the case of the domain boundary being transported by the same velocity field as the scalar concentration inside the physical domain. A standard continuous Galerkin Finite element space is considered on a fixed background mesh, as well as tensor product Space-Time elements, which can be discontinuous along time slice boundaries. For the computational geometry, we opt for a spatially second-order accurate approximation variant in the mathematical analysis. In particular, we establish stability in a problem-specific norm and prove a priori error bounds of high order. Numerical examples illustrate these theoretical findings.
\end{abstract}

\begin{keywords}
  moving  domains, unfitted  FEM, space-time FEM, higher order FEM, transport problems
\end{keywords}

\begin{AMS}
65M60, 65M85, 65D30
\end{AMS}

\section{Introduction}
Finite Element methods (FEM) provide a powerful framework for the numerical solution of partial differential equations, which appear in a wide range of physical applications, including flow simulations \cite{brennerscott, ern2021finite}. In this paper, we present a particular novel Finite Element method for the transport (or advection) equation.

In the time-dependent transport problem, an initial concentration function $u_0(x)$ is assumed to be given on a domain $\Omega(t_0) \subset \mathbb{R}^d, (d=1,2,3)$, which could model for instance a chemical concentration in a droplet. In addition, a transport field $\vec{w}$ is given, and we are interested in solving for a function $u(x,t)$, which describes the concentration after some time $t$ has passed and both species and the overall domain are transported by $\vec{w}$. We include the case of $\mathrm{div}(\vec{w}) \neq 0$. This physical process can be both seen as a problem of mathematical interest in itself, as well as a relevant model problem for more involved flow simulations, such as Navier-Stokes flows. Note that we also model the limit case of vanishing diffusion of a convection-diffusion problem, which -- in comparison -- entails some simplifications and challenges.

Drawing from the guiding example of a droplet deforming in a flow process, we focus on the moving domain setup, i.e. we assume the physical domain boundary to be transported with $\vec{w}$ as well. This poses an interesting challenge for formulating a Finite Element method, as these rely on a computational mesh to yield a discrete solution function space. One strategy for handling this complexity, Unfitted Finite Elements (alternative labels include CutFEM, fictitious domain methods) have been developed in the last decades and center around the idea of generating a fixed background mesh on a suitable domain $\tilde \Omega \subset \mathbb{R}^d$, where $\Omega(t) \subset \tilde \Omega$ for all times $t$ of interest \cite{burman15}. This allows for a straightforward handling of the computational domain, if stability is ensured by e.g. appropriate stabilisation bilinear forms, and according methods of numerical integration on the cut elements are available.

The approach in this article combines Continuous Galerkin (CG) Finite Elements in space with discontinuous along time slice 1D elements in time. In that way, we derive a space-time Finite Element method of high order, where the meshing task is solved on a background domain of spatial dimension, and prismatic elements are (implicitly) considered for space and time. Among the innovations of the method is a space-time variant of the Ghost penalty stabilisation, which will solve both stability issues in relation to small cuts and stabilise the weak form of the transport equation as it relates to the use of CG elements in space.

In relation to the discrete geometries, we make use of a specific feature of the physical problem at hand: Because of the straightforward boundary conditions and the locality of the differential operator, we observe that a slight geometrical error in the computational domain as compared to the physical one is not limiting high order convergence of the overall method, assuming initial data and flow field are known on an appropriate neighbourhood of the exact domain. In the Unfitted FE framework, this observation entails that the use of isoparametric mapped elements is not needed, which simplifies the rigorous mathematical analysis. We include the variant method using discrete higher order geometries and isoparametric finite elements in the presentation and numerical examples to set a computational benchmark and illustrate our construction further. (For details on the particulars of the isoparametric mapping considered, we refer the reader to \cite{L_CMAME_2016, LR_IMAJNA_2018, heimann2024geometrically}.)

As it relates to the mathematical analysis, we establish inf-sup stability of the proposed stabilised bilinear form in a problem specific norm. This implies the unique solvability or well-posedness of the discrete problem. Combining this result with approximation bounds and interpolation errors, we obtain overall a priori error bounds of higher order, which are illustrated by numerical examples. In the $L^2$ norm, we obtain rigorous estimates of the order $h^{k+\frac{1}{2}}$, whilst numerical experiments show order $h^{k+1}$ for salient families of meshes. Moreover, the theoretical results imply a convergence with $h^{k}$ in the material derivative norm, which is also observed numerically in the (full space-time) $H^1$ norm, for simultaneous refinements in space and time respectively.
\subsection{Related Methods in the literature}
The method at hand relates to the literature on Unfitted Finite Element methods as follows: First, note that relevant techniques in the Unfitted setup had been developed first in the merely spatial case. (C.f. e.g. \cite{burman15, Burman_Hansbo_Larson_Zahedi_2025} and the references therein for an overview.) Later, applications of these to time-dependent problems have been studied as well, for a variety of partial differential equations. It is instructive to distinguish time stepping (often also called Eulerian) methods and space-time methods, where the former are based on a spatial discretisation combined with time stepping methods such as Runge-Kutta or BDF schemes, whilst in the latter case finite elements in space and time are employed. As it relates to time stepping methods, the convection-diffusion problem on unfitted moving domains was studied in \cite{LO_ESAIM_2019}, with higher order generalisations investigated in \cite{LL_ARXIV_2021}. The generalisation to Stokes problems was considered in \cite{IMAJNA_vWRL_2021, burman2020eulerian}. As for the space-time methods, the convection-diffusion problem was considered in \cite{HLP2022, FH_PHD_2025, heimann2025discretizationerroranalysishigh, MYRBACK2024117245, Z18, HLZ16, BADIA202360}\footnote{Note that most closely related, the present work can be seen as the limit case of diffusion constant $\to 0$ with repect to \cite{FH_PHD_2025}.}. Computational accounts of Navier-Stokes systems moreover were introduced in \cite{anselmann2021cutfemNSE, FRACHON201977}.

Another relevant area of literature relates to the transport equation posed on fixed domains, both for the time-dependent and time-independent variant. Broadly, many methods follow the Discountinuous Galerkin Finite Element approach for these problems. \cite[Chapters 2, 3]{di2011mathematical} Lately, also Continuous Galerkin methods including additional stabilisation terms have been developed for related problems. (See e.g. \cite[Chapter 61]{ern2021finite3} and the references therein for an overview, or \cite{burman2022weighted} for a recent example of a Continuous Interior Penalty (CIP) stabilisation variant method.) The particular time-dependent Ghost penalty stabilisation employed in this article can be understood as a member of this broader class of stabilisation variants, now applied to the moving space-time setting.

In relation to the literature background on Ghost penalty stabilisations, we mention the initial work \cite{burman2010ghost}. In the space-time context, a variant of this stabilisation called direct space-time Ghost penalty was developed in \cite{preuss18, heimann20, HLP2022, MYRBACK2024117245, heimann2025discretizationerroranalysishigh}.
In this article, we adapt the variant of \cite{heimann2025discretizationerroranalysishigh, FH_PHD_2025} to the case without diffusion and with the dual role of stabilising CG for transport, as well as yielding discrete inequalities on cut meshes relevant for stability.
\subsection{Structure of the article}
The presentation in the remainder of the article proceeds as follows. In \Cref{sect_method_intro}, we start by putting forward the details of the proposed numerical method. This includes the detailed description of the model problem, which motivates the definition of the bilinear and linear forms. Both uncurved and curved mesh variant methods are introduced. Next, in \Cref{sect_stab_analysis} we establish inf-sup stability of the uncurved mesh method. Combining this with interpolation results in \Cref{sect_a_pr_est} yields higher order a priori error bounds. In \Cref{sect_num_exp}, numerical examples are provided.
\section{Method Introduction} \label{sect_method_intro} Before introducing the details of the method, we present our problem of consideration.
\subsection{Model problem} We are interested in solving the transport or advection equation on moving domains. In particular, let a time interval of interest $t \in [0,T]$ be given, as well as a bounded moving domain $\Omega(t) \subset \mathbb{R}^d, d=2,3$, a convection field $\vec{w}(x,t)$, $x \in \Omega(t), t \in [0,T]$, and initial concentration data $u_0 (x), x \in \Omega(0)$.\footnote{Technicalities about e.g. domains of definition for $\vec{w}$, $u_0$ will be given later.} The aim is to model the time evolution of a concentration $u(x,t)$ for $x \in \Omega(t), t\in [0,T]$, where both concentration values at fixed points as well as the domain boundary $\partial \Omega$ are assumed to move with $\vec{w}$. This function is given as the solution to the following partial differential equation in strong form: Find $u= u(x,t)$ such that
\begin{align}
 \partial_t u + \mathrm{div}(\vec{w} u) &= f &&\mathrm{on} \quad \Omega(t)\quad \forall t \in [0,T], \label{strong_form_problem} \\
 u(x,0) &= u_0(x) &&\mathrm{on} \quad \Omega(0), \nonumber
\end{align}
where $f=f(x,t)$ represents a right-hand side function modeling sources or sinks. Moreover, we assume that $\Omega(t)$, or $\partial \Omega(t)$, is moving in time with $\vec{w}$ as well. Note that this implies that the formulation \Cref{strong_form_problem} is well posed insofar as no part of $\partial \Omega(t)$ would need inflow (e.g. Dirichlet) data.\footnote{Compare e.g. to the formulation \cite[Eq. (3.1)]{di2011mathematical} and note that for determining the inflow part, the inner product with the relative convection velocity, which vanishes here, needs to be considered.}

This particular feature of the problem of interest, needing no boundary data, entails the following observation, which is relevant for motivating our use of a discrete shifted geometry.
\begin{lemma}[Extending domain for moving domain transport problem] \label{lemma_extending_domain}
 Assume a time-dependent domain $\Omega^e(t)$ is given, which has
 \begin{equation}
  \Omega(t) \subseteq \Omega^e(t) \quad \forall t \in [0,T],
 \end{equation}
 together with an extended transport velocity field, $\vec{w}^e$, defined on $\Omega^e(t)$ with $\vec{w}^e(\cdot,t)|_{\Omega(t)} = \vec{w}(\cdot,t)$, an initial data function $u_0^e$ defined on $\Omega^e(0)$ so that $u_0^e|_{\Omega(0)} = u_0$, and an extended right-hand side $f^e$ defined on $\Omega^e(t)$ with $f^e(\cdot,t)|_{\Omega(t)} = f(\cdot,t)$. Further assume $\partial \Omega^e(t)$ is moving with $\vec{w}^e$, and let $u^e$ be defined as the solution to the extended problem
 \begin{align}
 \partial_t u^e + \mathrm{div}(\vec{w}^e u^e) &= f^e &&\mathrm{on} \quad \Omega^e(t)\quad \forall t \in [0,T], \label{strong_form_problem_ext} \\
 u^e(x,0) &= u_0^e(x) &&\mathrm{on} \quad \Omega^e(0), \nonumber
\end{align}
Then, denoting by $u$ the solution to \Cref{strong_form_problem}, it holds
\begin{equation}
 u^e (\cdot, t) |_{\Omega(t)} = u(\cdot, t) \quad \forall t \in [0,T].
\end{equation}
\end{lemma}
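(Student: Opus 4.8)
The plan is to argue by the method of characteristics, exploiting the hypothesis that $\partial\Omega(t)$ is transported by $\vec{w}$: this makes the family $\Omega(t)$ invariant under the associated flow, so information never enters $\Omega(t)$ from outside and the two solutions must agree there. Concretely, for $x_0 \in \Omega^e(0)$ let $t \mapsto X(t;x_0)$ denote the solution of the characteristic ODE $\dot X = \vec{w}^e(X,t)$, $X(0;x_0) = x_0$; under the (implicit) smoothness assumptions on $\vec{w}^e$ this flow is well-defined on $[0,T]$, depends continuously on $x_0$, and — since $\partial\Omega^e(t)$ moves with $\vec{w}^e$ — restricts for each $t$ to a homeomorphism of $\Omega^e(0)$ onto $\Omega^e(t)$, so that the composition $t\mapsto u^e(X(t;x_0),t)$ is well defined. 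First I would record the elementary computation that along any such characteristic the conservative transport equation \Cref{strong_form_problem_ext} reduces to the linear scalar ODE $\frac{d}{dt}\,u^e(X(t;x_0),t) = f^e(X(t;x_0),t) - u^e(X(t;x_0),t)\,\mathrm{div}(\vec{w}^e)(X(t;x_0),t)$ with initial value $u_0^e(x_0)$, and that the analogous reduction holds for $u$ and \Cref{strong_form_problem} along the characteristics of $\vec{w}$.

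The key geometric step is to show that for $x_0 \in \Omega(0)$ the whole characteristic stays inside the physical domain, i.e. $X(t;x_0) \in \Omega(t)$ for all $t \in [0,T]$, equivalently $\Omega(t) = X(t;\Omega(0))$. This follows from uniqueness of solutions of the characteristic ODE together with the assumption that $\partial\Omega(t)$ moves with $\vec{w}^e$: the boundary is itself swept out by characteristics, so by the non-crossing property a characteristic issued from the open set $\Omega(0)$ can never meet $\partial\Omega(t)$, and by continuity of $t \mapsto X(t;x_0)$ it therefore remains in the (connected) set $\Omega(t)$. The same statement read backwards gives surjectivity: every point of $\Omega(t)$ is of the form $X(t;x_0)$ for some $x_0 \in \Omega(0)$.

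Combining the two steps finishes the argument. Fix $x_0 \in \Omega(0)$; by the invariance step $X(t;x_0) \in \Omega(t)$ for all $t$, and on $\Omega(t)$ we have $\vec{w}^e = \vec{w}$, $f^e = f$ (hence also $\mathrm{div}(\vec{w}^e) = \mathrm{div}(\vec{w})$ there), while $u_0^e(x_0) = u_0(x_0)$. Therefore $t \mapsto u^e(X(t;x_0),t)$ and $t \mapsto u(X(t;x_0),t)$ solve the \emph{same} linear scalar ODE with the \emph{same} initial datum, so by Gr\"onwall's inequality (or simply uniqueness for linear ODEs) they coincide on $[0,T]$. Since $X(t;\cdot)$ maps $\Omega(0)$ onto $\Omega(t)$, this gives $u^e(\cdot,t)|_{\Omega(t)} = u(\cdot,t)$ for every $t \in [0,T]$, which is the assertion.

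I expect the main obstacle to be making the invariance step fully rigorous at the stated level of generality, that is, converting the informal hypothesis ``$\partial\Omega(t)$ moves with $\vec{w}^e$'' into the precise identity $\Omega(t) = X(t;\Omega(0))$ and justifying that characteristics cannot cross the moving boundary. This rests on the regularity tacitly required of $\vec{w}^e$ (Lipschitz in space is enough) to guarantee a unique flow that is continuous in the initial point, from which the non-crossing property is immediate via Picard--Lindel\"of; boundedness of $\mathrm{div}(\vec{w})$, needed for the Gr\"onwall step, then comes for free. The remaining ingredients — the characteristic reduction of the PDE and the ODE uniqueness — are standard and routine.
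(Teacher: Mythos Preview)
Your argument via characteristics is correct, but it is considerably more elaborate than what the paper does. The paper's entire proof is the single sentence ``The restriction of $u^e$ satisfies \Cref{strong_form_problem}.'' In other words: the PDE is local, so $u^e|_{\Omega(t)}$ solves $\partial_t u + \mathrm{div}(\vec{w}u) = f$ on $\Omega(t)$ (since $\vec{w}^e$, $f^e$ agree with $\vec{w}$, $f$ there), and it satisfies the initial condition because $u_0^e|_{\Omega(0)} = u_0$; uniqueness of the solution to \Cref{strong_form_problem}---which the paper has already asserted as well-posedness just before the lemma---then forces $u^e|_{\Omega(t)} = u$.

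What you have written is essentially an explicit proof of that uniqueness via the characteristic method, including the domain-invariance step $\Omega(t) = X(t;\Omega(0))$ that justifies why no boundary data are needed. This buys you a self-contained argument that does not rely on an external well-posedness statement, at the cost of extra regularity hypotheses on $\vec{w}^e$ (Lipschitz in space) that you correctly flag. The paper, by contrast, treats well-posedness as given and records only the trivial observation that restriction commutes with the differential operator. Both are valid; yours is the honest version of what the paper leaves implicit.
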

\begin{proof}
 The restriction of $u^e$ satisfies \Cref{strong_form_problem}.
\end{proof}

Let us introduce some notations. First, we assume that the time-dependent domain is given implicitly by a levelset function $\phi$:
\begin{assumption}[Levelset function]
 There exists a scalar valued function $\phi$ so that for all $t \in [0,T]$
 \begin{equation}
  \Omega(t) = \left\{ x \in \mathbb{R}^d \, | \, \phi(x,t) < 0 \right\}.
 \end{equation}
\end{assumption}
We define the space-time domain as follows
\begin{definition}[Space-time domain]
 The space-time domain $Q \subset \mathbb{R}^{d+1}$ is defined as
 \begin{equation}
  Q := \bigcup_{t \in [0,T]} \Omega(t) \times \{ t \}.
 \end{equation}
\end{definition}
Moreover, we define a spatial neighbourhood (or extension by a neighbourhood) of $Q$ for a width parameter $\delta > 0$ as follows:
\begin{equation}
 U_\delta (Q) := \{ (x,t) \in \mathbb{R}^d \times [0,T] \, | \, \mathrm{dist}_{\mathbb{R}^d} (x,\Omega(t))<\delta \}.
\end{equation}
In the light of these notations and motivated by the observation \Cref{lemma_extending_domain}, we can make the following assumption on the existence of a strong form solution.
\begin{assumption} \label{u_ext_assumption}
 Assume there exists some $\delta > 0$ and a set $U \subset \mathbb{R}^d \times [0,T]$, $U \supseteq U_\delta$, together with functions $\vec{w}^e$, and $f^e$ defined on $U$, which extend $\vec{w}$ and $f$ on $Q$ respectively, and a function $u_0^e$, which extends $u_0$ on $\Omega(0)$. Moreover, assume that for all $t \in [0,T]$, the spatial boundary $\partial_s U$ moves with $\vec{w}^e$.\footnote{This part of the assumption is the reason why we allowed for $U \supseteq U_\delta$.}
 
 We assume that there exists a solution $u^e \in H^{\mathrm{reg}(u)}(U)$ to the strong form extended problem \Cref{strong_form_problem_ext}, which is bounded as
 \begin{equation}
  \| u^e \|_{H^{\mathrm{reg}(u)}(U)} \lesssim \| u^e \|_{H^{\mathrm{reg}(u)}(Q)}. \label{eq_bnd_u_e}
 \end{equation}
\end{assumption}
Here and throughout the article, $\| \cdot \|_{H^k(Q)}$ denotes the usual Sobolev space norm of order $k$. By $\mathrm{reg}(u)$, we denote an integer parameter for the regularity of $u$ in the space-time Sobolev space manner, to be specified later in the final estimate. Moreover, we use the symbol $A \lesssim B$ to indicate the existence of a constant $C$ independent of exact or discrete geometries so that $A \leq C \cdot B$.\footnote{As a further remark on notation, we will only occasionally denote the extended functions $f^e$, $\vec{w}^e$, $u_0^e$ with their $^e$ index when we want to specifically highlight this aspect. For improved overall readability, whenever any of the three functions is denoted in their non-extended version such as $u_0$ on a larger domain, it is understood that strictly speaking $u_0^e$ is meant.} By construction, \Cref{lemma_extending_domain} implies that $u^e$ solves also \Cref{strong_form_problem}.
\subsection{Discrete Method involving uncurved elements}
We turn our attention now to formulating the discrete version of this problem. First note that in the Unfitted FE paradigm, the following mesh assumption is salient.
\begin{assumption}[Unfitted meshes and discrete levelset function]
 Assume there exists a fixed-in-time polygonal background domain $\tilde \Omega \subset \mathbb{R}^d$, so that $\forall t \in [0,T] \ \Omega(t) \subset \tilde \Omega$. Let $T \in \mathcal{T}_h$ be a family of simplicial meshes with maximal element size $h$ on $\tilde \Omega$. Denote the interior facets of the mesh as $\mathcal{F}_h^i$.
 
 In relation to the time discretisation, we assume that the time interval $[0,T]$ is subdivided into elements or time slices $I_n$ of equal width as
 \begin{equation}
  I_n = [t_{n-1}, t_n], \quad \Delta t = t_n - t_{n-1} \quad n=1,\dots,N, \quad t_0 = 0, t_N = T.
 \end{equation}

 We assume that the exact levelset function $\phi$ is defined on all of $\tilde \Omega \times [0,T]$. The discrete geometry is implicitly described in a related manner by an approximate levelset function $\phi^{\mathrm{lin}}$. We are specifically interested in using the setup of \cite{heimann2024geometrically} for $\phi^{\mathrm{lin}}$, which relies on a temporally discrete approximation of $\phi$, called $\phi_h$. It is of the order $q_t$ in relation to the 1D finite elements in time on $I_n$. In this article, we assume $q_t\geq 1$. (And give a computational side remark on $q_t = 0$ in \Cref{remark_on_qt0}.) Next, $\phi^{\mathrm{lin}}$ is defined for any $t \in [0,T]$ as the elementwise (in relation to the mesh $T \in \mathcal{T}_h$) linear interpolation of $\phi_h$. This implies, for instance (see \cite{heimann2024geometrically} for further details)
 \begin{equation}
  \| \phi - \phi^\mathrm{lin} \|_{\infty, \tilde \Omega \times [0,T]} \lesssim h^2 + \Delta t^{q_t+1}, \quad \| \nabla \phi - \nabla \phi^\mathrm{lin} \|_{\infty, \tilde \Omega \times [0,T]} \lesssim h + \Delta t^{q_t+1}.
 \end{equation}

 Furthermore, $\phi_h$, and by construction then also $\phi^\mathrm{lin}$, shall be continuous in time. Being able to realise this in the general moving domain setup is one of the reasons why we choose at least $q_t = 1$.
\end{assumption}
The spatially element-wise linear approximation $\phi^\mathrm{lin}$ is common in Unfitted Finite Element Methods as it allows for straightforward numerical integration. \cite{burman15, L_PHD_2015} Specifically, elementwise straight cuts allow for a decomposition of the computational integration domain into few simplices. On these simiplices, it is straightforward to map standard Gaussian integration rules. Relating this to the transport problem without essentially any boundary conditions, the piecewise linear discrete geometry presents a good discrete geometry candidate by deviating at most $h^2$ far and allowing for the mentioned straightfoward numerical integration procedure.

Based on $\phi^\mathrm{lin}$, we introduce the following notation for discrete space- and space-time domains:
\begin{align}
 \Omega^{\text{lin}}(t) :&= \{ x \in \tilde \Omega \, | \, \phi^{\text{lin}} (x,t) < 0 \}, \quad Q^{\text{lin},n} := \bigcup_{t \in I_n} \Omega^{\text{lin}}(t) \times \{ t \}.
\end{align}
We denote the extended domain, which is the collection of all elements containing some part of the discrete domain, on a time slice $I_n$ as
\begin{align}
 \mathcal{E}(Q^{\text{lin},n}) :&= \bigcup \{ T \times I_n \, | \, (T \times I_n) \cap Q^{\text{lin},n} \neq \varnothing, T \in \mathcal{T}_h \} . 
\end{align}
Note that because of the asymptotic closeness of $\phi$ and $\phi^{\text{lin}}$, for some sufficiently small $h$ and $\Delta t$, the domains $Q^{\text{lin},n}$ and $\mathcal{E}(Q^{\text{lin},n})$ will be contained in $U$ as in Assumption \ref{u_ext_assumption}. In the following, we assume that $h$ and $\Delta t$ are always sufficiently small for this set inclusion to hold.

As for the discrete spaces, first assume a spatial polynomial order $k_s$ and a temporal polynomial order $k_t$ are given, which we will sometimes abbreviate as $k = (k_s,k_t)$. Then, the standard Continuous Galerkin and Discontinuous Galerkin spaces on $\tilde \Omega$ are given as
\begin{align}
 V_h^{k_s} &:= \{ v \in H^1(\tilde \Omega) \, | \, v|_T \in \mathcal{P}^{k_s}(T) \ \forall T \in \tilde{\mathcal{T}}_h \}, \\
 V_{\text{DG}}^{k_s} &:= \{ v \in L^2(\tilde \Omega) \, | \, v|_T \in \mathcal{P}^{k_s}(T) \ \forall T \in \tilde{\mathcal{T}}_h \}.
\end{align}
Next, we define the corresponding tensor-product space-time spaces regarding the time interval $I_n$ as
\begin{align}
 W_h^{n,(k_s,k_t)} := V_h^{k_s} \otimes \mathcal{P}^{k_t}( [t_{n-1},t_n]), \label{discrete_space_full_backgr_dom} \quad
 W_{\text{DG}}^{n,(k_s,k_t)} := V_{\text{DG}}^{k_s} \otimes \mathcal{P}^{k_t}( [t_{n-1},t_n]).
\end{align}
From the first space, we want to activate those degrees of freedom, which are related to the domain $\mathcal{E}(Q^{\text{lin},n})$ of the respective time slice in order to save unnecessary degrees of freedom for computational reasons. This motivates to define for $\mathcal{Q}$ a space-time domain such as $\mathcal{E}(Q^{\text{lin},n})$
\begin{equation}
 W_{\text{cut}}^{n}(\mathcal{Q}) = \{ v \in W_h^{n,k} \, | \, v \textnormal{ vanishes outside of } \mathcal{Q} \textnormal { as element of } W_h^{n,k}\}.
\end{equation}
In terms of functions for all time slices, we define in line with the Discountinuous Galerkin in time paradigm
\begin{align}
 W_h &= \{ v \in L^2(\bigcup_{n=1}^N \mathcal{E}(Q^{\text{lin},n})) \, | \, \forall n=1,\dots, N: v|_{\mathcal{E}(Q^{\text{lin},n})} \in W_{\text{cut}}^{n}(\mathcal{E}(Q^{\text{lin},n})) \}
\end{align}
For defining the linear and bilinear forms of our method, we will use discrete geometries, which are indicated by an upper index $h$. As we have opted to use the piecewise linear discrete geometries for our main analysis, we define
\begin{equation}
 \Omega^h(t) := \Omega^{\text{lin}}(t), \quad Q^{h,n} := Q^{\text{lin},n}, \quad \mathcal{E}(Q^{h,n}) := \mathcal{E}(Q^{\text{lin},n}). \label{discretehdomains_def_as_lin}
\end{equation}
Moreover, the global space-time variants should be defined as
\begin{equation}
 Q^h = \bigcup_{n =1}^N Q^{h,n}, \quad \mathcal{E}(Q^h) = \bigcup_{n =1}^N \mathcal{E}(Q^{h,n}).
\end{equation}
In terms of these domains, we now introduce the bilinear form of the problem, which contains the physical summand stemming from testing with $v$ and integrating over the domain, and upwind jump penalties as far as DG across time slice boundaries is concerned.
 \begin{align}
   B_h(u,v) = &\sum_{n=1}^N (\partial_t u + \mathrm{div} (\vec{w} u), v)_{Q^{h,n}}
  + \sum_{n=1}^{N-1} (\jump{u}^n, v^n_+)_{\Omega^{h}(t_n)} + (u^0_+, v^0_+)_{\Omega^{h}(t_0)}. \label{eq_def_Bh}
 \end{align}
 Here and in the following,
 $
  \jump{u}^n := \lim_{s \searrow t_n} u(s) - \lim_{s \nearrow t_n} u(s).
 $
 Note that the well-posedness of the jump relies on the fact that the discrete domain is continuous, by virtue of the continuity of $\phi^{\text{lin}}$. 

Accordingly, the right-hand side $f$ is given as
 \begin{equation}
  f_h(v) := \sum_{n=1}^N (f,v)_{Q^{h,n}} + (u_0, v_+^0)_{\Omega^h(t_0)}. \label{eq_def_fh}
 \end{equation}
The bilinear form $B_h$ is complemented by a Ghost-penalty stabilisation bilinear form. We use a space-time version of the direct Ghost penalty as in \cite{preuss18, heimann20, HLP2022, heimann2025discretizationerroranalysishigh}. It will operate based on interior facets of the mesh $F \in \mathcal{F}_h^i$. Fix any such $F \in \mathcal{F}_h^i$ and let $T_1 \in \mathcal{T}_h$ and $T_2 \in \mathcal{T}_h$ be those elements such that $F = \overline{T_1} \cap \overline{T_2}$. We denote the union of those elements as the facet patch $\omega_F = T_1 \cup T_2$. On this facet patch, we define a jump operator for elementwise polynomials as
\begin{equation}
 \jump{u}_{\omega_F} |_{T_i} (x,t) = u(x,t) - \mathcal{E}^p (u|_{T_{3-i}})(x,t), \quad i=1,2,
\end{equation}
where $\mathcal{E}^p$ denotes the polynomial extension.

Based on this notation, we define the stabilisation bilinear form contribution from $F$ as
 \begin{align}
 j_F^n(u,v) :&= \int_{t_{n-1}}^{t_n} \int_{\omega_F} \gamma_J h^j \jump{u}_{\omega_F} \jump{v}_{\omega_F} \mathrm{d}x \mathrm{d}t,
\end{align}
where $\gamma_J$ denotes some scalar valued stabilisation parameter and $j$ the integer scaling of the stabilisation. We choose $j= -1$ for this article. The stabilisation shall operate on all facets inside the extended domain, i.e.
\begin{align}
  \mathcal{F}^{n} = \{F \! \in\! \mathcal{F} \textnormal{ s.t. } \exists T_1 \neq T_2,& T_1 \times I_n, T_2 \times I_n \! \subseteq\! \mathcal{E}(Q^{\text{lin},n}) \textnormal{ with } F = T_1 \! \cap\! T_2 \}.
 \end{align}
 Summing over all these facets and all time slices, we obtain the overall stabilisation bilinear form
\begin{align}
 j^n_h(u,v) &= \sum_{F \in \mathcal{F}^{n}} j_F^n(u,v), \quad J(u,v) = \sum_{n=1}^N j^n_h(u,v).
\end{align}
\begin{remark}[Comparison to Ghost penalty in Convection-Diffusion discretisations]
 The proposed variant of the direct Ghost penalty is similar to the variants considered in \cite{preuss18, heimann20, heimann2025discretizationerroranalysishigh, FH_PHD_2025} for convection-diffusion problems. We highlight the two relevant differences. First, in the presence of physical diffusion, the scaling $j$ needs to be chosen as $j=-2$. Second, the set of facets, where the Ghost penalty is applied, $\mathcal{F}^{n}$ is chosen larger in our case to countain both the physical interior and the boundary. This is due to the dual role of the Ghost penalty to stabilise small cut elements and allow for CG elements in space with the transport equation. Because in the convection-diffusion applications the only role of the Ghost penalty is the first one, choosing facets relating to the cut boundary elements suffices in that case.
\end{remark}
We can now pose the discrete problem: Find $u_h \in W_h$ such that for all $v_h \in W_h$
\begin{equation}
 B_h(u_h,v_h) + J(u_h,v_h) = f_h(v_h). \label{eq_discrete_problem}
\end{equation}
 \subsection{Discrete Method on higher order geometries} \label{discrete_curved_geom_method}
 The discrete geometries $\Omega^h(t)$, $Q^{h,n}$ as defined above have second order geometrical accuracy. This order of accuracy can be increased to higher order by an isoparametric mapping, see e.g. \cite{heimann2024geometrically, heimann2025discretizationerroranalysishigh}. When diffusion is involved with according boundary conditions, or initial data or right hand side cannot be extended accordingly, this will be needed to achieve a method of overall high order convergence order. We discuss the according method for the transport equation here as well for computational comparison.\footnote{Comparing the rigorous mathematical results of e.g. \cite{FH_PHD_2025, heimann2025discretizationerroranalysishigh} to the material at hand, we regard the detailled mathematical analysis of the isoparametric method as beyond the scope of this article and leave it as an open question for future research.}

 Let an isoparametric mapping in space and time be given, $\Theta_h(\cdot,t)\colon \tilde \Omega \to \tilde \Omega$, and denote by $\Theta_h^\mathrm{st}$ the space-time version, $\Theta_h^\mathrm{st} (x,t) := (\Theta_h(x,t) ,t)^T$.

 In terms of this, we can define the curved discrete regions as
 \begin{equation}
 \Omega^h(t) := \Theta_h(\Omega^{\text{lin}}(t),t), \quad Q^{h,n} := \Theta_h^{\text{st}}(Q^{\text{lin},n}),
 \end{equation}
 which shall be understood to replace \Cref{discretehdomains_def_as_lin} for this subsection. Then, the discrete space $W_h$ is concatenated with the inverse of the mapping (see e.g. \cite{HLP2022, heimann2025discretizationerroranalysishigh}), and bilinear form $B_h$ and linear form $f_h$ are given as in \Cref{eq_def_Bh}, \Cref{eq_def_fh}, but on the changed higher order discrete domains. Furthermore, the Ghost penalty would be reformulated on curved patches, involving a changed jump operator (see \cite{heimann2025discretizationerroranalysishigh, heimann20} for details).

 With these changes in underlying definitions, the discrete problem for the curved meshes / isoparametric method reads the same as \Cref{eq_discrete_problem}.
\section{Discrete Stability and Continuity analysis} \label{sect_stab_analysis}
In this section, we provide a stability analysis of the formulation \Cref{eq_discrete_problem}, showing inf-sup stability in particular. This implies the unique solvability of the discrete problem.

To this purpose, we introduce the following discretisation related norms
\begin{align}
 \tripplejump{u}^2 &:= \sum_{n=1}^{N-1} (\jump{u}^n, \jump{u}^n)_{\Omega^{h}(t_n)} + (u_+^0, u_+^0)_{\Omega^{h}(0)} + (u_-^N, u_-^N)_{\Omega^{h}(T)} \\
 \tripplenorm{u}^2 &:= h \| \partial_t u + w \nabla u \|_{Q^h}^2 + \| u \|_{Q^{h}}^2 + \tripplejump{u}^2 \label{eq_def_tripplenorm}\\
 \tripplenorm{u}^2_j &:= \tripplenorm{u}^2 + J(u,u), \quad \tripplenorm{u}_J^2 := J(u,u).
\end{align}
In the rigorous mathematical analysis, we are interested in the limit behaviour of the method for small mesh sizes $h$ and time steps $\Delta t$. In particular, we are interested in simultaneous refinements. We formally assume
\begin{assumption}[Refinement in $h$ and $\Delta t$] \label{assume_h_delta_t_equiv}
 We assume $h$ and $\Delta t$ to be equivalent up to a constant, i.e.
 \begin{equation}
  h \lesssim \Delta t \quad \textnormal{ and } \quad \Delta t \lesssim h.
 \end{equation}
\end{assumption}
\begin{remark}[Comparison to problem-specific norm in Convection-Diffusion discretisations]
 We compare the definition of the problem-specific norm $\tripplenorm{\cdot}$ in this article with the corresponding convection-diffusion analysis presented in \cite{heimann2025discretizationerroranalysishigh, FH_PHD_2025, preuss18}. First, the jump summands $\tripplejump{\cdot}$ are identical. Next, we have added a volumetric $L^2$ summand $\| u \|_{Q^{h,n}}^2$ for the transport equation case. This is to balance contributions from $\mathrm{div}(\vec{w}) \neq 0$, at the cost of a constant scaling with $e^{-\alpha T}$ in the stability result. Moreover, this construction allows us to directly control boundary error terms, where in the case of e.g. \cite{FH_PHD_2025} a more involved trace inequality, using $K \cdot I(\cdot, \cdot)$ summands was needed. Finally, note that in relation to derivatives, we only aim at obtaining control in the material derivative norm in the transport equation, whilst the convection-diffusion analysis was based on seperate full $\partial_t u$ and $\nabla u$ summands. In this context, Assumption \ref{assume_h_delta_t_equiv} also fits in well, yielding a uniform scaling with $\Delta t$ (or equivalently: $h$), where in the convection-diffusion case allowing for more general refinements, each of the summands $\partial_t u$ and $\nabla u$ was scaled with $\Delta t$ or $h$ respectively.
\end{remark}
\begin{remark}[Scaling of material derivate in \Cref{eq_def_tripplenorm}]
 In \Cref{eq_def_tripplenorm}, we have opted for a scaling of the material derivative summand of $h$. In the light of Ass. \ref{assume_h_delta_t_equiv}, let us mention that this choice would be equivalent to $\Delta t$ or combinations such as $ t+  \frac{h}{\| \vec{w} \|_\infty}$, $\max \{ t, \frac{h}{\| \vec{w} \|_\infty} \}$. Depending on different viewpoints any of these might be regarded most natural.
\end{remark}
\subsection{Discrete Projection Operators}
The stability argument we want to put forward involves, for a given discrete function $u \in W_h$, a rescaled version of the space-time discrete function by a smooth scalar function in time, and derivative operators applied to $u$.\footnote{Details of this can be found below in the proof of \Cref{thm_stability}.} Both of these constructions will not yield a discrete function by themselves. The purpose of the following operators is to remedy this by supplying an appropriate discrete function, together with bounds on the difference incurred.

We start with the case of rescaling of the function $u \in W_h$ by a scalar smooth function $\varphi$ in time. To yield an appropriate discrete function, we want to apply an $L^2$ interpolation on each time slice $I_n$ and summarise well-known properties of this operator as follows.
\begin{lemma}
 For each $I_n$, there exists an interpolation operator $\Pi^{k_t,n}\colon V_h^{k_s}\times L^2(I_n) \to W_{\text{cut}}^{n}(\mathcal{E}(Q^{\text{lin},n}))$, the $L^2$ interpolation, 
 so that
 \begin{align}
  \| \Pi^{k_t,n} u \|_{\mathcal{E}(Q^{\text{lin},n})} &\lesssim \| u\|_{\mathcal{E}(Q^{\text{lin},n})}, \quad \|\partial_t \Pi^{k_t,n} u \|_{\mathcal{E}(Q^{\text{lin},n})} \lesssim \| \partial_t u\|_{\mathcal{E}(Q^{\text{lin},n})}. 
 \end{align}
 Furthermore, $\Pi^{k_t,n}$ commutes with the spatial derivative, i.e. $\forall u \in V_h^{k_s}\times L^2(I_n)$,
 \begin{equation}
  \nabla ( \Pi^{k_t,n} u ) = \Pi^{k_t,n} (\nabla u). \label{eq_Pikt_nabla_commute}
 \end{equation}
\end{lemma}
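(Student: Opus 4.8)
The plan is to take $\Pi^{k_t,n}$ to be a projection in the time variable alone. Identify $W_{\text{cut}}^{n}(\mathcal{E}(Q^{\text{lin},n}))$ with a subspace of $V_h^{k_s}\otimes\mathcal{P}^{k_t}(I_n)$, write a generic $u\in V_h^{k_s}\times L^2(I_n)$ in the spatial nodal basis as $u(x,t)=\sum_i c_i(t)\,\varphi_i(x)$ with $c_i\in L^2(I_n)$, and set $\Pi^{k_t,n}u:=\sum_i (\pi_t c_i)(t)\,\varphi_i(x)$, where $\pi_t\colon L^2(I_n)\to\mathcal{P}^{k_t}(I_n)$ is the $L^2(I_n)$-orthogonal projection; equivalently $\Pi^{k_t,n}=\mathrm{id}_x\otimes\pi_t$. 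First I would record the structural facts: the time component of $\Pi^{k_t,n}u$ lies in $\mathcal{P}^{k_t}(I_n)$ and the spatial component is untouched, so $\Pi^{k_t,n}u\in W_h^{n,k}$; and since $\pi_t$ acts coefficient-by-coefficient in time and sends the zero coefficient to zero, the pattern of vanishing spatial dofs is preserved, so $\Pi^{k_t,n}$ leaves $W_{\text{cut}}^{n}(\mathcal{E}(Q^{\text{lin},n}))$ invariant. That is the only mapping property actually needed, since for $u\in W_h$ the rescaled functions $\varphi(t)\,u$ appearing in the stability proof have precisely this spatial dof structure. The commutation with $\nabla$ is then immediate, as $\nabla$ differentiates only the $\varphi_i$ while $\pi_t$ touches only the $c_i$, giving $\nabla(\Pi^{k_t,n}u)=\sum_i(\pi_t c_i)(t)\,\nabla\varphi_i(x)=\Pi^{k_t,n}(\nabla u)$, where on the right $\Pi^{k_t,n}$ acts componentwise on the (discontinuous in space) polynomial space containing $\nabla u$.

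For the $L^2$ bound I would exploit that $\mathcal{E}(Q^{\text{lin},n})$ is a genuine space-time product, $\mathcal{E}(Q^{\text{lin},n})=R_n\times I_n$ with $R_n=\bigcup\{T\in\mathcal{T}_h : (T\times I_n)\cap Q^{\text{lin},n}\neq\varnothing\}$, so by Fubini and the fact that $\pi_t$ is a contraction on $L^2(I_n)$,
\begin{align*}
 \|\Pi^{k_t,n}u\|_{\mathcal{E}(Q^{\text{lin},n})}^2 &= \int_{R_n}\|\pi_t(u(x,\cdot))\|_{L^2(I_n)}^2\,\mathrm{d}x \\
 &\le \int_{R_n}\|u(x,\cdot)\|_{L^2(I_n)}^2\,\mathrm{d}x = \|u\|_{\mathcal{E}(Q^{\text{lin},n})}^2,
\end{align*}
in fact with constant one.

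The only estimate that takes a genuine (if short) argument is the bound on $\partial_t\Pi^{k_t,n}u$, and this is where I expect to spend the effort. Because $\pi_t$ commutes with affine reparametrisation of the interval, it suffices to prove on the reference interval that $\|\partial_{\hat{t}}\hat{\pi}\hat{v}\|_{L^2(0,1)}\lesssim\|\partial_{\hat{t}}\hat{v}\|_{L^2(0,1)}$ with a constant depending only on $k_t$, and then transfer to $I_n$ via $\partial_t=\Delta t^{-1}\partial_{\hat{t}}$ and $\mathrm{d}t=\Delta t\,\mathrm{d}\hat{t}$, which multiply both sides of the squared inequality by the same factor $\Delta t^{-1}$, so no negative power of $\Delta t$ survives. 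On $(0,1)$: since $\hat{\pi}$ reproduces constants, $\hat{\pi}\hat{v}-\overline{\hat{v}}=\hat{\pi}(\hat{v}-\overline{\hat{v}})$ with $\overline{\hat{v}}$ the mean of $\hat{v}$; the inverse inequality on the finite-dimensional space $\mathcal{P}^{k_t}(0,1)$ gives $\|\partial_{\hat{t}}\hat{\pi}(\hat{v}-\overline{\hat{v}})\|_{L^2(0,1)}\lesssim\|\hat{\pi}(\hat{v}-\overline{\hat{v}})\|_{L^2(0,1)}\le\|\hat{v}-\overline{\hat{v}}\|_{L^2(0,1)}$, and the Poincar\'e--Wirtinger inequality gives $\|\hat{v}-\overline{\hat{v}}\|_{L^2(0,1)}\lesssim\|\partial_{\hat{t}}\hat{v}\|_{L^2(0,1)}$. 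Integrating the resulting pointwise-in-$x$ bound over $R_n$ yields $\|\partial_t\Pi^{k_t,n}u\|_{\mathcal{E}(Q^{\text{lin},n})}\lesssim\|\partial_t u\|_{\mathcal{E}(Q^{\text{lin},n})}$ with a $\Delta t$-independent constant.

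In summary, the construction is the time-only $L^2$ projection, the commutation and range properties are structural, and the two norm bounds reduce to Fubini on the product domain $\mathcal{E}(Q^{\text{lin},n})=R_n\times I_n$ together with one scale-invariance check for the time derivative. I do not anticipate a serious obstacle; the only point requiring care is keeping the $\Delta t$ scaling consistent so that the time-derivative constant does not degenerate as $\Delta t\to 0$.
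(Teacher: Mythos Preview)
Your proposal is correct. The construction $\Pi^{k_t,n}=\mathrm{id}_x\otimes\pi_t$ is exactly the intended operator, and your arguments for the $L^2$ contraction (Fubini on the product domain $R_n\times I_n$), for the commutation with $\nabla$ (tensor structure), and for the $\partial_t$-stability (reduction to the reference interval via affine scaling, then inverse inequality plus Poincar\'e--Wirtinger after subtracting the mean) are all sound; in particular your scaling check correctly shows that no negative power of $\Delta t$ appears.

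The paper itself does not give a proof at all: it simply refers the reader to \cite[Lemma~3.15]{preuss18} and \cite[Appendix~A.1]{burman_preuss_25}. Your write-up therefore supplies the details that the paper outsources. The only minor point worth tightening is that the $\partial_t$-bound tacitly assumes $u(x,\cdot)\in H^1(I_n)$ so that Poincar\'e--Wirtinger applies; this is implicit in the statement (otherwise the right-hand side is not finite) but you might say so explicitly.
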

For a detailed proof of these results, we refer the reader to \cite[Lemma 3.15]{preuss18} / \cite[Appendix A.1]{burman_preuss_25}.

We will apply a superconvergence result, which yields a special deviation bound for our argument class of products between a discrete function and a smooth function $\varphi(x,t) = \varphi(t)$. The relevant property is called discrete commutator property. Exploiting the tensor-product structure of $\mathcal{E}(Q^{\text{lin},n})$, we can apply the results from e.g. \cite{BERTOLUZZA19991097} for a fixed point in space on the respective time element.
\begin{lemma}
 Let $\varphi \in C^{\infty}([0,T]), u \in W_{\text{cut}}^{n}(\mathcal{E}(Q^{\text{lin},n}))$, then
 \begin{align}
  \| \varphi u - \Pi^{k_t,n}( \varphi u)\|_{\mathcal{E}(Q^{\text{lin},n})} &\leq C(\varphi) \cdot \Delta t \cdot \| u \|_{\mathcal{E}(Q^{\text{lin},n})}, \label{eq_superinterpol} \\
  \| \partial_t (\varphi u - \Pi^{k_t,n}( \varphi u))\|_{\mathcal{E}(Q^{\text{lin},n})} &\leq C(\varphi) \cdot \Delta t \cdot \| \partial_t u \|_{\mathcal{E}(Q^{\text{lin},n})}. \label{eq_superinterpol_dt}
 \end{align}
\end{lemma}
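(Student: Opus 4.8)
The plan is to reduce the space-time statement to a purely one-dimensional polynomial approximation estimate on the reference time interval, exploiting the tensor-product structure $W_{\text{cut}}^n(\mathcal{E}(Q^{\text{lin},n})) \subseteq V_h^{k_s} \otimes \mathcal{P}^{k_t}(I_n)$ together with the fact that $\Pi^{k_t,n}$ acts only in the time variable. Fix a point $x \in \tilde\Omega$ (more precisely, take a basis of $V_h^{k_s}$ and work coefficientwise in space). For such $x$, the function $t \mapsto u(x,t)$ lies in $\mathcal{P}^{k_t}(I_n)$, so the quantity $\varphi u - \Pi^{k_t,n}(\varphi u)$ at that fixed $x$ is exactly the $L^2(I_n)$-projection error of the product of a smooth function $\varphi$ with a polynomial of degree $k_t$. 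The discrete commutator property of \cite{BERTOLUZZA19991097} applied on $I_n$ then gives, for each fixed $x$,
\begin{equation*}
 \| \varphi u(x,\cdot) - \Pi^{k_t,n}(\varphi u)(x,\cdot) \|_{L^2(I_n)} \le C(\varphi)\,\Delta t\, \| u(x,\cdot) \|_{L^2(I_n)},
\end{equation*}
and the analogous bound with $\partial_t$ on both sides, where the constant depends on $\varphi$ through (a finite number of) its derivatives but is independent of $\Delta t$ and of the polynomial $u(x,\cdot)$. The key point is that one loses no power of $\Delta t$ when differentiating in time on the right-hand side of \Cref{eq_superinterpol_dt} — this is precisely the superconvergence (discrete commutator) phenomenon, and it is what distinguishes this estimate from a naive application of the triangle inequality plus a standard interpolation bound, which would only give an $L^2$ bound on $\partial_t u$ after an inverse estimate.

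Having the pointwise-in-$x$ bounds, I would then integrate over the space-time cut region. Since $\Pi^{k_t,n}$ commutes with the spatial structure and the region $\mathcal{E}(Q^{\text{lin},n}) = \{ (x,t) : x \in \mathcal{E}(\Omega^{\text{lin},n}) \} $ restricted to $I_n$ has a tensor-product form $\omega_n \times I_n$ for the relevant union of background elements $\omega_n$, Fubini gives
\begin{equation*}
 \| \varphi u - \Pi^{k_t,n}(\varphi u) \|_{\mathcal{E}(Q^{\text{lin},n})}^2 = \int_{\omega_n} \| \varphi u(x,\cdot) - \Pi^{k_t,n}(\varphi u)(x,\cdot) \|_{L^2(I_n)}^2 \, \mathrm{d}x \le C(\varphi)^2 \Delta t^2 \int_{\omega_n} \| u(x,\cdot) \|_{L^2(I_n)}^2 \, \mathrm{d}x,
\end{equation*}
and the right-hand side is $C(\varphi)^2 \Delta t^2 \| u \|_{\mathcal{E}(Q^{\text{lin},n})}^2$, which is \Cref{eq_superinterpol}. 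The estimate \Cref{eq_superinterpol_dt} follows in exactly the same way from the $\partial_t$-version of the one-dimensional bound, using that $\partial_t$ commutes with the spatial variable and with $\Pi^{k_t,n}$ in the sense needed.

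The main obstacle — or rather, the point that needs care rather than difficulty — is the bookkeeping in the reduction to one dimension: one must verify that the relevant cut region on the time slice $I_n$ genuinely has the product structure $\omega_n \times I_n$ (true, because $\mathcal{E}(Q^{\text{lin},n})$ is a union of full space-time prisms $T \times I_n$), and that $\Pi^{k_t,n}$ as constructed in the previous lemma really is the tensorisation of the one-dimensional $L^2$-projection on $I_n$ with the identity in space, so that evaluating at a fixed $x$ and projecting commute. Once that is in place, the estimate is a direct consequence of the cited one-dimensional discrete commutator result. A secondary point is to make precise the dependence of $C(\varphi)$: it should be controlled by $\| \varphi \|_{W^{k_t+1,\infty}(I_n)}$ (or a similar norm), uniformly in $n$ since $\varphi \in C^\infty([0,T])$, which is all that is needed for the later stability argument.
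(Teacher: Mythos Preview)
Your proposal is correct and follows essentially the same approach as the paper: the paper's proof consists of the single remark that one exploits the tensor-product structure of $\mathcal{E}(Q^{\text{lin},n})$ and applies the discrete commutator result of \cite{BERTOLUZZA19991097} at a fixed spatial point on the respective time element. Your write-up is a faithful elaboration of exactly that idea, with the Fubini step and the verification of the product structure made explicit.
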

A global variant of this operator is defined as each $\Pi^{k_t,n}$ on the respective $I_n$.
\begin{definition}
 The operator $\Pi^{k_t}$ is defined for arguments $u\in V_h^{k_s} \times L^2([0,T])$ to yield a discrete function $\Pi^{k_t} u \in W_h$ defined by
 \begin{equation}
  (\Pi^{k_t} u )|_{\tilde \Omega \times I_n} := \Pi^{k_t,n} (u|_{\tilde \Omega \times I_n}) \quad \forall n
 \end{equation}
\end{definition}

Next, we introduce a further operator, $\Pi_{Osw}$, which maps spatially discontinuous Galerkin discrete functions on $\mathcal{E}(Q^{\text{lin},n})$ to corresponding continuous Galerkin functions. It is called Oswald operator in the literature and relies on the idea of averaging out all contributions from degrees of freedom associated to more than one (neighbouring) elements with equal weights. In the notation introduced above, it will map spatial functions from $ V_{DG}^{k_s}$ to $ V_{h}^{k_s}$, or the respective temporal tensor products of these spaces.
It holds
\begin{lemma}
 For each $I_n$, there exists an interpolation operator $\Pi_{Osw}^n$, so that for each $u \in V_{DG}^{k_s}(\mathcal{E}(\Omega^{\text{lin},n})) \times L^2(I_n)$ we obtain a $\Pi_{Osw}^n (u) \in V_h^{k_s}(\mathcal{E}(\Omega^{\text{lin},n})) \times L^2(I_n)$ such that
 \begin{equation}
  \| u - \Pi_{Osw}^n u \|_{\mathcal{E}(Q^{\text{lin},n})}^2 \lesssim \int_{I_n} \sum_{F \in \mathcal{F}_n} h_F \tripplejump{u}_F^2
 \end{equation}
\end{lemma}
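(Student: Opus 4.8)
The plan is to build $\Pi_{Osw}^n$ from the classical spatial Oswald (nodal averaging) operator applied pointwise in time, exploiting that $\mathcal{E}(Q^{\text{lin},n}) = \mathcal{E}(\Omega^{\text{lin},n}) \times I_n$ is a union of \emph{uncut}, shape-regular background elements, so that no small-cut phenomenon enters and all constants may be made to depend only on $k_s$, $d$ and the shape-regularity of $\mathcal{T}_h$. First I would fix a Lagrange basis of $V_h^{k_s}$ on the active background mesh, and for a fixed time $t$ define $\Pi_{Osw}$ applied to $u(\cdot,t) \in V_{DG}^{k_s}$ by assigning to each Lagrange node $x_i$ the arithmetic mean of the element-wise values $u|_T(x_i)$ over the \emph{active} elements $T$ containing $x_i$ (nodes interior to a single element keep their value); the resulting function is continuous, hence lies in $V_h^{k_s}$. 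Setting $(\Pi_{Osw}^n u)(\cdot, t) := \Pi_{Osw}(u(\cdot,t))$ for a.e.\ $t \in I_n$ then defines the space-time operator, with measurability in $t$ inherited from the linearity and boundedness of the nodal functionals.

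Next I would establish the local estimate. On a fixed active element $T$ and fixed $t$, the polynomial $(u - \Pi_{Osw}^n u)|_T$ is determined by its nodal differences $u|_T(x_i) - (\Pi_{Osw}^n u)(x_i)$, each of which is a convex combination of differences $u|_T(x_i) - u|_{T'}(x_i)$ over active elements $T'$ sharing the node $x_i$. Chaining through a facet-connected sequence of active elements around $x_i$, each such difference is a sum of boundedly many jump values $[u]_F(x_i)$ over interior facets $F$ in the vertex patch. Two norm-equivalence-plus-scaling steps --- one on $\mathcal{P}^{k_s}(T)$ and one on $\mathcal{P}^{k_s}(F)$, using $h_T \simeq h_F$ from shape-regularity --- then give
\[
 \| u - \Pi_{Osw}^n u \|_{L^2(T)}^2 \;\lesssim\; h_T^d \sum_i |u|_T(x_i) - (\Pi_{Osw}^n u)(x_i)|^2 \;\lesssim\; \sum_{F} h_F \, \tripplejump{u}_F^2 ,
\]
where the last sum runs over the interior facets $F \in \mathcal{F}_n$ belonging to the patches of the Lagrange nodes of $T$. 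Summing over all active elements, using that each $F \in \mathcal{F}_n$ lies in only a uniformly bounded number of such patches, and integrating over $t \in I_n$ by Fubini, yields the claimed bound.

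The main obstacle is the combinatorial bookkeeping hidden in the chaining argument: one has to ensure that any two active elements sharing a Lagrange node can be joined by a path of active elements in which consecutive ones share a facet \emph{belonging to} $\mathcal{F}_n$, so that the nodal differences are genuinely controlled by the jump terms appearing on the right-hand side and not by jumps across facets on $\partial\big(\mathcal{E}(\Omega^{\text{lin},n})\big)$, where $u$ has no neighbour. For active regions induced by a level-set function this property holds once $h$ is small enough under the standing geometric assumptions, but it is the point requiring care; everything else is a routine finite-dimensional-norm-equivalence and scaling computation of the type found in \cite{burman2010ghost} and the references therein on the Oswald operator.
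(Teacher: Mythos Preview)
Your proposal is correct and follows exactly the standard construction and estimate for the Oswald (nodal averaging) operator; the paper itself does not give a proof but simply refers to \cite{burman2007continuous}, where precisely this argument is carried out. Your identification of the facet-connectivity of active vertex patches as the one non-routine point is apt, though in the present setting the paper tacitly assumes the usual geometric regularity that guarantees it.
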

A proof of this property was given in \cite{burman2007continuous}.

Again, we define a temporally global variant
\begin{definition}
 The operator $\Pi_{Osw}$ is defined for arguments $u\in W_{\text{DG}}^{n,k}, n=1,\dots, N$ to yield a discrete function $\Pi_{Osw} u \in W_h^{n,k}, n=1,\dots, N$ defined by
 \begin{equation}
  (\Pi_{Osw} u )|_{\tilde \Omega \times I_n} := \Pi_{Osw}^n (u|_{\tilde \Omega \times I_n}) \quad \forall n
 \end{equation}
\end{definition}

We can relate the facet jump appearing in the upper bound as follows to the Ghost penalty:\footnote{Note that at this occasion, the dual character of the Ghost penalty becomes relevant, as the set of facets used in e.g. \cite{FH_PHD_2025} would not suffice.}
\begin{lemma}
 It holds for $u\in W_{\text{DG}}^{n,k}, n=1,\dots,N$
 \begin{equation}
  \sum_n \int_{I_n} \sum_{F \in \mathcal{F}_n} h_F \tripplejump{u}_F^2 \lesssim h^{-j} J(u,u)
 \end{equation}
\end{lemma}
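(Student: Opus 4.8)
The plan is to reduce the statement to a local, pointwise-in-time ghost-penalty norm equivalence on a single facet patch, and then to integrate in time and sum over facets. Observe first that, by the definition of $j_F^n$ and the choice $j=-1$,
\begin{equation*}
 j_F^n(u,u) \;=\; \gamma_J\, h^{-1}\! \int_{I_n}\! \|\jump{u}_{\omega_F}\|_{L^2(\omega_F)}^2 \,\mathrm{d}t ,
 \qquad J(u,u) \;=\; \sum_{n=1}^N \sum_{F\in\mathcal{F}^n} j_F^n(u,u).
\end{equation*}
Moreover the facet set $\mathcal{F}_n$ entering the Oswald estimate (the interior facets of $\mathcal{E}(Q^{\mathrm{lin},n})$) is contained in $\mathcal{F}^n$ — indeed the two sets coincide, since both consist of the facets $F=T_1\cap T_2$ with $T_1\neq T_2$ and $T_1\times I_n,\,T_2\times I_n\subseteq\mathcal{E}(Q^{\mathrm{lin},n})$. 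This is precisely the dual role of the ghost penalty referred to above: its facet set is large enough to contain every interior facet of the extended domain, not only those meeting the cut boundary.

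Next, fix $n$, a facet $F\in\mathcal{F}_n$ and a time $t\in I_n$. The patch $\omega_F=T_1\cup T_2$ is the union of two uncut, shape-regular mesh cells, and $u(\cdot,t)|_{\omega_F}$ is a pair of polynomials of degree $k_s$. On the fixed reference two-cell patch, the map
\begin{equation*}
 \hat u \;\longmapsto\; \bigl(\,\|\hat u_1 - \hat{\mathcal{E}}^p\hat u_2\|_{L^2(\hat T_1)}^2 + \|\hat u_2 - \hat{\mathcal{E}}^p\hat u_1\|_{L^2(\hat T_2)}^2\,\bigr)^{1/2}
\end{equation*}
is a seminorm on $\mathcal{P}^{k_s}(\hat T_1)\times\mathcal{P}^{k_s}(\hat T_2)$ whose kernel consists precisely of the pairs that are restrictions of one and the same polynomial; on that kernel the facet jump $[\hat u]_{\hat F}$ vanishes as well. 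Equivalence of seminorms on a finite-dimensional space sharing the same kernel, followed by an affine scaling back to $\omega_F$ — which contributes one power of $h$ from the mismatch between the $(d-1)$-dimensional facet measure and the $d$-dimensional cell measure — yields
\begin{equation*}
 h\,\|[u(\cdot,t)]\|_{L^2(F)}^2 \;\lesssim\; \|\jump{u(\cdot,t)}_{\omega_F}\|_{L^2(\omega_F)}^2 ,
\end{equation*}
with a constant depending only on $k_s$ and the shape regularity of $\Th$, in particular independent of $t$, of $h$, and of the cut configuration. This is the classical ghost-penalty inequality, cf.\ \cite{burman2010ghost, preuss18, heimann20}.

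Assembling is then routine. Using $h_F\simeq h$, integrating the last inequality over $t\in I_n$, summing over $F\in\mathcal{F}_n=\mathcal{F}^n$ and over $n$, and invoking the identities for $j_F^n$ and $J$ above,
\begin{align*}
 \sum_n \int_{I_n}\sum_{F\in\mathcal{F}_n} h_F\,\tripplejump{u}_F^2
 &\;\lesssim\; \sum_{n=1}^N \sum_{F\in\mathcal{F}^n} \int_{I_n} \|\jump{u}_{\omega_F}\|_{L^2(\omega_F)}^2 \,\mathrm{d}t \\
 &\;=\; \gamma_J^{-1}\, h \sum_{n=1}^N \sum_{F\in\mathcal{F}^n} j_F^n(u,u)
 \;=\; \gamma_J^{-1}\, h\, J(u,u) \;\lesssim\; h\, J(u,u) \;=\; h^{-j} J(u,u) ,
\end{align*}
where in the last $\lesssim$ we used that $\gamma_J>0$ is a fixed parameter. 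The only genuinely non-routine ingredient is the local norm equivalence of the preceding paragraph; note that only the ``easy'' direction is needed (the volumetric patch jump dominates the facet jump), that it concerns only the uncut two-cell patch so that no small-cut degeneracy enters, and that the constant is uniform in time because the spatial mesh is fixed and $u(\cdot,t)$ lives in the same finite-dimensional space for every $t\in I_n$.
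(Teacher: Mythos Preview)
Your proof is correct and shares the paper's overall strategy: establish the local, pointwise-in-time inequality $h\,\|[u(\cdot,t)]\|_{L^2(F)}^2 \lesssim \|\jump{u(\cdot,t)}_{\omega_F}\|_{L^2(\omega_F)}^2$ on a single facet patch, then integrate over $I_n$ and sum over facets and time slices. The difference lies only in how that local inequality is obtained. The paper argues directly: with $u_i := u|_{T_i}$ it writes $h_F\,\|[u]\|_F^2 \le h\,\|u_1-u_2\|_{\partial T_i}^2$ (since $F\subset\partial T_i$) and then applies a standard discrete trace inequality for polynomials, $h\,\|p\|_{\partial T_i}^2 \lesssim \|p\|_{T_i}^2$, to $p = \mathcal{E}^p(u_1-u_2)$, which is identified with one half of the patch jump $\jump{u}_{\omega_F}$. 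Your route instead uses the abstract equivalence of seminorms on the finite-dimensional two-cell reference space, noting that the kernel of the patch-jump seminorm (pairs that are restrictions of one global polynomial) is contained in the kernel of the facet-jump seminorm, and then scales back; you are careful to observe that only this one-sided kernel inclusion is needed. Both arguments are textbook; the paper's is a touch more explicit about where the constant comes from (a trace inequality), while yours makes the dependence on shape regularity and the independence from the cut configuration more transparent through the reference-patch scaling.
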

\begin{proof}
 Fix a time slice $I_n$ and a facet $F \in \mathcal{F}_n$. Let $T_1$ and $T_2$ be given such that $F = \overline{T}_1 \cap \overline{T}_2$, and denote $u_1 := u|_{T_1}$, $u_2 := u|_{T_2}$. Then, we can argue
 \begin{equation}
  h_F \tripplejump{u}_F^2 = h_F \|u_1 - u_2\|_F^2 \leq h \| u_1 - u_2 \|_{\partial T_i}^2 \lesssim \| \mathcal{E}^p (u_1 - u_2) \|_{T_i}^2 \forall  i=1,2,
 \end{equation}
 where in the last step we applied a standard trace inequality such as \cite[Eq. (3.23)]{Burman_Hansbo_Larson_Zahedi_2025}. We can identify this term with the direct Ghost penalty jump and multiply with $h^{-j}$ to cancel out the scaling.
\end{proof}

This implies for $j= -1$ in particular
\begin{corollary}
 It holds
 \begin{equation}
  \| u - \Pi_{Osw} u \|_{\mathcal{E}(Q^h)}^2 \lesssim h J(u,u). \label{eq_Oswald_detail}
 \end{equation}
\end{corollary}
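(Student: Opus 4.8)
The plan is to chain together the two immediately preceding lemmas and then substitute the concrete choice $j=-1$. First I would invoke the Oswald interpolation estimate slice by slice: for each $n$, the lemma on $\Pi_{Osw}^n$ gives
\begin{equation*}
 \| u - \Pi_{Osw}^n u \|_{\mathcal{E}(Q^{\text{lin},n})}^2 \lesssim \int_{I_n} \sum_{F \in \mathcal{F}_n} h_F \tripplejump{u}_F^2.
\end{equation*}
Since $\Pi_{Osw}$ is defined as $\Pi_{Osw}^n$ on each time slice and the domains $\mathcal{E}(Q^{h,n}) = \mathcal{E}(Q^{\text{lin},n})$ are disjoint in time (up to time-slice boundaries of measure zero), summing over $n=1,\dots,N$ yields
\begin{equation*}
 \| u - \Pi_{Osw} u \|_{\mathcal{E}(Q^h)}^2 = \sum_{n=1}^N \| u - \Pi_{Osw}^n u \|_{\mathcal{E}(Q^{\text{lin},n})}^2 \lesssim \sum_{n=1}^N \int_{I_n} \sum_{F \in \mathcal{F}_n} h_F \tripplejump{u}_F^2.
\end{equation*}

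Next I would apply the lemma relating the summed facet jumps to the Ghost penalty, which states that the right-hand side above is $\lesssim h^{-j} J(u,u)$. Combining the two displays gives $\| u - \Pi_{Osw} u \|_{\mathcal{E}(Q^h)}^2 \lesssim h^{-j} J(u,u)$. Finally, inserting the choice $j = -1$ made for this article (so that $h^{-j} = h$) produces exactly the claimed bound $\| u - \Pi_{Osw} u \|_{\mathcal{E}(Q^h)}^2 \lesssim h J(u,u)$.

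There is essentially no obstacle here: the statement is a direct corollary obtained by composing two already-established estimates and plugging in the scaling parameter. The only point requiring a moment's care is ensuring that the temporal-global operator $\Pi_{Osw}$ and the global norm on $\mathcal{E}(Q^h)$ genuinely decouple as a sum over the per-slice contributions, which is immediate from the definitions of $W_h$, $\Pi_{Osw}$, and $\mathcal{E}(Q^h) = \bigcup_n \mathcal{E}(Q^{h,n})$ together with the discontinuous-in-time structure. No new trace inequalities, geometry estimates, or constants beyond those already controlled in the preceding lemmas are needed.
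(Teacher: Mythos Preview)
Your proposal is correct and matches the paper's approach exactly: the paper introduces the corollary with the single line ``This implies for $j=-1$ in particular'' and gives no further proof, so the intended argument is precisely the chaining of the two preceding lemmas (Oswald estimate per slice, then the facet-jump-to-Ghost-penalty bound) followed by substituting $j=-1$, which is what you wrote out in detail.
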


\subsection{Ghost-penalty related estimates} The numerical analysis relies on discrete inequalities, which in the cut case are facilitated by the Ghost penalty stabilisation. \cite{burman15, Burman_Hansbo_Larson_Zahedi_2025} As it relates to the time-dependent direct Ghost-penalty, in the works \cite{preuss18, heimann20, heimann2025discretizationerroranalysishigh} such discrete inequalities were established in relation to the scaling $j=-2$, which was applied in the case with diffusion. We present the according results for the general scaling $j$ in the following, in order to apply them to our analysis involving $j=-1$ afterwards.\footnote{We mention in passing that we switched from bounds over interior elements, as in \cite{preuss18, heimann2025discretizationerroranalysishigh}, to upper bounds containing the physical domains, matching the application of these estimates later.}

\begin{lemma} \label{lemma_gp_E_IpGP}
 It holds for every $u \in W_h$,
\begin{align}
 \|u\|^2_{\mathcal{E}(Q^{h,n})} &\lesssim h^{-j} j^n_h(u,u) + \| u \|_{Q^{h,n}}^2. \label{eq1_gp_E_IpGP} 
\end{align}
\end{lemma}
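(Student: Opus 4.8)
The plan is to show that the $L^2$-norm of a discrete function $u$ over the extended patch domain $\mathcal{E}(Q^{h,n})$ is controlled by its $L^2$-norm over the physical domain $Q^{h,n}$ plus the Ghost-penalty contribution on the time slice, with the correct power of $h$ appearing. The mechanism is the standard Ghost-penalty argument: the stabilisation term allows one to ``hop'' from an element that is well inside the physical domain to a neighbouring element that may be badly cut, paying only a controlled price each time one crosses a facet.

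First I would set up the combinatorial backbone. Fix the time slice $I_n$. For each element $T$ with $T\times I_n\subseteq\mathcal{E}(Q^{\text{lin},n})$, either $T\times I_n$ has a substantial intersection with $Q^{h,n}$ (``good'' elements), in which case $\|u\|_{T\times I_n}$ is already controlled by $\|u\|_{Q^{h,n}}$ via a local inverse/norm-equivalence estimate on the polynomial $u|_{T}$ (using shape-regularity and that the cut fraction is bounded below), or $T$ is reachable from a good element by a bounded-length chain of facet-connected elements all lying inside $\mathcal{E}(Q^{\text{lin},n})$. This chain property is the usual assumption in Ghost-penalty analysis (``fat intersection''/mesh-connectivity assumption implicit in the setup); the number of hops is uniformly bounded because $Q^{\text{lin},n}$ contains at least one full element per connected component at the relevant length scale and the patch is only one layer of elements thick around it.

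Next, for a single hop across a facet $F=\overline{T_1}\cap\overline{T_2}$ with $F\in\mathcal{F}^n$, I would write $u|_{T_2} = u|_{T_1}$ (extended polynomially) $+ \jump{u}_{\omega_F}|_{T_2}$ and hence, by the triangle inequality and a local inverse inequality relating $\|\mathcal{E}^p(u|_{T_1})\|_{T_2}$ to $\|u|_{T_1}\|_{T_1}$ (again shape-regularity), obtain $\|u\|_{T_2\times I_n}^2 \lesssim \|u\|_{T_1\times I_n}^2 + \|\jump{u}_{\omega_F}\|_{T_i\times I_n}^2$. The last term is exactly $h^{-j}$ times the integrand of $j_F^n(u,u)$, after inserting the $\gamma_J h^j$ weight, so summing over the bounded chain for each $T$ and then over all $T\times I_n\subseteq\mathcal{E}(Q^{\text{lin},n})$ gives $\|u\|_{\mathcal{E}(Q^{h,n})}^2 \lesssim \|u\|_{Q^{h,n}}^2 + h^{-j}\sum_{F\in\mathcal{F}^n} j_F^n(u,u) = \|u\|_{Q^{h,n}}^2 + h^{-j} j_h^n(u,u)$, which is the claim; the temporal direction plays no role since everything is done pointwise in $t$ (or integrated trivially over $I_n$) thanks to the tensor-product structure.

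The main obstacle I anticipate is making the ``chain of bounded length'' argument clean: one needs that every element of the patch $\mathcal{E}(Q^{\text{lin},n})$ can be joined to a full-in-$Q^{h,n}$ element through facets that are themselves in $\mathcal{F}^n$ (i.e. both adjacent elements lie in the patch), and that the chain length is bounded uniformly in $h$. For the uncurved piecewise-linear geometry with a shape-regular background mesh this is standard, but it is exactly the point where the geometric assumptions on $\phi^{\text{lin}}$ (and the fact that the patch is the minimal layer of elements around $Q^{\text{lin},n}$) enter, and it should be invoked explicitly — most likely by citing the analogous arguments in \cite{preuss18, heimann2025discretizationerroranalysishigh, Burman_Hansbo_Larson_Zahedi_2025}, adapted here to bound the physical domain $Q^{h,n}$ on the right-hand side rather than an interior region.
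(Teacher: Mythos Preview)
Your proposal is correct and follows the standard Ghost-penalty chaining mechanism (good elements reached via bounded facet chains, each hop controlled by the patch jump, the $h^{-j}$ factor arising from the $h^j$ weight in $j_F^n$). The paper itself does not give a proof for this lemma but simply refers to \cite{preuss18, heimann2025discretizationerroranalysishigh} and remarks on the $h^j$ scaling; your sketch is precisely the argument carried out in those references, and you correctly anticipate that the geometric chain-connectivity assumption is the one nontrivial point that is handled by citation.
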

\begin{remark}
 As for the proof, we refer the reader to \cite{preuss18, heimann2025discretizationerroranalysishigh} and note that $j^n_h$ contains the scaling $h^j$ in our setting.
\end{remark}
\begin{lemma} \label{lemma_bound_tn_ag_temporal_vol}
For any $u \in W_h$, $n=1,\dots,N$, it holds
\begin{align}
 (u^{n-1}_+, u^{n-1}_+)_{\Omega^{h}(t_{n-1}^+)} + (u^n_-, u^n_-)_{\Omega^{h}(t_n^-)} &\lesssim \frac{1}{\Delta t} \|u\|_{\mathcal{E}(Q^{h, n})}^2 \nonumber \\
 &\lesssim \frac{1}{\Delta t} \left( h^{-j} j^n_h(u,u) + \|u\|_{Q^{h, n}}^2 \right) \label{eq_bound_tn_ag_temporal_vol}
\end{align}
\end{lemma}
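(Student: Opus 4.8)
The plan is to establish the first inequality by a standard scaling argument exploiting the tensor-product structure of the space-time slice, and then to obtain the second inequality by simply invoking \Cref{lemma_gp_E_IpGP}. For the first inequality, fix a time slice $I_n$ and recall that $u \in W_h$ restricted to $\mathcal{E}(Q^{h,n})$ is, elementwise in space, a polynomial of degree $k_t$ in time on the interval $I_n$ of length $\Delta t$. On a one-dimensional reference interval, all norms on the finite-dimensional space $\mathcal{P}^{k_t}$ are equivalent; in particular the point evaluations at the two endpoints are bounded by the $L^2$ norm over the interval. Rescaling from the reference interval to $I_n$ introduces a factor $\Delta t^{-1}$, so that for a fixed $x$, $|u(x,t_{n-1}^+)|^2 + |u(x,t_n^-)|^2 \lesssim \frac{1}{\Delta t}\int_{I_n} |u(x,t)|^2\,\mathrm{d}t$, with a constant depending only on $k_t$.

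Next I would integrate this pointwise inequality in space. The left-hand side of the claimed bound integrates $|u(\cdot,t_{n-1}^+)|^2$ over $\Omega^h(t_{n-1}^+)$ and $|u(\cdot,t_n^-)|^2$ over $\Omega^h(t_n^-)$; since both $\Omega^h(t_{n-1})$ and $\Omega^h(t_n)$ are contained in the spatial footprint of $\mathcal{E}(Q^{h,n})$, and likewise the time interval $I_n$ is contained in the temporal extent there, integrating the pointwise estimate over (a set containing) these spatial domains and over $I_n$ yields
\begin{equation*}
 (u^{n-1}_+, u^{n-1}_+)_{\Omega^{h}(t_{n-1}^+)} + (u^n_-, u^n_-)_{\Omega^{h}(t_n^-)} \lesssim \frac{1}{\Delta t}\|u\|_{\mathcal{E}(Q^{h,n})}^2,
\end{equation*}
which is the first inequality. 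The second inequality then follows immediately by applying \Cref{lemma_gp_E_IpGP}, i.e. $\|u\|_{\mathcal{E}(Q^{h,n})}^2 \lesssim h^{-j} j^n_h(u,u) + \|u\|_{Q^{h,n}}^2$, and multiplying through by $\frac{1}{\Delta t}$.

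The only subtle point — rather than a genuine obstacle — is making sure the spatial integration is justified when passing from the pointwise-in-$x$ estimate to the integrated one: one needs that $\Omega^h(t_{n-1})$ and $\Omega^h(t_n)$ sit inside the union of elements making up $\mathcal{E}(Q^{h,n})$, which holds by the very definition of $\mathcal{E}(Q^{\text{lin},n})$ as the collection of all background elements meeting $Q^{\text{lin},n}$, together with continuity in time of $\phi^{\text{lin}}$. Everything else is a one-dimensional polynomial norm-equivalence and a change of variables, so no delicate estimates are required.
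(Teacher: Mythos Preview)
Your argument is correct and is precisely the standard route: a one-dimensional polynomial inverse (trace) inequality in time applied pointwise in space, followed by integration over the spatial footprint of $\mathcal{E}(Q^{h,n})$, and then \Cref{lemma_gp_E_IpGP} for the second bound. The paper does not spell out a proof of this lemma itself but groups it with the other ghost-penalty estimates and refers to \cite{preuss18, heimann2025discretizationerroranalysishigh}; the argument given there is exactly the one you outline.
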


\begin{lemma} \label{lemma_temporal_and_spatial_inverse_ineq_gp}
 For any $u \in W_h$, it holds
\begin{align}
 \Delta t^2 (\partial_t u, \partial_t u)_{Q^{h,n}} &\lesssim \|u\|^2_{\mathcal{E}(Q^{h,n})} \lesssim h^{-j} j^n_h(u,u) + \| u \|_{Q^{h,n}}^2, \label{eq_temporal_inverse_ineq_gp}
\end{align}
\end{lemma}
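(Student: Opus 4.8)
The plan is to establish the two inequalities of the chain separately. The second one, $\|u\|^2_{\mathcal{E}(Q^{h,n})} \lesssim h^{-j} j^n_h(u,u) + \|u\|^2_{Q^{h,n}}$, is precisely \Cref{eq1_gp_E_IpGP} of \Cref{lemma_gp_E_IpGP}, so there is nothing new to prove there; it merely needs to be quoted. Hence the actual work is the first bound, $\Delta t^2 (\partial_t u, \partial_t u)_{Q^{h,n}} \lesssim \|u\|^2_{\mathcal{E}(Q^{h,n})}$, after which the full statement follows by concatenation.

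For that first bound I would proceed in two short steps. First, pass from the cut region to the uncut one: since $Q^{h,n} \subseteq \mathcal{E}(Q^{h,n})$ and $\partial_t u$ is a well-defined polynomial on all of $\mathcal{E}(Q^{h,n})$, one has trivially $\|\partial_t u\|^2_{Q^{h,n}} \leq \|\partial_t u\|^2_{\mathcal{E}(Q^{h,n})}$. The point of this enlargement is that $\mathcal{E}(Q^{h,n}) = \bigcup\{T\times I_n\}$ is a genuine prismatic (tensor-product) domain, on each cell $T\times I_n$ of which $u$ is, by the definition of $W_h$, a polynomial of degree $k_t$ in $t$. Second, on a fixed such cell I would apply Fubini and the one-dimensional inverse estimate $\|\partial_t p\|_{L^2(I_n)} \lesssim \Delta t^{-1}\|p\|_{L^2(I_n)}$, valid for $p \in \mathcal{P}^{k_t}(I_n)$ with a constant depending only on $k_t$ (scaling to a reference interval of unit length), then integrate over $T$ and sum over all cells $T\times I_n \subseteq \mathcal{E}(Q^{h,n})$. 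This yields $\|\partial_t u\|^2_{\mathcal{E}(Q^{h,n})} \lesssim \Delta t^{-2}\|u\|^2_{\mathcal{E}(Q^{h,n})}$, i.e. $\Delta t^2\|\partial_t u\|^2_{\mathcal{E}(Q^{h,n})} \lesssim \|u\|^2_{\mathcal{E}(Q^{h,n})}$, and combining with the first step gives the claim.

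I do not expect a genuine obstacle here; the only subtlety worth flagging is why one must enlarge to $\mathcal{E}(Q^{h,n})$ before invoking the temporal inverse inequality: over the cut region $Q^{h,n}$ the time-fibres above a fixed spatial point need not be full intervals of length $\Delta t$, and a fibre-wise inverse estimate would then carry a constant degenerating on thin cuts. Enlarging to the uncut prismatic domain is harmless precisely because the right-hand side of this first inequality is $\|u\|_{\mathcal{E}(Q^{h,n})}$, not $\|u\|_{Q^{h,n}}$. If one instead insisted on the sharper right-hand side $\|u\|_{Q^{h,n}}^2$, the Ghost penalty would be needed again — which is exactly what the stated chain provides by appending \Cref{lemma_gp_E_IpGP} as the second inequality.
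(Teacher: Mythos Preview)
Your proposal is correct and matches the paper's approach. The paper does not spell out a proof of this lemma directly, but the proof of the immediately following spatial analogue explicitly says it is ``very similar to the temporal derivative property'' and describes the same three steps you give: enlarge to the tensor-product domain $\mathcal{E}(Q^{h,n})$, apply the standard one-dimensional inverse inequality fibrewise, and then invoke \Cref{lemma_gp_E_IpGP} for the second inequality. Your remark on why the enlargement is needed (degenerating constants on thin time-fibres in the cut region) is a welcome clarification that the paper leaves implicit.
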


\begin{lemma}\label{gp_inv_ineq}
 For any $u\in W_h$, it holds
\begin{align}
 \Delta t^2 J(\partial_t  u, \partial_t u) &\lesssim J(u,u) \label{eq_gp_inv_ineq_dt} \\
 h^2 J( \nabla u, \nabla u) &\lesssim J(u,u) \label{eq_gp_inv_ineq_grad}
\end{align}
\end{lemma}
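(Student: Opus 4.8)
The plan is to prove both bounds facet-by-facet and time-slice-by-time-slice, reducing each to a scaled polynomial inverse inequality. The crucial preliminary observation is that the Ghost-penalty jump operator commutes with differentiation in time and in space: since for a polynomial $q = u|_{T_{3-i}}$ the two polynomials $\partial_t \mathcal{E}^p(q)$ and $\mathcal{E}^p(\partial_t q)$ agree on $T_{3-i}\times I_n$ (a set of nonempty interior) they agree everywhere, and the same holds with $\nabla$ in place of $\partial_t$; hence $\jump{\partial_t u}_{\omega_F} = \partial_t \jump{u}_{\omega_F}$ and $\jump{\nabla u}_{\omega_F} = \nabla \jump{u}_{\omega_F}$ on each half $T_i$ of any facet patch $\omega_F$. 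With this in hand, $j_F^n(\partial_t u, \partial_t u) = \int_{I_n}\!\int_{\omega_F} \gamma_J h^{-1} (\partial_t \jump{u}_{\omega_F})^2$ and likewise for the gradient, so it suffices to absorb the extra derivative into an inverse constant.

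For \Cref{eq_gp_inv_ineq_dt} I would fix $F \in \mathcal{F}^{n}$ with $\omega_F = T_1\cup T_2$ and note that, by the tensor-product structure of $W_h$, for each fixed $x\in T_i$ the map $t\mapsto\jump{u}_{\omega_F}(x,t)$ is a univariate polynomial of degree at most $k_t$ on the interval $I_n$ of length $\Delta t$. The one-dimensional polynomial inverse estimate $\|\partial_t p\|_{L^2(I_n)}^2 \lesssim \Delta t^{-2}\|p\|_{L^2(I_n)}^2$, applied pointwise in $x$ and integrated over $\omega_F$, gives
\[
 \int_{I_n}\!\!\int_{\omega_F} (\partial_t \jump{u}_{\omega_F})^2\,\mathrm{d}x\,\mathrm{d}t \;\lesssim\; \Delta t^{-2}\int_{I_n}\!\!\int_{\omega_F}(\jump{u}_{\omega_F})^2\,\mathrm{d}x\,\mathrm{d}t .
\]
Multiplying by $\gamma_J h^{-1}$, using the commutation identity on the left, summing over $F\in\mathcal{F}^{n}$ and then over $n$, and finally multiplying by $\Delta t^2$ yields exactly $\Delta t^2 J(\partial_t u,\partial_t u)\lesssim J(u,u)$.

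For \Cref{eq_gp_inv_ineq_grad} the argument is symmetric in the roles of space and time: for each fixed $t\in I_n$ the restriction $x\mapsto\jump{u}_{\omega_F}(x,t)|_{T_i}$ is a polynomial of degree at most $k_s$ on the simplex $T_i$ of diameter $\lesssim h$, so the standard spatial inverse estimate $\|\nabla p\|_{L^2(T_i)}^2\lesssim h^{-2}\|p\|_{L^2(T_i)}^2$ applies on each of $T_1$ and $T_2$; integrating over $t\in I_n$, adding the two contributions, multiplying by $\gamma_J h^{-1}$, and invoking $\jump{\nabla u}_{\omega_F}=\nabla\jump{u}_{\omega_F}$ gives $j_F^n(\nabla u,\nabla u)\lesssim h^{-2}j_F^n(u,u)$; summing over $F$ and $n$ and multiplying by $h^2$ gives the claim. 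The only genuine subtlety in the whole argument is the commutation of the polynomial extension $\mathcal{E}^p$ with $\partial_t$ and $\nabla$ — everything else is a routine scaled polynomial inverse inequality — so I do not expect any serious obstacle here.
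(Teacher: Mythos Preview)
Your argument is correct and is exactly the standard route for these Ghost-penalty inverse inequalities: the key observation that $\mathcal{E}^p$ commutes with $\partial_t$ and $\nabla$ (so the patch jump commutes with differentiation), followed by the one-dimensional and simplicial polynomial inverse estimates applied to the tensor-product polynomial $\jump{u}_{\omega_F}|_{T_i}\in\mathcal{P}^{k_s}(T_i)\otimes\mathcal{P}^{k_t}(I_n)$. The paper does not spell out a proof here but defers to the cited references \cite{preuss18,heimann2025discretizationerroranalysishigh}, where the same commutation-plus-inverse-inequality argument is used; your write-up is a faithful reconstruction of that approach.
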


Moreover, we can establish the following new result
\begin{lemma}
 For any $u \in W_h$, it holds
 \begin{equation}
  h^2 (\nabla u, \nabla u)_{Q^{h,n}} \lesssim \|u\|^2_{\mathcal{E}(Q^{h,n})} \lesssim h^{-j} j^n_h(u,u) + \| u \|_{Q^{h,n}}^2, \label{eq_spatial_inverse_ineq_gp}
 \end{equation}
 More generally, let $u \in W_h$ be given and $\varphi \in C^{\infty}([0,T])$, $\varphi > 0$. Then,
 \begin{equation}
  h^2 \| \varphi \nabla u \|_{Q^{h,n}}^2 \lesssim h^{-j} \| \varphi u \|_J^2 + \| \varphi u \|_{Q^{h,n}}^2, \label{eq_spatial_inverse_ineq_gp_scaled}
 \end{equation}
\end{lemma}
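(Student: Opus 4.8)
The plan is to obtain both bounds from a single ingredient combined with \Cref{lemma_gp_E_IpGP}: a spatial polynomial inverse estimate, applied on the \emph{uncut} elements making up $\mathcal{E}(Q^{h,n})$ rather than on the cut domain $Q^{h,n}$ itself (an inverse estimate directly on cut elements would carry a constant blowing up with the cut ratio, which is precisely what the Ghost penalty is there to repair). For the first, unscaled inequality \Cref{eq_spatial_inverse_ineq_gp} I would note that $u\in W_h$ restricts to a polynomial in $\mathcal{P}^{k_s}(T)\otimes\mathcal{P}^{k_t}(I_n)$ on each full element $T\times I_n$ comprising $\mathcal{E}(Q^{h,n})$, so the standard inverse estimate in the spatial variables gives $\|\nabla u\|_{L^2(T\times I_n)}\lesssim h^{-1}\|u\|_{L^2(T\times I_n)}$ elementwise (the temporal factor being untouched); summing over these elements and using $Q^{h,n}\subseteq\mathcal{E}(Q^{h,n})$ yields $h^2(\nabla u,\nabla u)_{Q^{h,n}}\le h^2\|\nabla u\|_{\mathcal{E}(Q^{h,n})}^2\lesssim\|u\|_{\mathcal{E}(Q^{h,n})}^2$, which is the first $\lesssim$. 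The second $\lesssim$ is then just \Cref{eq1_gp_E_IpGP}.

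For the scaled inequality \Cref{eq_spatial_inverse_ineq_gp_scaled} the key observation is that $\varphi=\varphi(t)$ depends on time only, so the spatial gradient commutes, $\varphi\nabla u=\nabla(\varphi u)$, and at every fixed $t$ the function $x\mapsto\varphi(t)u(x,t)$ is still an elementwise spatial polynomial of degree $k_s$. Hence the spatial inverse estimate applies pointwise in $t$, and after integrating over $I_n$ and summing over the elements of $\mathcal{E}(Q^{h,n})$ one gets $h^2\|\varphi\nabla u\|_{Q^{h,n}}^2=h^2\|\nabla(\varphi u)\|_{Q^{h,n}}^2\lesssim\|\varphi u\|_{\mathcal{E}(Q^{h,n})}^2$. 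It then remains to bound $\|\varphi u\|_{\mathcal{E}(Q^{h,n})}^2$ by $h^{-j}\|\varphi u\|_J^2+\|\varphi u\|_{Q^{h,n}}^2$, i.e.\ to run \Cref{lemma_gp_E_IpGP} with $\varphi u$ in place of $u$.

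The main (and essentially only) obstacle is precisely this last step: $\varphi u$ is not a member of $W_h$, so \Cref{lemma_gp_E_IpGP} does not literally apply. I would argue that its proof (as in \cite{preuss18, heimann2025discretizationerroranalysishigh}) carries over unchanged, because that argument only uses that the function is, at each time $t$, an elementwise spatial polynomial and that the facet jump operator $\jump{\cdot}_{\omega_F}$ controls the transition between neighbouring patches; since $\varphi(t)$ is a scalar, $\jump{\varphi u}_{\omega_F}=\varphi\,\jump{u}_{\omega_F}$, and the chain-of-patches estimate goes through with $\varphi(t)$ simply carried along at each fixed time, delivering $\|\varphi u\|_{\mathcal{E}(Q^{h,n})}^2\lesssim h^{-j}j_h^n(\varphi u,\varphi u)+\|\varphi u\|_{Q^{h,n}}^2\le h^{-j}\|\varphi u\|_J^2+\|\varphi u\|_{Q^{h,n}}^2$. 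If one wishes to avoid revisiting that proof, a safe fallback is to use that $\varphi$ is bounded above and below by positive constants on the compact interval $[0,T]$, reduce to \Cref{lemma_gp_E_IpGP} for genuine $u\in W_h$, and then restore the $\varphi$-weights on both sides via this two-sided bound — at the cost of constants additionally depending on $\|\varphi\|_\infty$ and $\min_{[0,T]}\varphi$, which is immaterial in the stability proof, where $\varphi$ will be a fixed exponential weight in time.
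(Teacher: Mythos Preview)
Your proof is correct and follows essentially the same route as the paper: extend $Q^{h,n}$ to the tensor-product domain $\mathcal{E}(Q^{h,n})$, apply the standard spatial inverse estimate at each fixed time, then invoke \Cref{lemma_gp_E_IpGP}; for the scaled version both you and the paper exploit that $\varphi$ depends on $t$ only, so all spatial polynomial arguments go through unchanged with $\varphi(t)$ as a passive multiplicative weight. You are in fact more explicit than the paper about the technicality that $\varphi u\notin W_h$ and how to get around it (re-running the patch chain argument with the scalar carried along, or using the two-sided bound on $\varphi$), whereas the paper simply asserts that ``all discrete arguments relate to spatial functions, so that the argument generalises for a temporal scaling''.
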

\begin{proof}
 The proof is very similar to the temporal derivative property \Cref{lemma_temporal_and_spatial_inverse_ineq_gp}. In a first step, we bound the norm by extending the domain to the tensor-product domain $\mathcal{E}(Q^{h,n})$. There, we can apply for any fixed time the standard discrete inequality in space. Lastly, we employ \Cref{lemma_gp_E_IpGP} for moving to the physical domain at the cost of a Ghost penalty summand. Relating to \Cref{eq_spatial_inverse_ineq_gp_scaled}, we note that all discrete arguments relate to spatial functions, so that the argument generalises for a temporal scaling as stated.
\end{proof}
\subsection{Mass conserving bilinear form and trace inequality}
In addition, we define a variant of the bilinear form $B_h$, called $B_{mc}$, which stands for mass conserving bilinear form: 
\begin{align}
 B_{mc}(u,v) = & ~ (u, -\partial_t v - \vec{w} \cdot \nabla v)_{Q^{h}} \label{eq_intro_Bmc}
 +\sum_{n=1}^{N-1} - (u^n_-, \jump{v}^n)_{\Omega^h(t_n)} + (u^N_-, v^N_-)_{\Omega^h(t_N)}.
\end{align}
This bilinear form could be used in its own right as the computational bilinear form of consideration (c.f. e.g. \cite{MYRBACK2024117245, heimann2025discretizationerroranalysishigh} for the related convection-diffusion case and Paragraph ``Mass Conserving variant method'' in \Cref{sect_num_exp} below for a numerical experiment in the transport equation case). For the purposes of this analysis, however, we mostly use it as a theoretical tool to obtain relevant positive norm contributions in the stability from symmetric testing summands.

The difference between those two formulations, which stems from the discrete geometries, can be controlled by the following Lemma.
\begin{lemma}[Mass conservation form mismatch] \label{lemma_B_Bmc_diff}
For all $u,v \in W_h \cup H^1(Q^h)$ the following holds
\begin{equation}
| B(u,v) - B_{mc}(u,v) | \leq C_{\ref{lemma_B_Bmc_diff}} \left(h + \Delta t\right) \left( \int_0^T \int_{\partial \Omega^h(t)} uv \, \mathrm{d}(\partial \Omega^h(t)) \, \mathrm{d}t \right). \label{eq_B_Bmc_diff}
\end{equation}
\end{lemma}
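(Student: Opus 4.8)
The plan is to first establish, by a slicewise space--time integration by parts, the exact identity
\[
 B(u,v) - B_{mc}(u,v) = \int_0^T \int_{\partial\Omega^h(t)} u\,\beta\,v \,\mathrm{d}(\partial\Omega^h(t))\,\mathrm{d}t, \qquad \beta := \frac{\vec w\cdot\nabla_x\phi^{\text{lin}} + \partial_t\phi^{\text{lin}}}{|\nabla_x\phi^{\text{lin}}|},
\]
and then to prove the pointwise bound $\|\beta\|_{\infty,\,\cup_t\partial\Omega^h(t)} \lesssim h + \Delta t$, from which the claim follows by Cauchy--Schwarz inside the boundary integral (the right-hand side of \Cref{eq_B_Bmc_diff} understood with $|uv|$, which agrees with the stated form in the intended application $u=v$).

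For the identity, fix a time slice $I_n$ and set $F := (\vec w u, u) \in \mathbb{R}^{d+1}$, so that $\mathrm{div}_{x,t}F = \partial_t u + \mathrm{div}(\vec w u)$. Since $Q^{h,n}$ is a bounded Lipschitz space--time domain (recall $\phi^{\text{lin}}$ is continuous in time, piecewise linear in space), the divergence theorem gives
\[
 (\partial_t u + \mathrm{div}(\vec w u), v)_{Q^{h,n}} = (u, -\partial_t v - \vec w\cdot\nabla v)_{Q^{h,n}} - (u^{n-1}_+, v^{n-1}_+)_{\Omega^h(t_{n-1})} + (u^n_-, v^n_-)_{\Omega^h(t_n)} + L_n(u,v),
\]
where the flat faces at $t_{n-1}$ and $t_n$ (outward normals $(0,-1)$ and $(0,1)$ respectively) produce the two temporal boundary terms and the lateral face $\{(x,t):t\in I_n,\ x\in\partial\Omega^h(t)\} = \{\phi^{\text{lin}}=0\}\cap(\mathbb{R}^d\times I_n)$ contributes $L_n(u,v) := \int_{I_n}\int_{\partial\Omega^h(t)} u\,\beta\,v$: its outward unit normal is $\nabla_{x,t}\phi^{\text{lin}}/|\nabla_{x,t}\phi^{\text{lin}}|$, and the coarea factor converting the space--time surface measure to $\mathrm{d}(\partial\Omega^h(t))\,\mathrm{d}t$ equals $|\nabla_{x,t}\phi^{\text{lin}}|/|\nabla_x\phi^{\text{lin}}|$, which cancels the normalisation. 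Summing over $n=1,\dots,N$, the volume terms assemble to $(u,-\partial_t v - \vec w\cdot\nabla v)_{Q^h}$; the $n=1$ bottom-face term cancels the initial term $(u^0_+,v^0_+)_{\Omega^h(t_0)}$ of $B$; and the remaining temporal terms, after adding $\sum_{n=1}^{N-1}(\jump{u}^n,v^n_+)_{\Omega^h(t_n)}$ from $B$ and using $\jump{u}^n = u^n_+ - u^n_-$, reduce to $(u^N_-,v^N_-)_{\Omega^h(t_N)} - \sum_{n=1}^{N-1}(u^n_-,\jump{v}^n)_{\Omega^h(t_n)}$, exactly the endpoint and jump terms of $B_{mc}$. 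This leaves the displayed identity and is routine bookkeeping.

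The crux is the bound on $\beta$. I would use that the hypothesis ``$\partial\Omega(t)$ is transported by $\vec w$'' is equivalent to the levelset transport identity $\partial_t\phi + \vec w\cdot\nabla\phi = 0$ on $\{\phi(\cdot,t)=0\} = \partial\Omega(t)$; since $\phi$ and $\vec w$ are smooth, $\partial_t\phi + \vec w\cdot\nabla\phi$ vanishes on $\partial\Omega(t)$ and is hence $O(\mathrm{dist}(\cdot,\partial\Omega(t)))$ in a neighbourhood. On $\partial\Omega^h(t) = \{\phi^{\text{lin}}(\cdot,t)=0\}$ one has $|\phi| = |\phi-\phi^{\text{lin}}| \lesssim h^2 + \Delta t^{q_t+1}$, so (with $|\nabla_x\phi|$ bounded below near $\partial\Omega$, as is standard for the levelset function) $\mathrm{dist}(\cdot,\partial\Omega(t)) \lesssim h^2 + \Delta t^{q_t+1}$ there. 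Splitting
\[
 \vec w\cdot\nabla_x\phi^{\text{lin}} + \partial_t\phi^{\text{lin}} = \big(\vec w\cdot\nabla_x\phi + \partial_t\phi\big) + \vec w\cdot\nabla_x(\phi^{\text{lin}}-\phi) + \partial_t(\phi^{\text{lin}}-\phi),
\]
the first term is $\lesssim h^2 + \Delta t^{q_t+1}$ on $\partial\Omega^h(t)$, the second is $\lesssim \|\vec w\|_\infty\|\nabla_x(\phi-\phi^{\text{lin}})\|_\infty \lesssim h + \Delta t^{q_t+1}$, and the third is $\lesssim \|\partial_t(\phi-\phi^{\text{lin}})\|_\infty$, which I would estimate by $\lesssim h^2 + \Delta t^{q_t}$ by combining the spatial piecewise-linear interpolation error of $\partial_t\phi$ with the temporal approximation error $\|\partial_t(\phi-\phi_h)\|_\infty \lesssim \Delta t^{q_t}$ of the construction of \cite{heimann2024geometrically} (time differentiation costs one order). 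Since $q_t\geq 1$ all three are $\lesssim h+\Delta t$; together with $|\nabla_x\phi^{\text{lin}}| \geq |\nabla_x\phi| - \|\nabla_x(\phi-\phi^{\text{lin}})\|_\infty \gtrsim 1$ for $h$ small, this yields $\|\beta\|_\infty \lesssim h+\Delta t$ and hence the lemma.

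The step I expect to be the main obstacle is precisely this sharp control of $\beta$ on the perturbed boundary $\partial\Omega^h(t)$, in particular the bound on $\|\partial_t(\phi-\phi^{\text{lin}})\|_\infty$, which is not among the geometric estimates quoted in the mesh assumption and must be imported from (or re-derived in the spirit of) \cite{heimann2024geometrically}. The remaining ingredients --- the space--time divergence theorem, the telescoping of the temporal terms, and the coarea change of variables --- are elementary.
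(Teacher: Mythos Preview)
Your proposal is correct and follows essentially the same approach as the paper: the paper's proof defers to the framework of \cite{FH_PHD_2025,heimann2025discretizationerroranalysishigh} and notes that it ``by and large follows an argument of integration by parts on the space-time domain,'' which is exactly the slicewise divergence-theorem identity plus the bound on the residual normal velocity that you spell out explicitly. Your self-contained derivation, including the explicit form of $\beta$ and the identification of $\|\partial_t(\phi-\phi^{\mathrm{lin}})\|_\infty$ as the term requiring input from \cite{heimann2024geometrically}, is precisely what the cited framework (specialised to $q_s=1$, $\Theta=\mathrm{Id}$, $q_t\geq 1$) unpacks to.
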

\begin{proof}
 We apply the framework of \cite{FH_PHD_2025} or \cite{heimann2025discretizationerroranalysishigh} with $q_s = 1$ (corresponding to $\Theta = \mathrm{Id}$), $q_t \geq 1$. First note that the diffusion summands from $B$ and $B_{mc}$ are identical, so they cancel. Moreover, note that in the convection-diffusion analysis, the case of vanishing divergence was considered. When performing the derivation in the proof of \cite{heimann2025discretizationerroranalysishigh}, which by and large follows an argument of integration by parts on the space-time domain, one notes that the chosen pairing between $B$ and $B_{mc}$ in this article generalises the result to non-vanishing divergence.
\end{proof}
For the convenient presentation of such terms, we introduce the following straightforward notation for a norm on the spatial boundary:
\begin{equation}
 \| u \|_{\partial_s Q^h}^2 := \int_0^T \int_{\partial \Omega^h(t)} u^2 \, \mathrm{d}(\partial \Omega^h(t)) \, \mathrm{d}t
\end{equation}
\subsection{Physical and discrete material space-time derivatives}
In the stability proof, we gain control over a discrete version of the material derivative. Note that for a discrete function $u \in W_h$, $\vec{w} \cdot \nabla u \notin W_h$. To circumvent this, the Oswald operator for projecting spatially DG functions into the CG space is supplemented with a discrete version of $\vec{w}$ as follows:
\begin{assumption}
 We assume there exists a spatially elementwise $P^1$ and temporally on $I_n$ constant function $\vec{w}_1 \in W_h^{n,(1,0)}, n =1,\dots,N$, approximating $\vec{w}^e$, s.t.
 \begin{equation}
  \| \vec{w}^e - \vec{w}_1 \|_{Q_h, \infty} \lesssim h^{\frac{1}{2}}. \label{eq_w_w1_diff}
 \end{equation}
\end{assumption}
This is justified by first observing that for $h$ and $\Delta t$ sufficiently small, on each $I_n$, $w^e$ will be defined on the tensor product space-time domain $\mathcal{E}(Q^{h,n})$. Then, fixing some $t \in I_n$ we can choose e.g. the spatial FE interpolator such as presented in \cite[Eq. (4.4.22)]{brennerscott} with $s=0, m=1$. Hence, for $d=2$ we require $\| \nabla \vec{w}^e(\cdot,t) \|_{L^4(Q^h)} < \infty$, and for $d=3$ $\| \nabla \vec{w}^e(\cdot,t) \|_{L^6(Q^h)} < \infty$. Moreover, fix some $x \in \mathcal{E}(Q^{h,n})$ and assume that $\| \partial_t \vec{w}^e(x,\cdot) \|_\infty < \infty$. Then, picking e.g. the candidate $\vec{w}^e(t_{n-1})$ suffices as by elementary arguments, $\vec{w}^e(t) = \vec{w}^e(t_{n-1}) + \int_{t_{n-1}}^t \mathrm{d} \tau \partial_t \vec{w}^e(\cdot,\tau)$, so that $\| \vec{w}^e - \vec{w}^e(t_{n-1})\|_{I_n, \infty} \lesssim \Delta t \| \partial_t \vec{w}^e \|_\infty$.

\begin{definition}
 We define the space-time material derivative operator $D_t u$ and a discrete counterpart $D_t^h$ with $D_t^h W_h \subseteq W_h$ as
 \begin{equation}
  D_t u := \partial_t u + \vec{w} \cdot \nabla u, \quad D_t^h u := \partial_t u + \Pi_{Osw} (\vec{w}_1 \cdot \nabla u)
 \end{equation}
\end{definition}
Measured in the $L^2$ norm in space and time over $Q^h$, we obtain for the difference between the two
\begin{lemma}
It holds
\begin{equation}
 h \| D_t u - D_t^h u \|_{Q^h}^2 \lesssim \| u \|_{Q^h}^2 + J(u,u) \lesssim \tripplenorm{u}_j^2. \label{eq_bound_diff_Dt_DtTheta}
\end{equation}
\end{lemma}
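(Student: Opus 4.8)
The temporal derivatives in $D_t u$ and $D_t^h u$ coincide, so the quantity to be estimated is $D_t u - D_t^h u = \vec{w}\cdot\nabla u - \Pi_{Osw}(\vec{w}_1\cdot\nabla u)$. The plan is to split this as $E_1 + E_2$ by inserting $\vec{w}_1\cdot\nabla u$, with $E_1 := (\vec{w}-\vec{w}_1)\cdot\nabla u$ and $E_2 := \vec{w}_1\cdot\nabla u - \Pi_{Osw}(\vec{w}_1\cdot\nabla u)$, and to bound $h\|E_1\|_{Q^h}^2$ and $h\|E_2\|_{Q^h}^2$ separately. For $E_1$, I would use the pointwise estimate \eqref{eq_w_w1_diff} to get $h\|E_1\|_{Q^h}^2 \le h\,\|\vec{w}-\vec{w}_1\|_{Q^h,\infty}^2\,\|\nabla u\|_{Q^h}^2 \lesssim h^2\|\nabla u\|_{Q^h}^2$, and then invoke the spatial cut-mesh inverse inequality \eqref{eq_spatial_inverse_ineq_gp} with $j=-1$, summed over the time slices, to conclude $h^2\|\nabla u\|_{Q^h}^2 \lesssim h\,J(u,u)+\|u\|_{Q^h}^2 \lesssim J(u,u)+\|u\|_{Q^h}^2$.

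The term $E_2$ carries the essential work. On each time slice I would first apply the interpolation estimate for $\Pi_{Osw}$, giving $\|E_2\|_{\mathcal{E}(Q^{\text{lin},n})}^2 \lesssim \int_{I_n}\sum_{F\in\mathcal{F}_n} h_F\,\tripplejump{\vec{w}_1\cdot\nabla u}_F^2$. The key observation is that $\vec{w}_1$ is chosen componentwise in $V_h^1\subset H^1(\tilde\Omega)$, hence single-valued across every interior facet, so on $F$ one has $\tripplejump{\vec{w}_1\cdot\nabla u}_F = \vec{w}_1\cdot\tripplejump{\nabla u}_F$; combined with $\|\vec{w}_1\|_{Q^h,\infty}\le\|\vec{w}\|_{\infty}+h^{1/2}\lesssim 1$ this yields $h_F\,\tripplejump{\vec{w}_1\cdot\nabla u}_F^2 \lesssim h_F\,\tripplejump{\nabla u}_F^2$. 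Applying the facet-jump/Ghost-penalty lemma componentwise to $\nabla u$, whose entries lie in $W_{\text{DG}}^{n,k}$, gives $\sum_n\int_{I_n}\sum_{F} h_F\,\tripplejump{\nabla u}_F^2 \lesssim h^{-j}J(\nabla u,\nabla u)$, and the gradient inverse inequality \eqref{eq_gp_inv_ineq_grad}, $h^2 J(\nabla u,\nabla u)\lesssim J(u,u)$, converts this into $h^{-j}h^{-2}J(u,u)=h^{-1}J(u,u)$ for $j=-1$. Hence $\|E_2\|_{\mathcal{E}(Q^h)}^2\lesssim h^{-1}J(u,u)$, and since $Q^h\subseteq\mathcal{E}(Q^h)$ we obtain $h\|E_2\|_{Q^h}^2 \lesssim J(u,u)$.

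Adding the two bounds gives $h\|D_t u - D_t^h u\|_{Q^h}^2 \lesssim \|u\|_{Q^h}^2 + J(u,u)$, and the second inequality of the claim is immediate because, by \eqref{eq_def_tripplenorm} and the definition of $\tripplenorm{\cdot}_j$, one has $\|u\|_{Q^h}^2 + J(u,u)\le\tripplenorm{u}_j^2$. I expect the only genuinely delicate step to be the treatment of the facet jump of the product $\vec{w}_1\cdot\nabla u$: one must exploit the $H^1$-conformity of $\vec{w}_1$ so that only $\tripplejump{\nabla u}_F$ survives, and then track the $h$-powers carefully — the gain $h^{-j}=h$ from the facet-jump lemma exactly offsets the loss $h^{-2}$ incurred when passing the Ghost-penalty form through the gradient inverse inequality, leaving a net $h^{-1}$ that is absorbed by the prefactor $h$. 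The remaining steps (Hölder for $E_1$ and the previously established cut-mesh inverse inequalities) are routine.
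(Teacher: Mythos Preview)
Your proof is correct and follows essentially the same route as the paper: the same splitting $E_1+E_2$, the same handling of $E_1$ via \eqref{eq_w_w1_diff} and \eqref{eq_spatial_inverse_ineq_gp}, and the same Oswald-plus-Ghost-penalty-gradient-inverse chain for $E_2$. The only cosmetic difference is that the paper applies the packaged Corollary \eqref{eq_Oswald_detail} to pass directly to $h^2 J(\vec{w}_1\nabla u,\vec{w}_1\nabla u)$ and then invokes \eqref{eq_gp_inv_ineq_grad}, whereas you unpack this into the facet-jump Oswald estimate first and use the $H^1$-conformity of $\vec{w}_1$ to factor it out of the standard facet jump before reassembling---arguably a slightly cleaner justification of why $\vec{w}_1$ can be pulled through.
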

\begin{proof}
 We calculate
 \begin{align}
  h \| \vec{w} \nabla u - &  \Pi_{Osw} (\vec{w}_1 \nabla u) \|^2_{Q^h} \lesssim h \| \vec{w} \nabla u - \vec{w}_1 \nabla u\|_{Q^h}^2 + h \| (\mathrm{Id} -\Pi_{Osw}) (\vec{w}_1 \nabla u) \|^2_{Q^h} \nonumber \\
  &\lesssim h^2 \| \nabla u \|_{Q^h}^2 + h^2 J(\vec{w}_1 \nabla u, \vec{w}_1 \nabla u) \text { by } \cref{eq_w_w1_diff}, \cref{eq_Oswald_detail} \nonumber \\ 
  &\lesssim \| u \|_{Q^h}^2 + (h+ |\vec{w}_1|_\infty) J(u,u) \text { by } \cref{eq_spatial_inverse_ineq_gp}, \cref{eq_gp_inv_ineq_grad}. \nonumber
 \end{align}
 All of these terms are bounded by $\tripplenorm{u}_j^2$.
\end{proof}

\subsection{Space-Time derivatives of and temporally scaled discrete functions in $\tripplenorm{\cdot}$}
We continue with an estimate on the discrete material derivative in our problem specific norm $\tripplenorm{\cdot}_j$, as well as a similar bound on the temporally rescaled variant of $u$. Both of these results will be combined in the stability proof to yield $\tripplenorm{ \tilde v(u) }_j \lesssim \tripplenorm{u}_j$, for $\tilde v(u)$ a linear combination of the two.

\begin{lemma} \label{lemma_tripplenorm_Dtu_bnd_u}
 It holds for all $u \in W_h$, 
 \begin{equation}
  \tripplenorm{h D_t^h u}_j \lesssim \tripplenorm{u}_j. \label{eq_hDtu_tripplenorm_bnd_u}
 \end{equation}
\end{lemma}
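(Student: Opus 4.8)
The goal is to bound $\tripplenorm{h D_t^h u}_j^2$ term by term against $\tripplenorm{u}_j^2$, recalling that $\tripplenorm{v}_j^2 = h\|D_t v\|_{Q^h}^2 + \|v\|_{Q^h}^2 + \tripplejump{v}^2 + J(v,v)$. Write $v := h D_t^h u = h\partial_t u + h\, \Pi_{Osw}(\vec{w}_1\cdot\nabla u)$. I will estimate the four contributions in the order: (i) the volumetric $L^2$ term $\|v\|_{Q^h}^2$, (ii) the Ghost-penalty term $J(v,v)$, (iii) the jump term $\tripplejump{v}^2$, and (iv) the material-derivative term $h\|D_t v\|_{Q^h}^2$. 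Throughout, the guiding principle is that $v$ is $h$ times a discrete differential operator applied to $u$, so every inverse estimate from \Cref{sect_stab_analysis} costs a factor $h^{-1}$ that the prefactor $h$ exactly absorbs; the Oswald operator $\Pi_{Osw}$ is $L^2$-stable up to the Ghost-penalty gap \Cref{eq_Oswald_detail}, and $\vec{w}_1$ is bounded in $L^\infty$.

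For (i): split $\|v\|_{Q^h} \le h\|\partial_t u\|_{Q^h} + h\|\Pi_{Osw}(\vec w_1\cdot\nabla u)\|_{Q^h}$. The first term is controlled by $\Delta t\,\|\partial_t u\|_{Q^{h,n}}$ (Ass.~\ref{assume_h_delta_t_equiv}) and then by \Cref{lemma_temporal_and_spatial_inverse_ineq_gp}, giving $\lesssim h^{-j}j_h^n(u,u)+\|u\|_{Q^{h,n}}^2$, which with $j=-1$ is $\lesssim J(u,u)+\|u\|_{Q^h}^2 \lesssim \tripplenorm{u}_j^2$. For the second, $L^2$-stability of $\Pi_{Osw}$ on $\mathcal{E}(Q^{h,n})$ up to \Cref{eq_Oswald_detail} plus $\|\vec w_1\|_\infty \lesssim 1$ gives $h^2\|\Pi_{Osw}(\vec w_1\cdot\nabla u)\|_{Q^h}^2 \lesssim h^2\|\nabla u\|_{\mathcal E(Q^h)}^2 + h^3 J(\vec w_1\cdot\nabla u,\vec w_1\cdot\nabla u)$, which by the spatial inverse inequality \Cref{eq_spatial_inverse_ineq_gp} and \Cref{eq_gp_inv_ineq_grad} is $\lesssim \|u\|_{Q^h}^2 + J(u,u) \lesssim \tripplenorm{u}_j^2$. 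For (ii), $J(v,v)$: since $v$ is a sum of $h\partial_t u$ (discrete in $W_h$) and $h\Pi_{Osw}(\vec w_1\cdot\nabla u)$, use $\Delta t^2 J(\partial_t u,\partial_t u)\lesssim J(u,u)$ from \Cref{eq_gp_inv_ineq_dt} for the first piece, and for the second piece bound $J(\Pi_{Osw}(\vec w_1\cdot\nabla u),\cdot)$ by $J$ of its DG argument $\vec w_1\cdot\nabla u$ (the Oswald operator does not increase the Ghost-penalty norm, or alternatively bound via \Cref{eq_Oswald_detail} combined with \Cref{lemma_gp_E_IpGP}) and then apply \Cref{eq_gp_inv_ineq_grad} together with $\|\vec w_1\|_\infty\lesssim 1$; the prefactor $h^2$ absorbs the $h^{-2}$ from the gradient inverse inequality.

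For (iii), the jump term: $\tripplejump{v}^2$ involves spatial traces of $v$ on the time-slice interfaces $\Omega^h(t_n)$ and at $t_0$, $t_N$. Use \Cref{lemma_bound_tn_ag_temporal_vol} to bound these traces by $\frac{1}{\Delta t}\|v\|_{\mathcal E(Q^{h,n})}^2$ (and further by $\frac1{\Delta t}(h^{-j}j_h^n(v,v)+\|v\|_{Q^{h,n}}^2)$). Applying the already-obtained bounds from (i) and (ii) on each time slice gives $\tripplejump{v}^2 \lesssim \frac{1}{\Delta t}(\,\text{slicewise }J(u,u)\text{ and }\|u\|^2\text{ terms})$; crucially these incoming bounds already carry a spare factor $h^2$ from the definition of $v$ that is not fully consumed by the inverse inequalities on the slice, so one factor $h\sim\Delta t$ cancels the $\frac1{\Delta t}$ — this bookkeeping is where I expect to have to be careful. (Concretely: one tracks that $\|v\|_{\mathcal E(Q^{h,n})}^2 \lesssim h^2\cdot(\text{inverse-ineq RHS}) \lesssim h^2 \cdot h^{-2}(\dots)$ is already $O(1)\cdot(\dots)$, and the extra $\frac1{\Delta t}$ needs one more $h$ from somewhere — which is supplied by choosing to spend only part of the $h$-budget on each inverse step, e.g. using $\Delta t^2(\partial_t u,\partial_t u)\lesssim\|u\|^2_{\mathcal E}$ rather than the full $h^{-j}j_h^n$ bound in the intermediate step.) For (iv), the material-derivative term $h\|D_t v\|_{Q^h}^2$: here $D_t v = \partial_t v + \vec w\cdot\nabla v$, and $v$ itself contains a $\partial_t u$ and a $\nabla u$, so $D_t v$ contains second-order discrete derivatives ($h\partial_t^2 u$, $h\partial_t\nabla u$, $h\nabla^2$-type terms through $\Pi_{Osw}$). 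Each such term, after using the temporal and spatial inverse inequalities of \Cref{lemma_temporal_and_spatial_inverse_ineq_gp} and \Cref{eq_spatial_inverse_ineq_gp} twice (each costing $h^{-1}$), together with the prefactor $h$ on the outside and $h$ inside $v$, yields a net $h\cdot h^2\cdot h^{-2}\cdot h^{-1}=O(1)$ scaling against $\|u\|_{Q^h}^2+J(u,u)$; one also replaces $\vec w$ by $\vec w_1$ using \Cref{eq_w_w1_diff} at the cost of $h^{1/2}$-controlled remainders as in \Cref{eq_bound_diff_Dt_DtTheta}. Summing (i)–(iv) over $n$ gives \Cref{eq_hDtu_tripplenorm_bnd_u}.

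The main obstacle is step (iv) combined with the scaling bookkeeping in (iii): $D_t^h u$ and then $D_t(h D_t^h u)$ require applying inverse estimates twice, and one must verify that the powers of $h$ and $\Delta t$ (using Ass.~\ref{assume_h_delta_t_equiv} to interchange them freely) balance exactly, with no leftover negative power of $h$ — in particular that the Ghost-penalty inverse inequalities \Cref{eq_gp_inv_ineq_dt}–\Cref{eq_gp_inv_ineq_grad} applied to the $\Pi_{Osw}$-corrected term really do close, using $J(\Pi_{Osw}(\cdot),\Pi_{Osw}(\cdot))\lesssim J(\cdot,\cdot)$ plus \Cref{eq_Oswald_detail}. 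Everything else is a routine chain of the cited discrete inequalities and $L^\infty$-boundedness of $\vec w_1$.
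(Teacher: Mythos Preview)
Your overall structure (bound each summand of $\tripplenorm{\cdot}_j^2$ separately) matches the paper, and steps (i) and (ii) are essentially fine. However, there is a genuine gap in (iii) and (iv): your plan of expanding $D_t^h u$ into $\partial_t u$ and $\Pi_{Osw}(\vec w_1\!\cdot\!\nabla u)$ and then applying inverse inequalities \emph{twice on $u$} does not close. Concretely, your scaling count ``$h\cdot h^2\cdot h^{-2}\cdot h^{-1}=O(1)$'' is off: in squared norms each inverse inequality costs $h^{-2}$, so two cost $h^{-4}$, giving $h\cdot h^2\cdot h^{-4}=h^{-1}$. The symptom is the term $h\|\partial_t u\|_{Q^h}^2$ (and likewise $h\|\nabla u\|_{Q^h}^2$), which is \emph{not} controlled by $\tripplenorm{u}_j^2$; only the combination $h\|D_t u\|_{Q^h}^2$ is. Splitting $D_t u$ into its pieces therefore loses exactly one power of $h$, and the same $h^{-1}\|u\|_{Q^h}^2$ contamination appears in your (iii) when the $\tfrac{1}{\Delta t}$ from \Cref{lemma_bound_tn_ag_temporal_vol} meets a bound on $\|v\|_{\mathcal E}^2$ that has already been reduced all the way to $\|u\|_{Q^h}^2$ with no spare $h$.

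The missing idea (and the paper's fix) is to keep $D_t^h u$ intact as a single element of $W_h$ and recycle the bounds from (i)--(ii) rather than reducing to $u$. First, do (i) differently: bound $h\|D_t^h u\|_{Q^h}^2 \le h\|D_t u\|_{Q^h}^2 + h\|D_t u - D_t^h u\|_{Q^h}^2 \lesssim \tripplenorm{u}_j^2$ using \Cref{eq_bound_diff_Dt_DtTheta} --- the first term is literally a summand of $\tripplenorm{u}^2$, so no inverse inequality is spent here. This means $\|v\|_{Q^h}^2 = h\cdot(h\|D_t^h u\|_{Q^h}^2)$ carries a \emph{spare} factor $h$. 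Then for (iii) and (iv) apply \emph{one} inverse inequality to the discrete function $D_t^h u$: e.g.\ $\Delta t^3\|\partial_t D_t^h u\|_{Q^h}^2 \lesssim \Delta t\big(hJ(D_t^h u,D_t^h u)+\|D_t^h u\|_{Q^h}^2\big)$ by \Cref{eq_temporal_inverse_ineq_gp}, and now both $h^2 J(D_t^h u,D_t^h u)\lesssim J(u,u)$ (from (ii)) and $h\|D_t^h u\|_{Q^h}^2\lesssim\tripplenorm{u}_j^2$ (from the corrected (i)) are already in hand. The jump term (iii) closes the same way via \Cref{lemma_bound_tn_ag_temporal_vol} applied to $D_t^h u$.
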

\begin{proof} We prove the property using squared norms.
 Starting with the Ghost-penalty summand on the left hand side, we observe
 \begin{align}
 & h^2 \| D_t^h u \|_J^2 \lesssim h^2 \| \partial_t u \|_J^2 + h^2 \| \Pi_{Osw} (\vec{w}_1 \nabla u) \|_J^2  \nonumber \\
  \lesssim & \underbrace{ h^2 J(\partial_t u, \partial_t u) }_{\lesssim J(u,u) \text{ by } \cref{eq_gp_inv_ineq_dt}} + \|\vec{w}_1\|_\infty^2 \underbrace{ h^2 J(\nabla u, \nabla u)}_{\lesssim J(u,u) \text{ by } \cref{eq_gp_inv_ineq_grad}} + h^2 \| (\mathrm{Id} - \Pi_{Osw})(\vec{w}_1 \nabla u) \|_J^2. \nonumber
 \end{align}
 As $J(u,u)$ is among the summands of $\tripplenorm{u}_j^2$, it remains to estimate the error of the Oswald interpolator, where we note that the integral over the patches can be bound by the global $L^2$ norm with $h^j$ scaling:
 \begin{align*}
   h^2 \| (\mathrm{Id} - \Pi_{Osw})(\vec{w}_1 \nabla u) \|_J^2  &\lesssim h^{j+2} \| (\mathrm{Id} - \Pi_{Osw})(\vec{w}_1 \nabla u)\|_{\mathcal{E}(Q^h)}^2 \\
   &\lesssim h^2 J(\vec{w}_1 \nabla u, \vec{w}_1 \nabla u) \text { by } \cref{eq_Oswald_detail} \\
   &\lesssim \|\vec{w}_1\|_\infty^2 J(u,u) \text{ by } \cref{eq_gp_inv_ineq_grad}.
 \end{align*}
 Taking the two last results together yields
 \begin{equation}
  h^2 \| D_t^h u \|_J^2 \lesssim (1 + \|\vec{w}_1\|_\infty^2) J(u,u) \lesssim J(u,u). \label{eq_J_Dt_u_estimate}
 \end{equation}
 Now we can consider the next summand
 \begin{align*}
  \Delta t^3 \| (\partial_t + w \nabla)(D_t^h u)\|_{Q^h}^2 \lesssim \Delta t^3 \| \partial_t D_t^h u \|_{Q^h}^2 + \Delta t^3 \|\vec{w}\|_\infty^2 \| \nabla (D_t^h u)\|_{Q^h}^2,
 \end{align*}
 where in particular
 \begin{align*}
  \Delta t^3 \| \partial_t D_t^h u \|_{Q^h}^2 &\lesssim \Delta t ( h J(D_t^h u, D_t^h u) + \| D_t^h u\|^2_{Q^h}) \text{ by } \cref{eq_temporal_inverse_ineq_gp} \\
  &\lesssim \tripplenorm{u}_j^2 + \Delta t \| D_t u - D_t^h u \|^2_{Q^h} \lesssim \tripplenorm{u}_j^2 \text{ by } \cref{eq_bound_diff_Dt_DtTheta}, \cref{eq_J_Dt_u_estimate}.
 \end{align*}
 and for the other summand,
 \begin{align*}
  \Delta t^3 \| \nabla (D_t^h u)\|_{Q^h}^2&\lesssim \Delta t ( h J(D_t^h u, D_t^h u) + \| D_t^h u\|^2_{Q^h}) \text{ by } \cref{eq_spatial_inverse_ineq_gp} \\
  &\lesssim \tripplenorm{u}_j^2 \text{ as before.}
 \end{align*}
 
 Next, we note that the $L^2$ volume summand is bounded by triangle inequality and \cref{eq_bound_diff_Dt_DtTheta} as
 \begin{align}
  h^2 \| D_t^h u \|_{Q^h}^2 \lesssim h \| D_t^h u \|_{Q^h}^2 \lesssim h \| D_t u \|_{Q^h}^2 + h \| D_t u - D_t^h u \|_{Q^h}^2 \lesssim \tripplenorm{u}_j^2. \label{eq_bnd_vol_Dthu}
 \end{align}

 It remains to estimate $\tripplejump{h D_t^h u}^2$. We observe
 \begin{align}
  \tripplejump{h D_t^h u}^2 &\lesssim h^2 \left( \sum_{n=0}^{N-1} \| (D_t^h u)_+^n\|_{\Omega^h(t_n)}^2 + \sum_{n=1}^{N} \| (D_t^h u)_-^n\|_{\Omega^h(t_n)}^2 \right) \label{eq_jump_Dt_u_Gp_est} \\
  &\lesssim h \left( \sum_{n=1}^{N} h j_h^n(D_t^h u, D_t^h u) + \| D_t^h u \|_{Q^{h,n}}^2 \right) \text{ by } \cref{eq_bound_tn_ag_temporal_vol} \nonumber \\
  &\lesssim h^2 J(D_t^h u, D_t^h u) + h \| D_t^h u\|^2_{Q_h} \lesssim \tripplenorm{u}_j^2, \text{ as in } \cref{eq_J_Dt_u_estimate}, \cref{eq_bnd_vol_Dthu}. \nonumber
 \end{align}

\end{proof}
A similar boundedness property holds for the temporally rescaled discrete version of $u$, which is a discrete projection of $u e^{-\alpha t}$ for some non-negative scalar $\alpha$ to be defined in detail below.
\begin{lemma}
 It holds for all $ u \in W_h$
 \begin{equation}
  \tripplenorm{\Pi^{k_t} (u e^{-\alpha t})}_j \lesssim \tripplenorm{u}_j. \label{eq_Pikt_ue_tripple_norm_bnd_tripple_norm_u}
 \end{equation}
\end{lemma}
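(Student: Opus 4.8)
The plan is to set $\varphi(t):=e^{-\alpha t}$, so that $\varphi\in C^\infty([0,T])$ with $\|\varphi\|_\infty$ and $\|\partial_t\varphi\|_\infty$ bounded by a constant depending only on $\alpha$ and $T$, and to exploit on each time slice $I_n$ the exact splitting $\Pi^{k_t}(\varphi u)=\varphi u-e$ with $e:=(\mathrm{Id}-\Pi^{k_t})(\varphi u)$. Here $\varphi u$ and $e$ are both elementwise $\mathcal{P}^{k_s}$ in space and smooth in time, so all constituents of $\tripplenorm{\cdot}_j$ (which is a seminorm and hence obeys the triangle inequality) are well defined for them. It therefore suffices to prove $\tripplenorm{\varphi u}_j\lesssim\tripplenorm{u}_j$ and $\tripplenorm{e}_j\lesssim\tripplenorm{u}_j$.

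For the main part $\varphi u$ the four contributions to $\tripplenorm{\varphi u}_j^2$ are estimated directly. One has $\|\varphi u\|_{Q^h}^2\le\|\varphi\|_\infty^2\|u\|_{Q^h}^2$; since $\varphi=\varphi(t)$ it factors out of every spatial facet jump, so $J(\varphi u,\varphi u)\le\|\varphi\|_\infty^2 J(u,u)$; by continuity of $\varphi$ in time $\jump{\varphi u}^n=\varphi(t_n)\jump{u}^n$ (and likewise at $t_0,t_N$), so $\tripplejump{\varphi u}^2\le\|\varphi\|_\infty^2\tripplejump{u}^2$; and the product rule gives $\partial_t(\varphi u)+w\nabla(\varphi u)=\varphi(\partial_t u+w\nabla u)+(\partial_t\varphi)u$, whence $h\|\partial_t(\varphi u)+w\nabla(\varphi u)\|_{Q^h}^2\lesssim\|\varphi\|_\infty^2\,h\|\partial_t u+w\nabla u\|_{Q^h}^2+\|\partial_t\varphi\|_\infty^2\,h\|u\|_{Q^h}^2$, which is $\lesssim\tripplenorm{u}_j^2$ for $h\lesssim 1$. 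Collecting, $\tripplenorm{\varphi u}_j\lesssim\tripplenorm{u}_j$.

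For the error part $e$ the workhorse is the pair of superconvergence estimates \eqref{eq_superinterpol}, \eqref{eq_superinterpol_dt}, which — via their pointwise-in-space derivation and the commutation \eqref{eq_Pikt_nabla_commute} — apply also with the spatially discontinuous $\nabla u$ in place of $u$, since $\nabla u(x,\cdot)$ is a temporal polynomial for each fixed $x$. Summing over slices and using \Cref{lemma_gp_E_IpGP} (with $j=-1$) they give $\|e\|_{\mathcal{E}(Q^h)}^2\lesssim\Delta t^2\|u\|_{\mathcal{E}(Q^h)}^2\lesssim\Delta t^2\big(hJ(u,u)+\|u\|_{Q^h}^2\big)$, $\|\partial_t e\|_{\mathcal{E}(Q^h)}^2\lesssim\Delta t^2\|\partial_t u\|_{\mathcal{E}(Q^h)}^2$ and $\|\nabla e\|_{\mathcal{E}(Q^h)}^2=\|(\mathrm{Id}-\Pi^{k_t})(\varphi\nabla u)\|_{\mathcal{E}(Q^h)}^2\lesssim\Delta t^2\|\nabla u\|_{\mathcal{E}(Q^h)}^2\lesssim\Delta t^2 h^{-2}\|u\|_{\mathcal{E}(Q^h)}^2$. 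Combining these with the standard temporal inverse estimate $\|\partial_t u\|_{\mathcal{E}(Q^h)}^2\lesssim\Delta t^{-2}\|u\|_{\mathcal{E}(Q^h)}^2$ on the tensor-product domains, \Cref{assume_h_delta_t_equiv}, and — for $J(e,e)$ — the bound $J(e,e)\lesssim h^{-1}\|e\|_{\mathcal{E}(Q^h)}^2$ obtained by estimating each Ghost-penalty patch jump of $e$ by its elementwise $L^2$ norm (polynomial extension being bounded on shape-regular patches), one checks that $\|e\|_{Q^h}^2$, $J(e,e)$ and $h\|\partial_t e+w\nabla e\|_{Q^h}^2$ are all $\lesssim\tripplenorm{u}_j^2$; the power counting closes because $\Delta t\lesssim h$ and $h,\Delta t$ are small.

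The delicate term is $\tripplejump{e}^2$. The naive route — treating $v=\Pi^{k_t}(\varphi u)\in W_h$ as a generic discrete function and applying \Cref{lemma_bound_tn_ag_temporal_vol} directly — loses a factor $h^{-1}$ and does not close. Instead I would estimate the traces $e_\pm^n$ of the \emph{error} by the sharp one-dimensional trace inequality $\|e(\cdot,t_n)\|^2_{\mathcal{E}(\Omega^{\text{lin},n})}\lesssim\Delta t^{-1}\|e\|_{\mathcal{E}(Q^{h,n})}^2+\Delta t\|\partial_t e\|_{\mathcal{E}(Q^{h,n})}^2$ applied pointwise in $x$ and integrated, and then insert both superconvergence bounds together with the temporal inverse estimate; the $\Delta t$-powers gained cancel the $\Delta t^{-1}$, leaving $\tripplejump{e}^2\lesssim\Delta t\,\|u\|_{\mathcal{E}(Q^h)}^2\lesssim\Delta t\big(hJ(u,u)+\|u\|_{Q^h}^2\big)\lesssim\tripplenorm{u}_j^2$ by \Cref{lemma_gp_E_IpGP} and \Cref{assume_h_delta_t_equiv}. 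This interplay — needing the $L^2$ \emph{and} the $\partial_t$ superconvergence estimates simultaneously, and needing the discrete commutator property in its pointwise-in-space form so it covers $\nabla u$ — is the main obstacle; the remaining work is bookkeeping of powers of $h$ and $\Delta t$.
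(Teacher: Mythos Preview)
Your proposal is correct and follows essentially the same route as the paper's proof: both split $\Pi^{k_t}(\varphi u)=\varphi u-(\mathrm{Id}-\Pi^{k_t})(\varphi u)$, bound the main part $\varphi u$ directly (product rule for the material derivative, $\varphi$ factoring out of spatial jumps and time-slice jumps), and control the error part via the superconvergence estimates \eqref{eq_superinterpol}--\eqref{eq_superinterpol_dt}, the commutation \eqref{eq_Pikt_nabla_commute}, and the one-dimensional trace inequality for $\tripplejump{e}$. The only cosmetic difference is that the paper organises the argument summand-by-summand in $\tripplenorm{\cdot}_j^2$ and applies the split inside each, whereas you apply the split once at the top level and then estimate $\tripplenorm{\varphi u}_j$ and $\tripplenorm{e}_j$ separately; the substance is identical.
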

\begin{proof}We prove the property using squared norms.
 Again, we argue for the summands individually. First, consider
 \begin{align}
  \| \Pi^{k_t} (u e^{-\alpha t})\|_J^2 &\lesssim \| u e^{-\alpha t} \|_J^2 + \|  (\mathrm{Id} - \Pi^{k_t}) (u e^{-\alpha t})\|_J^2 \nonumber \\
  &\lesssim \| u \|_J^2 + h^j \| (\mathrm{Id} - \Pi^{k_t}) (u e^{-\alpha t}) \|_{\mathcal{E}(Q^h)}^2 \nonumber \\
  &\lesssim \| u \|_J^2 + C(\alpha) \cdot h \cdot \| u \|_{\mathcal{E}(Q^h)}^2 \text{ by } \cref{eq_superinterpol} \nonumber \\
  &\lesssim \| u \|_J^2 + \| u \|_{Q^h}^2 \lesssim \tripplenorm{u}_j^2 \label{eq_J_norm_of_Pikt_ue} \text{ by } \cref{eq1_gp_E_IpGP}.
 \end{align}
 In relation to the volume term, by similar arguments,
 \begin{align*}
  \| \Pi^{k_t} (u e^{-\alpha t}) \|_{Q^h}^2 &\lesssim \| u e^{-\alpha t} \|_{Q^h}^2 + \| (\mathrm{Id} - \Pi^{k_t}) (u e^{-\alpha t}) \|_{Q^h}^2 \\
  &\lesssim \| u \|_{Q^h}^2 + C(\alpha) \cdot h^2 \cdot \| u\|_{\mathcal{E}(Q^h)}^2 \lesssim \tripplenorm{u}_j \text{ by } \cref{eq_superinterpol}, \cref{eq1_gp_E_IpGP}.
 \end{align*}

 Next, we can move from time-slice boundaries to the volume by means of a trace inequality on the boundary of the time interval $I_n$:
 \begin{align}
  &\tripplejump{\Pi^{k_t} (u e^{-\alpha t})}^2 \lesssim \tripplejump{u e^{-\alpha t}}^2 + \tripplejump{ (\mathrm{Id} - \Pi^{k_t})(u e^{-\alpha t})}^2 \nonumber \\
  &\lesssim \tripplejump{u}^2 + \frac{1}{\Delta t} \| (\mathrm{Id} - \Pi^{k_t}) (u e^{-\alpha t})\|_{\mathcal{E}(Q^h)}^2 + \Delta t \| \partial_t (\mathrm{Id} - \Pi^{k_t}) (u e^{-\alpha t})\|_{\mathcal{E}(Q^h)}^2 \nonumber \\
  &\lesssim \tripplejump{u}^2 + C(\alpha) \cdot h \cdot \| u\|^2_{\mathcal{E}(Q^h)} + C(\alpha) \cdot h^3 \cdot \| \partial_t u\|_{\mathcal{E}(Q^h)}^2 \text{ by } \cref{eq_superinterpol}, \cref{eq_superinterpol_dt} \nonumber \\
  &\lesssim \tripplenorm{u}_j^2 + C(\alpha) \cdot h \cdot ( \| u \|_{Q^h}^2 + h J(u,u)) \lesssim \tripplenorm{u}_j^2 \text{ by } \cref{eq1_gp_E_IpGP}, \cref{eq_temporal_inverse_ineq_gp}. \label{bound_fixed_time_level_Pikt_Id}
 \end{align}
 
 Finally, starting from triangle inequality
 \begin{align}
  \Delta t \| (\partial_t + w \nabla) \Pi^{k_t} ( u e^{-\alpha t} )\|^2_{Q^h} \lesssim & ~ \Delta t \| (\partial_t + w \nabla) (u e^{-\alpha t}) \|^2_{Q^h} \nonumber \\
  &+ \Delta t \| (\partial_t + w \nabla) \left[(\mathrm{Id} - \Pi^{k_t}) (u e^{-\alpha t}) \right] \|^2_{Q^h}. \nonumber
 \end{align}
 Regarding the first term, we apply product rule yielding
 \begin{equation*}
  \Delta t \| (\partial_t + w \nabla) (u e^{-\alpha t}) \|^2_{Q^h} \lesssim \Delta t \| e^{-\alpha t} (\partial_t + w \nabla) u \|^2_{Q^h} + \Delta t \alpha^2 \| e^{-\alpha t} u \|^2_{Q^h} \lesssim \tripplenorm{u}^2.
 \end{equation*}
 Regarding the second term, we apply the following respective bounds
 \begin{align*}
  \Delta t \| (\partial_t + w \nabla) \left[(\mathrm{Id} - \Pi^{k_t}) (u e^{-\alpha t}) \right] \|^2_{Q^h} \lesssim & \, \Delta t^3 C(\alpha) \| \partial_t u \|^2_{\mathcal{E}(Q^h)} \textnormal{ by } \cref{eq_superinterpol_dt}\\
  & + \Delta t |w|_\infty^2 \| \nabla \left[(\mathrm{Id} - \Pi^{k_t}) (u e^{-\alpha t}) \right] \|^2_{Q^h}\\
  \lesssim \Delta t C(\alpha) ( \| u \|_{Q^h}^2 + h J(u,u)) + \Delta t |w|_\infty^2 &\| (\mathrm{Id} - \Pi^{k_t}) (e^{-\alpha t} \nabla u) \|^2_{Q^h} \textnormal{ by } \cref{eq_temporal_inverse_ineq_gp}, \cref{eq_Pikt_nabla_commute}\\
  \lesssim \tripplenorm{u}_j^2 + \Delta t^3 |w|_\infty^2 C(\alpha) \| \nabla u \|_{\mathcal{E}(Q^h)}^2 &\lesssim \tripplenorm{u}_j^2  \textnormal{ by } \cref{eq_superinterpol}, \cref{eq_spatial_inverse_ineq_gp}.
 \end{align*}
\end{proof}

\subsection{Symmetric testing}
 In this subsection, we aim at obtaining control over the symmetric testing contribution in bilinear form and Ghost penalty. We proceed in steps about first testing with $u e^{- \alpha t}$, and then its discrete variant. Before that, we establish a special trace inequality, which is needed to control error terms stemming from the application of \Cref{lemma_B_Bmc_diff}.
 \begin{lemma}[Special trace inequality]
  For all $u \in W_h \cup H^1(Q^h)$, it holds
  \begin{equation}
    \| u \|^2_{\partial_s Q^h} \lesssim h^{-1} \| u \|_{\mathcal{E}(Q^h)}^2 + h \| \nabla u \|_{\mathcal{E}(Q^h)}^2. \label{eq_special_trace_nondiscrete}
  \end{equation}
  Specially for $u \in W_h$, we have
  \begin{equation}
    \| u \|^2_{\partial_s Q^h} \leq \frac{C_{\ref{eq_special_trace}}}{h} ( \| u \|_{Q^h}^2 + h J(u,u)) \label{eq_special_trace}.
  \end{equation}
 \end{lemma}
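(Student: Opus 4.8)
The plan is to reduce the boundary norm $\|u\|_{\partial_s Q^h}^2$ to a sum of local contributions over cut elements, apply a standard unfitted trace inequality on each of them to obtain \cref{eq_special_trace_nondiscrete}, and then derive the discrete bound \cref{eq_special_trace} by absorbing the gradient term with a plain inverse inequality and invoking \Cref{lemma_gp_E_IpGP}.

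First I would fix a time slice $I_n$ and, for $t \in I_n$, write $\partial\Omega^h(t) = \partial\Omega^{\text{lin}}(t)$ as the union of the pieces $\Gamma_T(t) := \partial\Omega^h(t) \cap T$ over the spatial elements $T \in \mathcal{T}_h$ cut by $\phi^{\text{lin}}(\cdot,t)$. Since $\phi^{\text{lin}}(\cdot,t)$ is elementwise linear, each $\Gamma_T(t)$ lies in an affine hyperplane, so the standard unfitted trace inequality over the full element --- which for affine cuts is robust with respect to the position of the cut, see e.g. \cite{Burman_Hansbo_Larson_Zahedi_2025, burman15} --- gives $\|u(\cdot,t)\|_{\Gamma_T(t)}^2 \lesssim h^{-1}\|u(\cdot,t)\|_T^2 + h\|\nabla u(\cdot,t)\|_T^2$. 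I would then sum over the cut elements, use that any element cut at some $t \in I_n$ satisfies $T \times I_n \subseteq \mathcal{E}(Q^{\text{lin},n})$, integrate over $t \in I_n$ and sum over $n$; since for each fixed time the spatial sum of $\|\cdot\|_T^2$ over cut elements is bounded by the integrand defining $\|\cdot\|_{\mathcal{E}(Q^{h,n})}^2$ (whose spatial part does not vary across $I_n$), this produces \cref{eq_special_trace_nondiscrete}. For $u \in H^1(Q^h)$ the right-hand side is to be read with $u$ replaced by its extension, which is legitimate because the relevant such $u$ in the analysis are restrictions of functions defined on $U \supseteq \mathcal{E}(Q^h)$ (cf. \cref{u_ext_assumption}).

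For the discrete estimate I would take $u \in W_h$ and note that $u$ is a polynomial on each space-time prism of $\mathcal{E}(Q^{h,n})$, so a plain spatial inverse inequality gives $\|\nabla u\|_{\mathcal{E}(Q^{h,n})}^2 \lesssim h^{-2}\|u\|_{\mathcal{E}(Q^{h,n})}^2$ and hence $h\|\nabla u\|_{\mathcal{E}(Q^h)}^2 \lesssim h^{-1}\|u\|_{\mathcal{E}(Q^h)}^2$, so that \cref{eq_special_trace_nondiscrete} collapses to $\|u\|_{\partial_s Q^h}^2 \lesssim h^{-1}\|u\|_{\mathcal{E}(Q^h)}^2$. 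Summing \cref{eq1_gp_E_IpGP} over the time slices and inserting $j = -1$ yields $\|u\|_{\mathcal{E}(Q^h)}^2 \lesssim h\,J(u,u) + \|u\|_{Q^h}^2$; combining the two estimates gives \cref{eq_special_trace}, with $C_{\ref{eq_special_trace}}$ the product of the constants involved.

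The delicate point --- the one place that really has to be got right --- is the uniformity of the trace constant with respect to the cut geometry: the inequality over the \emph{full} element $T$ is robust precisely because $\partial\Omega^h(t)$ is elementwise affine (a consequence of using $\phi^{\text{lin}}$), whereas the analogue over only the physical part $T \cap \Omega^h(t)$ degenerates as the cut becomes small. This is also why the right-hand sides are most naturally phrased over $\mathcal{E}(\cdot)$ rather than $Q^h$, and why in the discrete estimate the Ghost-penalty term $J(u,u)$ must reappear after applying \Cref{lemma_gp_E_IpGP}. The remainder --- the geometric inclusion of cut elements in $\mathcal{E}(Q^{h,n})$ and the scaling arguments --- is routine.
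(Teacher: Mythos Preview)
Your proof is correct and follows essentially the same approach as the paper: decompose $\partial\Omega^h(t)$ elementwise, apply the robust cut trace inequality on each full element $T$, integrate in time and sum to get \cref{eq_special_trace_nondiscrete}; then, for $u\in W_h$, absorb the gradient term via the spatial inverse inequality on $\mathcal{E}(Q^{h,n})$ and invoke \Cref{lemma_gp_E_IpGP} with $j=-1$. The only cosmetic difference is that the paper cites \cref{eq_spatial_inverse_ineq_gp} (which packages the inverse inequality together with the Ghost-penalty bound) rather than spelling out the plain inverse step as you do; your added remarks on the robustness of the trace constant for affine cuts and on the implicit extension for $H^1(Q^h)$ functions are accurate and useful clarifications.
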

 \begin{proof}
  We subdivide all parts of the boundary $\partial \Omega^h(t)$ for some $t \in I_n$ onto the respective element $T \in \mathcal{T}_h$ with $T \times I_n \subseteq \mathcal{E}(Q^h)$. There, the following cut trace inequality holds (c.f. for instance \cite[Eq. (3.24)]{Burman_Hansbo_Larson_Zahedi_2025})
  \begin{equation}
   \| u (\cdot,t) \|_{\partial \Omega^h(t) \cap T}^2 \lesssim h^{-1} \| u (\cdot,t) \|_T^2 + h \| \nabla u (\cdot,t) \|_T^2.
  \end{equation}
  Integrating over time yields the estimate. For discrete functions $u \in W_h$, we argue further as follows
  \begin{align}
   h^{-1} \| u \|_{\mathcal{E}(Q^h)}^2 + h \| \nabla u \|_{\mathcal{E}(Q^h)}^2 \lesssim h^{-1} \| u\|_{Q^h}^2 + J(u,u) \text{ by } \cref{eq1_gp_E_IpGP}, \cref{eq_spatial_inverse_ineq_gp}. \nonumber
  \end{align}
 \end{proof}
\begin{lemma} \label{lemma_B_symmetric_testing_contrib}
 There exist a constant $\tilde C_{\ref{eq_special_trace}}$, depending on the constant of the special trace inequality and related discrete inequalities, but independent of mesh size $h$, time step $\Delta t$, discrete geometry, or other discretisation parameters, so that it holds that for all $u \in W_h$
 \begin{equation}
B_h(u, u e^{-\alpha t}) \geq C_{\ref{lemma_B_symmetric_testing_contrib}} e^{-\alpha T} \left( \tripplejump{u}^2 + (\alpha - |\mathrm{div}(\vec{w})|_\infty - \tilde C_{\ref{eq_special_trace}}) \| u\|_{Q^h}^2 \right) - J(u,u). \label{eq_B_symmetric_testing_contrib}
 \end{equation}
\end{lemma}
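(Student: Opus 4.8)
The plan is to test $B_h$ against the exponentially reweighted function $v = u e^{-\alpha t}$ and to exploit the decomposition
\[
B_h(u, u e^{-\alpha t}) = \tfrac12\big( B_h + B_{mc} \big)(u, u e^{-\alpha t}) + \tfrac12\big( B_h - B_{mc} \big)(u, u e^{-\alpha t}),
\]
where $B_{mc}$ is the mass-conserving form of \Cref{eq_intro_Bmc}. All coercive (positive) contributions will come from the symmetrised pairing $B_h + B_{mc}$, while $B_h - B_{mc}$ is a geometric consistency error which I will control through \Cref{lemma_B_Bmc_diff}.

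First I would evaluate $(B_h + B_{mc})(u, u e^{-\alpha t})$ exactly. Using $\partial_t(u e^{-\alpha t}) = e^{-\alpha t}(\partial_t u - \alpha u)$, $\nabla(u e^{-\alpha t}) = e^{-\alpha t}\nabla u$ and $\mathrm{div}(\vec{w} u) = \vec{w}\cdot\nabla u + u\,\mathrm{div}(\vec{w})$, the two material-derivative volume contributions cancel and what remains is $((\mathrm{div}(\vec{w})+\alpha)u, u e^{-\alpha t})_{Q^h}$. For the interface/initial/final terms I would use that both $e^{-\alpha t}$ and the discrete domain $\Omega^h(\cdot)$ are continuous in time, so that $v^n_\pm = e^{-\alpha t_n} u^n_\pm$ and $\jump{v}^n = e^{-\alpha t_n}\jump{u}^n$; then $(\jump{u}^n, v^n_+)_{\Omega^h(t_n)} - (u^n_-, \jump{v}^n)_{\Omega^h(t_n)} = e^{-\alpha t_n}(\jump{u}^n,\jump{u}^n)_{\Omega^h(t_n)}$ and, together with the $t_0$- and $t_N$-terms, these telescope to $\sum_{n=1}^{N-1} e^{-\alpha t_n}\|\jump{u}^n\|^2_{\Omega^h(t_n)} + \|u^0_+\|^2_{\Omega^h(t_0)} + e^{-\alpha T}\|u^N_-\|^2_{\Omega^h(t_N)}$. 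Bounding $e^{-\alpha t}, e^{-\alpha t_n}\ge e^{-\alpha T}$ and (in the relevant range $\alpha\ge|\mathrm{div}(\vec{w})|_\infty$) $\mathrm{div}(\vec{w})+\alpha\ge\alpha-|\mathrm{div}(\vec{w})|_\infty$ then gives
\[
(B_h + B_{mc})(u, u e^{-\alpha t}) \ge e^{-\alpha T}\big( \tripplejump{u}^2 + (\alpha - |\mathrm{div}(\vec{w})|_\infty)\|u\|_{Q^h}^2 \big).
\]

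For the mismatch term I would invoke \Cref{lemma_B_Bmc_diff} with $v = u e^{-\alpha t}$; since $uv = u^2 e^{-\alpha t}\le u^2$ this yields $|(B_h - B_{mc})(u, ue^{-\alpha t})| \le C_{\ref{lemma_B_Bmc_diff}}(h+\Delta t)\,\| u \|_{\partial_s Q^h}^2$. Applying the special trace inequality \Cref{eq_special_trace} bounds $\| u \|_{\partial_s Q^h}^2$ by $\tfrac{C_{\ref{eq_special_trace}}}{h}(\|u\|_{Q^h}^2 + hJ(u,u))$, and under \Cref{assume_h_delta_t_equiv} the factor $(h+\Delta t)/h$ is a bounded constant; hence the mismatch is at most $\tilde C(\|u\|_{Q^h}^2 + hJ(u,u))$ for a constant $\tilde C$ depending only on $C_{\ref{lemma_B_Bmc_diff}}$, $C_{\ref{eq_special_trace}}$, the $h$--$\Delta t$ equivalence constant (and $e^{\alpha T}$). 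Collecting the two contributions with $C_{\ref{lemma_B_symmetric_testing_contrib}} = \tfrac12$, absorbing the $\|u\|_{Q^h}^2$-part of the mismatch into the coefficient $\alpha - |\mathrm{div}(\vec{w})|_\infty - \tilde C_{\ref{eq_special_trace}}$, and using $hJ(u,u)\le J(u,u)$ for $h$ small to replace $-\tilde C\,hJ(u,u)$ by $-J(u,u)$, delivers \Cref{eq_B_symmetric_testing_contrib}.

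The hard part is not any single estimate but the bookkeeping in the exact identity for $B_h + B_{mc}$: one must be careful that the jump terms telescope correctly, which relies crucially on the time-continuity of $\Omega^h(\cdot)$ and of $e^{-\alpha t}$, and that the skew-symmetric material-derivative parts really cancel. The only genuinely delicate inequality is the treatment of the boundary term coming from the geometric mismatch, which after the trace inequality carries a dangerous $h^{-1}$ and is tamed only because \Cref{lemma_B_Bmc_diff} supplies a compensating $(h+\Delta t)$ factor under \Cref{assume_h_delta_t_equiv}; this is precisely the place where the added volumetric $L^2$-term in $\tripplenorm{\cdot}$ and the dual, domain-covering choice of Ghost-penalty facets pay off.
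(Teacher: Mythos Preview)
Your decomposition $B_h = \tfrac12(B_h+B_{mc}) + \tfrac12(B_h-B_{mc})$ and the exact evaluation of the symmetrised part are correct and coincide with the paper's argument. The gap is in the treatment of the boundary mismatch. By discarding the exponential weight via $u^2 e^{-\alpha t}\le u^2$ \emph{before} applying the trace inequality, you arrive at a bound $\tilde C\big(\|u\|_{Q^h}^2 + hJ(u,u)\big)$ with no $e^{-\alpha t}$ factor on the volume term. To push this inside the bracket $e^{-\alpha T}(\alpha-|\mathrm{div}(\vec w)|_\infty-\tilde C_{\ref{eq_special_trace}})\|u\|_{Q^h}^2$ you are forced to take $\tilde C_{\ref{eq_special_trace}} = e^{\alpha T}\tilde C$, as you yourself flag. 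But the very next step in the paper \emph{defines} $\alpha := |\mathrm{div}(\vec w)|_\infty + \tilde C_{\ref{eq_special_trace}} + 2$; with $\tilde C_{\ref{eq_special_trace}}$ growing like $e^{\alpha T}$ this becomes a circular equation for $\alpha$ that has no solution once $T$ is not small. So the lemma as you prove it cannot feed the downstream choice of $\alpha$.

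The paper avoids this by retaining the weight: it writes the boundary contribution as $h\|u e^{-\alpha t/2}\|_{\partial_s Q^h}^2$ and applies the \emph{non-discrete} trace inequality \Cref{eq_special_trace_nondiscrete} to $u e^{-\alpha t/2}$ (note $u e^{-\alpha t/2}\notin W_h$, so \Cref{eq_special_trace} is not directly available), followed by the \emph{scaled} spatial inverse inequality \Cref{eq_spatial_inverse_ineq_gp_scaled} with $\varphi = e^{-\alpha t/2}$. This produces $\|u e^{-\alpha t/2}\|_{Q^h}^2 + hJ(u,u)$, whose volume part carries the same weight $e^{-\alpha t}$ as the positive term $((\alpha+\mathrm{div}(\vec w))u,u e^{-\alpha t})_{Q^h}$ and can therefore be absorbed with a constant $\tilde C_{\ref{eq_special_trace}}$ genuinely independent of $\alpha$. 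Your argument is otherwise sound; only this localisation of the exponential weight needs to be repaired.
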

\begin{proof}
We start with the insight from \Cref{lemma_B_Bmc_diff} and split $B_h$ between $B_h$ and $B_{mc}$ as follows
  \begin{align*}
  2 B_h(u,u e^{-\alpha t}) \geq & B_h(u,u e^{-\alpha t}) + B_{mc}(u,u e^{-\alpha t}) - C_{\ref{lemma_B_Bmc_diff}} \cdot h \cdot \| u e^{-\alpha t/2}\|_{\partial_s Q^h}^2. 
 \end{align*}
 Observing a symmetry between the inner terms,
 \begin{align*}
  & B_h(u,u e^{-\alpha t}) + B_{mc}(u,u e^{-\alpha t}) \\
  = &\sum_{n=1}^N (\partial_t u + \mathrm{div} (\vec{w} u), e^{-\alpha t} u)_{Q^{h,n}} - (u, \partial_t(u e^{-\alpha t}) + e^{-\alpha t} \vec{w} \cdot \nabla u )_{Q^{h,n}} \\
  & + \sum_{n=1}^{N-1} (\jump{u}^n, (u^n_+ - u^n_-) e^{-\alpha t_n})_{\Omega^h(t_n)} + (u^0_+, u^0_+)_{\Omega^h(t_0)} + e^{-\alpha t_N}(u^N_-, u^N_-)_{\Omega^h(t_N)} \\
  = &\sum_{n=1}^N ( e^{-\alpha t} u, \mathrm{div} (\vec{w} u) - \vec{w} \cdot \nabla u)_{Q^{h,n}} + \alpha (e^{-\alpha t} u, u)_{Q^{h,n}} + \tripplejump{u e^{-\alpha t/2}}^2 \\
  = &\sum_{n=1}^N ( e^{-\alpha t} u, (\alpha + \mathrm{div}(\vec{w})) u)_{Q^{h,n}} + \tripplejump{u e^{-\alpha t/2}}^2.
 \end{align*}
 For the remaining boundary term, we argue as follows
 \begin{align*}
  h \cdot \| u e^{-\alpha t/2}\|_{\partial_s Q^h}^2 &\lesssim \| u e^{-\alpha t/2} \|_{\mathcal{E}(Q^h)}^2 + h^2 \| e^{-\alpha t/2} \nabla u\|_{\mathcal{E}(Q^h)}^2 \textnormal{ by } \cref{eq_special_trace_nondiscrete} \\
  & \lesssim \| u e^{-\alpha t/2} \|_{Q^h}^2 + h J(u,u) \textnormal{ by } \cref{eq_spatial_inverse_ineq_gp_scaled}.
 \end{align*}
 We can identify the $L^2(Q^h)$ volume term and note that the implicit constant in the $J$ term can be absorbed for $h$ sufficiently small.
\end{proof}
 \begin{definition}
  We choose $\alpha$ as follows
  \begin{equation}
   \alpha := |\mathrm{div}(\vec{w})|_\infty + \tilde C_{\ref{eq_special_trace}}   + 2. 
  \end{equation}

 \end{definition}
\begin{lemma} \label{lemma_B_symmetric_testing_contrib2}
It holds for $h$ sufficiently small that for all $u \in W_h$
\begin{align}
B_h(u, \Pi^{k_t} ( u e^{-\alpha t}) ) \geq \ & C_{\ref{lemma_B_symmetric_testing_contrib2}} e^{-\alpha T} \left( \tripplejump{u}^2 + \| u\|_{Q^h}^2 \right) \nonumber \\
&- C_{\ref{lemma_B_symmetric_testing_contrib2}B} \sqrt{h} (h \| D_t^h u \|_{Q^h}^2+ J(u,u)). \label{eq_B_symmetric_testing_contrib2}
 \end{align}
\end{lemma}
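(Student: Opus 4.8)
The plan is to split the test function as $\Pi^{k_t}(u e^{-\alpha t}) = u e^{-\alpha t} - r$ with $r := (\mathrm{Id}-\Pi^{k_t})(u e^{-\alpha t})$, so that
\[
 B_h(u,\Pi^{k_t}(u e^{-\alpha t})) = B_h(u, u e^{-\alpha t}) - B_h(u,r).
\]
For the first summand I would revisit the proof of \Cref{lemma_B_symmetric_testing_contrib}: there the only term involving $J$ enters through the boundary mismatch $C_{\ref{lemma_B_Bmc_diff}}\,h\,\|u e^{-\alpha t/2}\|^2_{\partial_s Q^h}$, which is bounded by $\|u\|^2_{Q^h}+h\,J(u,u)$. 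Hence, with the chosen value of $\alpha$, one in fact has $B_h(u, u e^{-\alpha t}) \ge c_0\, e^{-\alpha T}\bigl(\tripplejump{u}^2 + 2\|u\|^2_{Q^h}\bigr) - c_1\, h\, J(u,u)$, and $h\,J(u,u)\le \sqrt h\, J(u,u)$ already has the shape of the claimed defect. It then remains to absorb $-B_h(u,r)$ into $\tfrac12 c_0 e^{-\alpha T}(\tripplejump{u}^2 + 2\|u\|^2_{Q^h})$ up to a defect of the form $\sqrt h\,(h\|D_t^h u\|^2_{Q^h} + J(u,u))$, for $h$ small.

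For the volume part of $B_h(u,r)$ I would decompose $\partial_t u + \mathrm{div}(\vec w u) = D_t^h u + (D_t u - D_t^h u) + \mathrm{div}(\vec w)\,u$ and pair each summand with $r$ by Cauchy--Schwarz. The key input is the size of $r$: by the superconvergence estimate \cref{eq_superinterpol}, \Cref{assume_h_delta_t_equiv} and \cref{eq1_gp_E_IpGP},
\[
 \|r\|_{Q^h}^2 \le \|r\|_{\mathcal{E}(Q^h)}^2 \lesssim \Delta t^2\, \|u\|_{\mathcal{E}(Q^h)}^2 \lesssim h^2\bigl(h\, J(u,u) + \|u\|_{Q^h}^2\bigr).
\]
Pairing with $D_t^h u$ and using Young's inequality with the (non-standard) weights $h^{3/4}$ and $h^{1/4}$ — chosen precisely so that both pieces of the defect reappear with exactly the factor $\sqrt h$ — yields $(D_t^h u, r)_{Q^h} \lesssim \sqrt h\,(h\|D_t^h u\|_{Q^h}^2 + J(u,u)) + \sqrt h\,\|u\|_{Q^h}^2$. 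The pairing with $D_t u - D_t^h u$ is controlled by combining the bound on $\|r\|_{Q^h}$ with $h\,\|D_t u - D_t^h u\|_{Q^h}^2 \lesssim \|u\|_{Q^h}^2 + J(u,u)$ from \cref{eq_bound_diff_Dt_DtTheta}, again producing a term of the form $\sqrt h\,(J(u,u)+\|u\|_{Q^h}^2)$; and the pairing with $\mathrm{div}(\vec w)\,u$ is immediate and only produces $h$-small multiples of $\|u\|^2_{Q^h}$ together with an $h^2\,J(u,u)$ term.

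For the time-slice interface part $\sum_{n=1}^{N-1}(\jump{u}^n, r^n_+)_{\Omega^h(t_n)} + (u^0_+, r^0_+)_{\Omega^h(t_0)}$ I would estimate the one-sided traces $r^n_\pm$ of $r$ at $t_n$ (taken from the adjacent slice) by the same anisotropic trace inequality in time on $\partial I_n$ already used for \cref{bound_fixed_time_level_Pikt_Id}, i.e. $\|r^n_\pm\|_{\Omega^h(t_n)}^2 \lesssim \tfrac1{\Delta t}\|r\|_{\mathcal{E}(Q^{h,n})}^2 + \Delta t\,\|\partial_t r\|_{\mathcal{E}(Q^{h,n})}^2$, and then invoke \cref{eq_superinterpol}, \cref{eq_superinterpol_dt}, the temporal inverse estimate on the tensor-product domain underlying \cref{eq_temporal_inverse_ineq_gp}, and \cref{eq1_gp_E_IpGP} to reach $\sum_n \|r^n_\pm\|_{\Omega^h(t_n)}^2 \lesssim h\,(h\, J(u,u) + \|u\|_{Q^h}^2)$. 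Since $\sum_n\|\jump{u}^n\|_{\Omega^h(t_n)}^2 + \|u^0_+\|_{\Omega^h(t_0)}^2 \le \tripplejump{u}^2$, Cauchy--Schwarz followed by Young's inequality with a small parameter $\varepsilon$ bounds this part by $\varepsilon\,\tripplejump{u}^2 + \varepsilon^{-1} h\,(h\, J(u,u) + \|u\|_{Q^h}^2)$. Collecting all contributions and then choosing $\varepsilon$ and $h$ small enough — using that $e^{-\alpha T}$ is a fixed positive constant, independent of $h$ and $\Delta t$ — absorbs every $O(\varepsilon)\tripplejump{u}^2$ and $O(\sqrt h)\|u\|_{Q^h}^2$ term into the positive lower bound from \Cref{lemma_B_symmetric_testing_contrib} and leaves precisely a defect of the claimed form $C_{\ref{lemma_B_symmetric_testing_contrib2}B}\sqrt h\,(h\|D_t^h u\|_{Q^h}^2 + J(u,u))$.

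I expect the main obstacle to be the bookkeeping of powers of $h$: the Young's-inequality splittings must be arranged so that every error term multiplying $J(u,u)$ or $h\|D_t^h u\|^2_{Q^h}$ carries a factor $\sqrt h$ (matching the claimed defect), while every error term multiplying $\|u\|_{Q^h}^2$ or $\tripplejump{u}^2$ carries a strictly positive power of $h$ (or a free small constant) so that it can be absorbed into the positive part; this is exactly what forces the non-standard weights above. A secondary difficulty is that $r$ is discontinuous across the time-slice interfaces, so its traces must be handled with the $\Delta t$-scaled trace inequality together with both superconvergence bounds \cref{eq_superinterpol}, \cref{eq_superinterpol_dt} and the temporal Ghost-penalty/inverse estimates, rather than by a naive trace bound; and one has to use the sharper, $h$-weighted reading of the $J$-defect in \Cref{lemma_B_symmetric_testing_contrib} (not the weakened $-J(u,u)$ stated there) for the final defect to carry the factor $\sqrt h$.
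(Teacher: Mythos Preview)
Your proposal is correct and follows essentially the same route as the paper: split off $B_h(u,u e^{-\alpha t})$, then control $B_h(u,(\Pi^{k_t}-\mathrm{Id})(u e^{-\alpha t}))$ via the superconvergence bounds \cref{eq_superinterpol}, \cref{eq_superinterpol_dt}, the temporal trace inequality behind \cref{bound_fixed_time_level_Pikt_Id}, and \cref{eq1_gp_E_IpGP}, with all $\|u\|_{Q^h}^2$- and $\tripplejump{u}^2$-defects absorbed for $h$ small. The only cosmetic differences are that the paper applies Young (with parameter $h^{3/2}$ for the volume term and $h^{1/2}$ for the interface terms) \emph{before} decomposing $D_t u$ into $D_t^h u$ plus remainder, whereas you decompose first and use a free small $\varepsilon$ for the interface part; both orderings give the same defect structure, and your stated ``weights $h^{3/4}$ and $h^{1/4}$'' should read $h^{3/2}$ and $h^{-3/2}$ to match the conclusion you (correctly) claim.
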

\begin{proof}
 We follow up from the previous result and split the difference apart:
 \begin{align*}
  B_h(u,\Pi^{k_t} ( u e^{-\alpha t})) = &B_h(u,u e^{-\alpha t}) + B_h(u, ( \Pi^{k_t}- \mathrm{Id} ) (u e^{-\alpha t})).
 \end{align*}
%
 We investigate in detail
 \begin{align*}
  B_h(u, ( \Pi^{k_t}- \mathrm{Id} ) (u e^{-\alpha t})) = & \ ( \partial_t u + \mathrm{div}(\vec{w} u), (\Pi^{k_t}- \mathrm{Id}) (u e^{-\alpha t}))_{Q^h} \\
  & + \sum_{n=1}^{N-1} (\jump{u}^n, ((\Pi^{k_t}- \mathrm{Id}) (u e^{-\alpha t}))^n_+)_{\Omega^{h}(t_n)}\\
  & + (u^0_+, ((\Pi^{k_t}- \mathrm{Id}) (u e^{-\alpha t}))^0_+)_{\Omega^{h}(t_0)}.
 \end{align*}
 For the volume terms, we argue as follows with Cauchy-Schwarz and Youngs inequality
 \begin{align*}
   & |( \partial_t u + \mathrm{div}(\vec{w} u), ( \Pi^{k_t}- \mathrm{Id} ) (u e^{-\alpha t}))_{Q^h}| \leq \|\partial_t u + \mathrm{div}(\vec{w} u) \|_{Q^h} \|( \Pi^{k_t}- \mathrm{Id} ) (u e^{-\alpha t}) \|_{Q^h} \\
   &\leq { \frac{h^{3/2}}{2}} \|\partial_t u + \mathrm{div}(\vec{w} u) \|_{Q^h}^2 + {\frac{h^{-3/2}}{2}} \|( \Pi^{k_t}- \mathrm{Id} ) (u e^{-\alpha t}) \|_{Q^h}^2 \\
   &\leq \frac{h^{3/2}}{2} \| D_t u \|_{Q^h}^2 + \frac{h^{3/2}}{2} |\mathrm{div}(\vec{w})|_\infty^2 \| u \|_{Q^h}^2 + \frac{h^{-3/2}}{2} \|( \Pi^{k_t}- \mathrm{Id} ) (u e^{-\alpha t}) \|_{\mathcal{E}(Q^h)}^2 \\
   &\leq \frac{h^{3/2}}{2} \|D_t u \|_{Q^h}^2 + \left[ \frac{h^{3/2}}{2} |\mathrm{div}(\vec{w})|_\infty^2 + \frac{h^{1/2}}{2} C(\alpha)\right] \| u \|_{\mathcal{E}(Q^h)}^2 \text{ by } \cref{eq_superinterpol} \\
   &\leq \frac{h^{3/2}}{2} \|D_t^h u \|_{Q^h}^2 + C \sqrt{h} J(u,u) + \left[ \frac{h^{3/2}}{2} |\mathrm{div}(\vec{w})|_\infty^2 + \frac{h^{1/2}}{2} (1 + C(\alpha))\right] \| u \|_{\mathcal{E}(Q^h)}^2 \text{ by } \cref{eq_bound_diff_Dt_DtTheta} \\
   &\leq \frac{h^{3/2}}{2} \|D_t^h u \|_{Q^h}^2 + \frac{C_{\ref{lemma_B_symmetric_testing_contrib}}}{4} e^{-\alpha T} \| u \|_{Q^h}^2 + C \sqrt{h} J(u,u) \text{ for } h \textnormal{ sufficiently small.}
 \end{align*}
 The temporal interface terms yield a similar result
 \begin{align*}
  &\left| \sum_{n=1}^{N-1} (\jump{u}^n, ((\Pi^{k_t}- \mathrm{Id}) (u e^{-\alpha t}))^n_+)_{\Omega^{h}(t_n)} + (u^0_+, ((\Pi^{k_t}- \mathrm{Id}) (u e^{-\alpha t}))^0_+)_{\Omega^{h}(t_0)} \right| \\
  &\leq \frac{h^{1/2}}{2} \tripplejump{u}^2 + \frac{h^{-1/2}}{2} \sum_{n=0}^{N-1} \| ((\Pi^{k_t}- \mathrm{Id}) (u e^{-\alpha t}))^n_+ \|_{\Omega^{h}(t_n)}^2 \\
  &\leq \frac{h^{1/2}}{2} \tripplejump{u}^2 + \frac{h^{1/2}}{2} C(\alpha) \| u \|_{\mathcal{E}(Q^h)}^2 \text{ by } \cref{bound_fixed_time_level_Pikt_Id}.
 \end{align*}
\end{proof}

\begin{lemma}
 For all $u \in W_h$,
 \begin{equation}
  J(u, \Pi^{k_t} (e^{-\alpha t} u)) \geq \frac{14}{16} e^{-\alpha T} J(u,u) - C \| u \|_{Q^h}^2.
 \end{equation}
\end{lemma}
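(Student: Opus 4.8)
The plan is to exploit two elementary structural facts: $J(\cdot,\cdot)$ is a symmetric positive semidefinite bilinear form, and the scalar weight $e^{-\alpha t}$ depends only on time and therefore commutes with the purely spatial ghost-penalty jump operator $\jump{\cdot}_{\omega_F}$. Accordingly I would write
\[
 J(u, \Pi^{k_t}(e^{-\alpha t}u)) = J(u, e^{-\alpha t}u) + J\bigl(u,\,(\Pi^{k_t}-\mathrm{Id})(e^{-\alpha t}u)\bigr),
\]
show that the first contribution is bounded below by $e^{-\alpha T}J(u,u)$, and show that the second is an $O(\sqrt h)$ perturbation that can be absorbed.

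For the first term, fix a facet $F\in\mathcal{F}^n$ with patch $\omega_F = T_1\cup T_2$ and a time $t\in I_n$. Since $e^{-\alpha t}$ is constant in $x$, the spatial polynomial extension obeys $\mathcal{E}^p(e^{-\alpha t}u|_{T_{3-i}}) = e^{-\alpha t}\,\mathcal{E}^p(u|_{T_{3-i}})$, and hence $\jump{e^{-\alpha t}u}_{\omega_F} = e^{-\alpha t}\jump{u}_{\omega_F}$. Therefore $j_F^n(u,e^{-\alpha t}u) = \int_{t_{n-1}}^{t_n} e^{-\alpha t}\int_{\omega_F}\gamma_J h^j |\jump{u}_{\omega_F}|^2\,\mathrm{d}x\,\mathrm{d}t \geq e^{-\alpha T} j_F^n(u,u)$, using $e^{-\alpha t}\geq e^{-\alpha T}$ on $[0,T]$ and the nonnegativity of the integrand. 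Summing over all $F\in\mathcal{F}^n$ and over $n$ yields $J(u,e^{-\alpha t}u)\geq e^{-\alpha T}J(u,u)$.

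For the error term I would first apply the Cauchy--Schwarz inequality for the semidefinite form $J$, $|J(u,v)|\leq \|u\|_J\|v\|_J$, and then bound $\|(\Pi^{k_t}-\mathrm{Id})(e^{-\alpha t}u)\|_J$ along the same lines as the estimate \cref{eq_J_norm_of_Pikt_ue} preceding this lemma: the integral over the facet patches is controlled by the global $L^2$ norm over $\mathcal{E}(Q^h)$ with scaling $h^j$; next the superconvergence bound \cref{eq_superinterpol} with $\varphi = e^{-\alpha t}$, summed over the time slices and combined with \Cref{assume_h_delta_t_equiv}, gives $\|(\Pi^{k_t}-\mathrm{Id})(e^{-\alpha t}u)\|_J^2 \lesssim C(\alpha)\,h^{j+2}\|u\|_{\mathcal{E}(Q^h)}^2$; and finally \cref{eq1_gp_E_IpGP} converts this, for $j=-1$, into $\|(\Pi^{k_t}-\mathrm{Id})(e^{-\alpha t}u)\|_J^2 \lesssim h^2 J(u,u) + h\|u\|_{Q^h}^2$. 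Consequently $|J(u,(\Pi^{k_t}-\mathrm{Id})(e^{-\alpha t}u))| \lesssim \|u\|_J\bigl(h\|u\|_J + \sqrt h\,\|u\|_{Q^h}\bigr) = h\,J(u,u) + \sqrt h\,\|u\|_J\|u\|_{Q^h}$.

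To conclude I would apply Young's inequality to the cross term $\sqrt h\,\|u\|_J\|u\|_{Q^h}$ with a parameter calibrated to $e^{-\alpha T}$, and then choose $h$ small enough so that both the residual $h\,J(u,u)$ and the $J(u,u)$-part produced by Young are each at most $\tfrac1{16}e^{-\alpha T}J(u,u)$; the remaining multiple of $\|u\|_{Q^h}^2$ is absorbed into the $-C\|u\|_{Q^h}^2$ of the claim, with $C$ depending only on $\alpha$ and $T$. Together with $J(u,e^{-\alpha t}u)\geq e^{-\alpha T}J(u,u)$ this gives $J(u,\Pi^{k_t}(e^{-\alpha t}u)) \geq \bigl(1-\tfrac2{16}\bigr)e^{-\alpha T}J(u,u) - C\|u\|_{Q^h}^2$, which is the assertion. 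The only real difficulty I anticipate is the bookkeeping of absolute constants --- tracking $C(\alpha)$ through \cref{eq_superinterpol} and \cref{eq1_gp_E_IpGP} and tuning the Young parameter and the smallness threshold on $h$ so that precisely the factor $\tfrac{14}{16}$ remains; the commutation of $e^{-\alpha t}$ past the ghost-penalty jump, which is what makes the leading term come out with the clean constant $e^{-\alpha T}$, is conceptually the crux but is immediate once one notes that $\jump{\cdot}_{\omega_F}$ acts only in the spatial variable.
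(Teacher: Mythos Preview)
Your proof is correct and follows essentially the same approach as the paper: split off $J(u,e^{-\alpha t}u)\geq e^{-\alpha T}J(u,u)$ using that the ghost-penalty jump is purely spatial, then control $J(u,(\mathrm{Id}-\Pi^{k_t})(e^{-\alpha t}u))$ via the patch-to-$L^2$ bound, the superconvergence estimate \cref{eq_superinterpol}, and \cref{eq1_gp_E_IpGP}, absorbing the residual $J(u,u)$ contributions for $h$ small. The only cosmetic difference is that the paper applies Young's inequality directly to the bilinear pairing with explicit constants $\tfrac1{16}e^{-\alpha T}$ and $4e^{\alpha T}$, whereas you pass through Cauchy--Schwarz first and then Young on the scalar cross term; these are equivalent manipulations.
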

\begin{proof}
 We start with the observation that
 \begin{equation}
  J(u, e^{-\alpha t} u) \geq e^{-\alpha T} J(u,u),
 \end{equation}
 as the weighting $e^{-\alpha t}$ will just operate as a temporal scaling inside an integral over time of spatial Ghost penalty norm terms. It is always greater equal $e^{- \alpha T}$, hence the estimate holds.
 
 For the remaining difference, we estimate
 \begin{align*}
  J(u, (Id - \Pi^{k_t}) (e^{-\alpha t} u)) &\leq \frac{1}{16} e^{-\alpha T} J(u,u) + 4 e^{\alpha T} \| (Id - \Pi^{k_t}) (e^{-\alpha t} u)\|_J^2 \\
  &\leq \frac{1}{16} e^{-\alpha T} J(u,u) + C e^{\alpha T} h^j \| (Id - \Pi^{k_t}) (e^{-\alpha t} u)\|_{\mathcal{E}(Q^h)}^2 \\
  &\leq \frac{1}{16} e^{-\alpha T} J(u,u) + e^{\alpha T} C(\alpha) h \| u\|_{\mathcal{E}(Q^h)}^2 \text{ by } \cref{eq_superinterpol} \\
  &\leq \frac{2}{16} e^{-\alpha T} J(u,u) + e^{\alpha T} C(\alpha) h \| u\|_{Q^h}^2,
 \end{align*}
 for $h$ sufficiently small.
\end{proof}

\begin{corollary} \label{cor_symm_testing}
 It holds $\forall u \in W_h$, and $h$ and $\Delta t$ sufficiently small
 \begin{align}
  B_h(u, \Pi^{k_t} ( u e^{-\alpha t}) ) + J(u, \Pi^{k_t} (e^{-\alpha t} u)) \geq \ & C_{\ref{cor_symm_testing}} e^{-\alpha T} \left( \tripplejump{u}^2 + \| u\|_{Q^h}^2 + J(u,u) \right) \nonumber \\
&- C_{\ref{cor_symm_testing}B} h^{3/2} \| D_t^h u\|_{Q^h}^2. \label{eq_mixed_testing}
 \end{align}
\end{corollary}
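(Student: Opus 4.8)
The plan is to obtain \Cref{eq_mixed_testing} by simply summing the two immediately preceding results and then absorbing the sign-indefinite remainder terms into the newly-available positive $\|u\|_{Q^h}^2$ and $J(u,u)$ contributions, which becomes legitimate once $h$ is taken small enough.

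First I would apply \Cref{lemma_B_symmetric_testing_contrib2} in the rewritten form
\begin{equation*}
 B_h(u, \Pi^{k_t}(u e^{-\alpha t})) \geq C_{\ref{lemma_B_symmetric_testing_contrib2}} e^{-\alpha T}\left(\tripplejump{u}^2 + \|u\|_{Q^h}^2\right) - C_{\ref{lemma_B_symmetric_testing_contrib2}B}\left(h^{3/2}\|D_t^h u\|_{Q^h}^2 + \sqrt{h}\,J(u,u)\right),
\end{equation*}
together with the preceding lemma on symmetric testing of the Ghost penalty, namely $J(u, \Pi^{k_t}(e^{-\alpha t}u)) \geq \tfrac{14}{16} e^{-\alpha T} J(u,u) - C\|u\|_{Q^h}^2$, where an inspection of that proof shows that the constant multiplying $\|u\|_{Q^h}^2$ is in fact of the form $e^{\alpha T} C(\alpha)\,h$, hence $O(h)$. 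Adding the two inequalities and collecting terms yields
\begin{equation*}
 B_h(u, \Pi^{k_t}(u e^{-\alpha t})) + J(u, \Pi^{k_t}(e^{-\alpha t}u)) \geq C_{\ref{lemma_B_symmetric_testing_contrib2}} e^{-\alpha T}\tripplejump{u}^2 + \big(C_{\ref{lemma_B_symmetric_testing_contrib2}} e^{-\alpha T} - Ch\big)\|u\|_{Q^h}^2 + \big(\tfrac{14}{16}e^{-\alpha T} - C_{\ref{lemma_B_symmetric_testing_contrib2}B}\sqrt{h}\big)J(u,u) - C_{\ref{lemma_B_symmetric_testing_contrib2}B}\,h^{3/2}\|D_t^h u\|_{Q^h}^2.
\end{equation*}

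The final step is purely a smallness argument: using \Cref{assume_h_delta_t_equiv} to fix $h$ (and hence $\Delta t$) small enough that $Ch \leq \tfrac12 C_{\ref{lemma_B_symmetric_testing_contrib2}} e^{-\alpha T}$ and $C_{\ref{lemma_B_symmetric_testing_contrib2}B}\sqrt{h} \leq \tfrac{7}{16} e^{-\alpha T}$, both the $\|u\|_{Q^h}^2$ and $J(u,u)$ coefficients remain bounded below by a positive multiple of $e^{-\alpha T}$, so one may set $C_{\ref{cor_symm_testing}} := \min\{\tfrac12 C_{\ref{lemma_B_symmetric_testing_contrib2}}, \tfrac{7}{16}\}$ and $C_{\ref{cor_symm_testing}B} := C_{\ref{lemma_B_symmetric_testing_contrib2}B}$ to read off \Cref{eq_mixed_testing}. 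Note that the term $h^{3/2}\|D_t^h u\|_{Q^h}^2$ is carried through unchanged, as it already has the form demanded by the statement.

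There is essentially no deep obstacle here; this corollary is a bookkeeping consequence of the previous two lemmas. The one point deserving explicit mention is that the constant standing in front of $\|u\|_{Q^h}^2$ in the Ghost-penalty symmetric-testing lemma is itself $O(h)$ — were it an $h$-independent constant, the positive $\|u\|_{Q^h}^2$ contribution coming from \Cref{lemma_B_symmetric_testing_contrib2} could not be guaranteed to survive — and that the two smallness thresholds on $h$ imposed above are mutually compatible, since each is merely an upper bound.
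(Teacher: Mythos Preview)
Your argument is correct and matches the intended derivation: the corollary is stated without proof in the paper precisely because it follows by adding \Cref{lemma_B_symmetric_testing_contrib2} and the subsequent Ghost-penalty lemma, then absorbing the $\sqrt{h}\,J(u,u)$ and $O(h)\,\|u\|_{Q^h}^2$ remainders for $h$ small. Your explicit remark that the constant in front of $\|u\|_{Q^h}^2$ in the Ghost-penalty lemma is in fact $O(h)$ (visible only from its proof, not its statement) is exactly the point that makes the bookkeeping close.
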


\subsection{Testing with $D_t u$}
\begin{lemma} \label{testing_u_Dtu_lemma}
 It holds for all $u \in W_h$ 
 \begin{equation}
  B(u, h D_t^h u) + J(u, h D_t^h u) \geq C_{\ref{testing_u_Dtu_lemma}A} h \| D_t^h u \|_{Q^h}^2 - C_{\ref{testing_u_Dtu_lemma}B} \left( \tripplejump{u}^2 + \| u \|^2_{Q^h} + J(u,u) \right). \label{eq_testing_u_Dtu}
 \end{equation}
\end{lemma}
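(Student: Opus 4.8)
The plan is to test the bilinear form with $v = h\, D_t^h u$ (which lies in $W_h$ since $D_t^h W_h \subseteq W_h$), to extract the positive square $h\|D_t^h u\|_{Q^h}^2$ from the volume part of $B_h$, and then to dominate every remaining contribution by either a small multiple of $h\|D_t^h u\|_{Q^h}^2$ (to be absorbed back on the left) or by the admissible quantity $\tripplejump{u}^2 + \|u\|_{Q^h}^2 + J(u,u)$. Throughout, $B$ in the statement is the form $B_h$ of \eqref{eq_def_Bh}.

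First I would expand $B_h$ by \eqref{eq_def_Bh}, rewrite the volume integrand as $\partial_t u + \mathrm{div}(\vec{w}u) = D_t u + \mathrm{div}(\vec{w})\,u$, and split $D_t u = D_t^h u + (D_t u - D_t^h u)$, so that
\begin{align*}
 (\partial_t u + \mathrm{div}(\vec{w}u),\, h D_t^h u)_{Q^h} = h\|D_t^h u\|_{Q^h}^2 + h(D_t u - D_t^h u,\, D_t^h u)_{Q^h} + h(\mathrm{div}(\vec{w})\,u,\, D_t^h u)_{Q^h}.
\end{align*}
The last two inner products are handled by Cauchy--Schwarz and a weighted Young inequality: the middle one uses $h\|D_t u - D_t^h u\|_{Q^h}^2 \lesssim \|u\|_{Q^h}^2 + J(u,u)$ from \eqref{eq_bound_diff_Dt_DtTheta}, and the last one uses $h\,|\mathrm{div}(\vec{w})|_\infty^2\|u\|_{Q^h}^2 \lesssim \|u\|_{Q^h}^2$, each time paying an $\varepsilon\, h\|D_t^h u\|_{Q^h}^2$ that will be absorbed at the end.

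Next I would treat the time--slice jump terms $\sum_{n=1}^{N-1}(\jump{u}^n, (h D_t^h u)^n_+)_{\Omega^h(t_n)} + (u^0_+, (h D_t^h u)^0_+)_{\Omega^h(t_0)}$. Cauchy--Schwarz in $\ell^2$ over $n$ bounds the first factors by $\tripplejump{u}$ and the second by $\big(h^2\sum_{n=0}^{N-1}\|(D_t^h u)^n_+\|_{\Omega^h(t_n)}^2\big)^{1/2}$; applying \Cref{lemma_bound_tn_ag_temporal_vol} with $u$ replaced by $D_t^h u$, together with $j=-1$, \Cref{assume_h_delta_t_equiv}, and the already established bound \eqref{eq_J_Dt_u_estimate} (which gives $h^2 J(D_t^h u, D_t^h u)\lesssim J(u,u)$), this second factor is $\lesssim \big(J(u,u) + h\|D_t^h u\|_{Q^h}^2\big)^{1/2}$. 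A Young inequality with a small parameter $\delta$ then controls the jump terms by $C_\delta\tripplejump{u}^2 + \delta\big(J(u,u) + h\|D_t^h u\|_{Q^h}^2\big)$. For the stabilisation contribution I would use that $J(\cdot,\cdot)$ is positive semidefinite, so $|J(u, h D_t^h u)| \le J(u,u)^{1/2}\, h\,\|D_t^h u\|_J \lesssim J(u,u)$, again by \eqref{eq_J_Dt_u_estimate}.

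Collecting everything gives
\[
 B(u, h D_t^h u) + J(u, h D_t^h u) \ge (1 - 2\varepsilon - \delta)\,h\|D_t^h u\|_{Q^h}^2 - C\big(\tripplejump{u}^2 + \|u\|_{Q^h}^2 + J(u,u)\big),
\]
and choosing $\varepsilon,\delta$ small enough (e.g. $2\varepsilon + \delta \le \tfrac12$) yields \eqref{eq_testing_u_Dtu} with $C_{\ref{testing_u_Dtu_lemma}A}=\tfrac12$ and a suitable $C_{\ref{testing_u_Dtu_lemma}B}$. The only genuinely delicate point is the jump term: the boundary values $(D_t^h u)^n_\pm$ of the \emph{derived} discrete function $D_t^h u$ must be traded back for volumetric and Ghost--penalty norms of $u$ via \Cref{lemma_bound_tn_ag_temporal_vol} and \eqref{eq_J_Dt_u_estimate}; the rest is routine Cauchy--Schwarz and Young bookkeeping plus absorption of the accumulated $h\|D_t^h u\|_{Q^h}^2$ terms.
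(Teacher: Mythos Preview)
Your proposal is correct and follows essentially the same route as the paper: extract $h\|D_t^h u\|_{Q^h}^2$ from the volume term, control the $(D_t u - D_t^h u)$ and $\mathrm{div}(\vec w)u$ remainders via \eqref{eq_bound_diff_Dt_DtTheta} and Young, handle the jump terms by Cauchy--Schwarz plus \Cref{lemma_bound_tn_ag_temporal_vol} and \eqref{eq_J_Dt_u_estimate}, and bound $J(u,hD_t^h u)$ by Cauchy--Schwarz and \eqref{eq_J_Dt_u_estimate}. The paper's argument is identical up to the way the Young parameters are organised.
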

\begin{proof}
 The Ghost-penalty summand is first estimated by Cauchy-Schwarz and the insight from \Cref{eq_J_Dt_u_estimate}:
 \begin{equation}
  J(u, h D_t^h u) \geq - \sqrt{J(u,u)} \sqrt{J(h D_t^h u,h D_t^h u)} \gtrsim - J(u,u).
 \end{equation}
 For the $B$ summand, we observe
 \begin{align*}
  B(u, h D_t^h u) = & h (\partial_t u + \vec{w} \nabla u +\mathrm{div}(\vec{w}) u, D_t^h u)_{Q^h} \\& + h \sum_{n=1}^{N-1} (\jump{u}^n, (D_t^h u)^n_+)_{\Omega^{h}(t_n)} + h (u^0_+, (D_t^h u)^0_+)_{\Omega^{h}(t_0)}.
 \end{align*}
 Looking at $h (\partial_t u + \vec{w} \nabla u +\mathrm{div}(\vec{w}) u, D_t^h u)_{Q^h}$, we focus on deviations in $D_t u$ and $D_t^h u$:
 \begin{align*}
  h (\partial_t u + \vec{w} \nabla u &+\mathrm{div}(\vec{w}) u, D_t^h u)_{Q^h} = h \| D_t^h u \|_{Q^h}^2 + h ( D_t u - D_t^h u + \mathrm{div}(\vec{w}) u, D_t^h u)_{Q^h} \\
  &\gtrsim h \| D_t^h u \|_{Q^h}^2 - h \left( \frac{1}{2 \epsilon} \| D_t u - D_t^h u + \mathrm{div}(\vec{w}) u\|_{Q^h}^2+ \frac{\epsilon}{2}  \| D_t^h u\|_{Q^h}^2 \right) \\
  &\gtrsim h \| D_t^h u \|_{Q^h}^2 - h \| D_t u - D_t^h u \|_{Q^h}^2 - h \| \mathrm{div}(\vec{w}) u\|_{Q^h}^2 \\
  &\gtrsim h \| D_t^h u \|_{Q^h}^2 - (1 + h |\mathrm{div}(\vec{w})|_\infty) \| u \|_{Q^h}^2 - J(u,u) \text{ by } \cref{eq_bound_diff_Dt_DtTheta}.
 \end{align*}
 
 For the boundary terms, we observe by Cauchy-Schwarz
 \begin{align*}
  &\left| h \sum_{n=1}^{N-1} (\jump{u}^n, (D_t^h u)^n_+)_{\Omega^{h}(t_n)} + h (u^0_+, (D_t^h u)^0_+)_{\Omega^{h}(t_0)} \right| \\
  &\leq \tripplejump{u} \cdot \left( \sum_{n=0}^{N-1} h \|(D_t^h u)^n_+\|_{\Omega^{h}(t_n)} \right) \\
  &\leq \frac{1}{2 \epsilon_2} \tripplejump{u}^2 + \frac{\epsilon_2 }{2} \left( \sum_{n=0}^{N-1} h^2 \|(D_t^h u)^n_+\|_{\Omega^{h}(t_n)}^2 \right) \\
  &\lesssim \frac{1}{\epsilon_2} \tripplejump{u}^2 + \epsilon_2h^2 J(D_t^h u, D_t^h u) + \epsilon_2h \|D_t^h u\|^2_{Q^h} \text{ by } \cref{eq_jump_Dt_u_Gp_est} \\
&\lesssim \frac{1}{\epsilon_2} \tripplejump{u}^2 + \epsilon_2 J(u, u) + \epsilon_2 h \|D_t^h u\|^2_{Q^h} \text{ by } \cref{eq_J_Dt_u_estimate}.
 \end{align*}
 We can chose $\epsilon_2$ so that any such negative contributions in $h \| D_t^h u \|_{Q_h}^2$ are sufficiently small as measured against the related positive summand. This establishes the claim.
\end{proof}

\subsection{Stability}
\begin{theorem} \label{thm_stability}
 The bilinear form $B_h + J$ is inf-sup stable on $W_h$, i.e. for all $u \in W_h$, there exists a $\tilde v(u) \in W_h$ such that
 \begin{equation}
  B_h(u, \tilde v(u)) + J(u, \tilde v(u)) \gtrsim e^{-\alpha T} \cdot \tripplenorm{u}_j \cdot \tripplenorm{\tilde v(u)}_j.
 \end{equation}
 Hence, the problem \cref{eq_discrete_problem} is well-posed, i.e. it has a unique discrete solution.
\end{theorem}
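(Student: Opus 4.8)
The plan is to test with a linear combination of the two ``elementary'' directions whose effect has already been quantified, namely $\tilde v(u) := \Pi^{k_t}(u e^{-\alpha t}) + \varepsilon\, h D_t^h u$ for a small fixed parameter $\varepsilon>0$. By bilinearity,
\begin{equation*}
 B_h(u,\tilde v(u)) + J(u,\tilde v(u)) = \bigl[B_h(u,\Pi^{k_t}(u e^{-\alpha t})) + J(u,\Pi^{k_t}(e^{-\alpha t} u))\bigr] + \varepsilon\bigl[B_h(u, h D_t^h u) + J(u, h D_t^h u)\bigr].
\end{equation*}
I would bound the first bracket below by \Cref{cor_symm_testing} and the second by \Cref{testing_u_Dtu_lemma}. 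The first bracket yields the positive contributions $e^{-\alpha T}(\tripplejump{u}^2 + \| u\|_{Q^h}^2 + J(u,u))$ together with a negative $h^{3/2}\|D_t^h u\|_{Q^h}^2$; the second, scaled by $\varepsilon$, yields the positive contribution $h\|D_t^h u\|_{Q^h}^2$ together with a negative multiple of $\tripplejump{u}^2 + \| u\|_{Q^h}^2 + J(u,u)$. Thus each of the two test directions repairs precisely the deficiency of the other.

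Adding the two estimates, the coefficient multiplying $\tripplejump{u}^2 + \| u\|_{Q^h}^2 + J(u,u)$ is $C_{\ref{cor_symm_testing}} e^{-\alpha T} - \varepsilon C_{\ref{testing_u_Dtu_lemma}B}$, and the coefficient multiplying $\|D_t^h u\|_{Q^h}^2$ is $\varepsilon C_{\ref{testing_u_Dtu_lemma}A}\, h - C_{\ref{cor_symm_testing}B}\, h^{3/2} = h\,(\varepsilon C_{\ref{testing_u_Dtu_lemma}A} - C_{\ref{cor_symm_testing}B}\sqrt h)$. I would first choose $\varepsilon$ small enough that $\varepsilon C_{\ref{testing_u_Dtu_lemma}B} \le \tfrac12 C_{\ref{cor_symm_testing}} e^{-\alpha T}$ (legitimate, since $e^{-\alpha T}$ is a fixed constant, and $\lesssim$ is allowed to hide dependence on $T$ and $\alpha$), and then, with $\varepsilon$ frozen, take $h$ small enough that $C_{\ref{cor_symm_testing}B}\sqrt h \le \tfrac12 \varepsilon C_{\ref{testing_u_Dtu_lemma}A}$. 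This gives
\begin{equation*}
 B_h(u,\tilde v(u)) + J(u,\tilde v(u)) \gtrsim e^{-\alpha T}\bigl(\tripplejump{u}^2 + \| u\|_{Q^h}^2 + J(u,u)\bigr) + h\,\|D_t^h u\|_{Q^h}^2 .
\end{equation*}
To reinsert the material-derivative summand of $\tripplenorm{\cdot}_j$, I would use the triangle inequality together with \Cref{eq_bound_diff_Dt_DtTheta}, namely $h\|D_t u\|_{Q^h}^2 \lesssim h\|D_t^h u\|_{Q^h}^2 + h\|D_t u - D_t^h u\|_{Q^h}^2 \lesssim h\|D_t^h u\|_{Q^h}^2 + \|u\|_{Q^h}^2 + J(u,u)$; a further smallness requirement on $\varepsilon$ (again only relative to $e^{-\alpha T}$) absorbs the $\|u\|_{Q^h}^2$ and $J(u,u)$ leaked in this conversion. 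Since all four summands of $\tripplenorm{u}_j^2$ are then controlled and $e^{-\alpha T}\le 1$, one arrives at $B_h(u,\tilde v(u)) + J(u,\tilde v(u)) \gtrsim e^{-\alpha T}\tripplenorm{u}_j^2$.

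Finally, the triangle inequality together with \Cref{eq_Pikt_ue_tripple_norm_bnd_tripple_norm_u} and \Cref{eq_hDtu_tripplenorm_bnd_u} gives $\tripplenorm{\tilde v(u)}_j \le \tripplenorm{\Pi^{k_t}(u e^{-\alpha t})}_j + \varepsilon\,\tripplenorm{h D_t^h u}_j \lesssim \tripplenorm{u}_j$, hence $\tripplenorm{u}_j \gtrsim \tripplenorm{\tilde v(u)}_j$ and therefore
\begin{equation*}
 B_h(u,\tilde v(u)) + J(u,\tilde v(u)) \gtrsim e^{-\alpha T}\tripplenorm{u}_j^2 \gtrsim e^{-\alpha T}\,\tripplenorm{u}_j\,\tripplenorm{\tilde v(u)}_j ,
\end{equation*}
which is the claimed inf-sup bound. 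Well-posedness of \Cref{eq_discrete_problem} then follows from finite dimensionality of $W_h$: the inf-sup estimate forces $u\mapsto B_h(u,\cdot)+J(u,\cdot)$ to be injective, hence bijective, so the discrete solution exists and is unique. I expect the genuinely delicate point to be not any single estimate — the substantial analytic content (the Ghost-penalty inverse inequalities, the discrete-commutator/superconvergence bounds, the mass-conservation mismatch \Cref{lemma_B_Bmc_diff} and the special trace inequality) has all been front-loaded into the preceding lemmas — but the bookkeeping of the smallness thresholds: the negative $h^{3/2}\|D_t^h u\|_{Q^h}^2$ coming from the $\Pi^{k_t}$-consistency error must be absorbed into the positive $\varepsilon h\|D_t^h u\|_{Q^h}^2$, which works only because of the extra factor $\sqrt h$, and this forces the order ``fix $\varepsilon$ first, then shrink $h$'' rather than the reverse.
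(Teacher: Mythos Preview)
Your proof is correct and follows essentially the same route as the paper: both combine the two test directions $\Pi^{k_t}(ue^{-\alpha t})$ and $hD_t^h u$ and balance the resulting positive and negative contributions from \Cref{cor_symm_testing} and \Cref{testing_u_Dtu_lemma}. The only cosmetic difference is the parameterisation --- the paper fixes the coefficient of $hD_t^h u$ at $e^{-\alpha T}$ and scales the symmetric direction up by a large constant $C = (C_{\ref{testing_u_Dtu_lemma}A}+C_{\ref{testing_u_Dtu_lemma}B})/C_{\ref{cor_symm_testing}}$, whereas you fix the symmetric direction and scale $hD_t^h u$ down by a small $\varepsilon\sim e^{-\alpha T}$ --- and you spell out the $D_t^h\to D_t$ conversion via \Cref{eq_bound_diff_Dt_DtTheta} that the paper absorbs silently into its final ``for $h$ suff.\ small'' step.
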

\begin{proof}
 Let $u \in W_h$ be given. We decide to consider $\tilde v(u) = C \cdot \Pi^{k_t}( u e^{-\alpha t}) + e^{-\alpha T} h D_t^h u$, where $C = \frac{C_{\ref{testing_u_Dtu_lemma}A} + C_{\ref{testing_u_Dtu_lemma}B}}{C_{\ref{cor_symm_testing}} }$.
 
 First, we can obtain from \cref{eq_hDtu_tripplenorm_bnd_u} and \cref{eq_Pikt_ue_tripple_norm_bnd_tripple_norm_u} that $\tripplenorm{ \tilde v(u)}_j \lesssim \tripplenorm{u}_j$. Hence, it suffices to show
 \begin{align*}
  B_h(u, \tilde v(u)) + J(u, \tilde v(u)) \gtrsim e^{-\alpha T} \cdot \tripplenorm{u}_j^2.
 \end{align*}
 To establish this, we start by linearity
 \begin{align*}
  & B_h(u, \tilde v(u)) + J(u, \tilde v(u)) \\
  = \ & C \left[ B_h(u, \Pi^{k_t}( u e^{-\alpha t})) + J(u, \Pi^{k_t}( u e^{-\alpha t}))\right] + e^{-\alpha T} \left[ B_h(u, h D_t^h u) + J(u, h D_t^h u) \right] \\
  \geq \ & CC_{\ref{cor_symm_testing}} e^{-\alpha T} \left( \tripplejump{u}^2 + \| u\|_{Q^h}^2 + J(u,u) \right) - C C_{\ref{cor_symm_testing}B} h^{3/2} \| D_t^h u \|_{Q^h}^2 \text{ by } \cref{eq_mixed_testing}, \cref{eq_testing_u_Dtu} \\
 & + e^{-\alpha T} C_{\ref{testing_u_Dtu_lemma}A} h \| D_t^h u \|_{Q^h}^2 - e^{-\alpha T} C_{\ref{testing_u_Dtu_lemma}B} \left( \tripplejump{u}^2 +  \| u \|^2_{Q^h} + J(u,u)\right).
 \end{align*}
 With the choice of $C$ mentioned above, we obtain
 \begin{align*}
 & B_h(u, \tilde v(u)) + J(u, \tilde v(u)) \\
 \geq \ & (C_{\ref{testing_u_Dtu_lemma}A} + C_{\ref{testing_u_Dtu_lemma}B}) e^{-\alpha T} \left( \tripplejump{u}^2 + \| u\|_{Q^h}^2 + J(u,u) \right) - C C_{\ref{cor_symm_testing}B} h^{3/2} \| D_t^h u \|_{Q^h}^2 \\
 & + e^{-\alpha T} C_{\ref{testing_u_Dtu_lemma}A} h \| D_t^h u \|_{Q^h}^2 - e^{-\alpha T} C_{\ref{testing_u_Dtu_lemma}B} \left( \tripplejump{u}^2 + \| u \|^2_{Q^h} + J(u,u)  \right) \\
 \geq \ & C_{\ref{testing_u_Dtu_lemma}A} e^{-\alpha T} \left( \tripplejump{u}^2 + \| u\|_{Q^h}^2 + J(u,u) +  h \| D_t^h u \|_{Q^h}^2\right) - C C_{\ref{cor_symm_testing}B} h^{3/2} \| D_t^h u\|_{Q^h}^2 \\
\geq \ & C e^{- \alpha T} \tripplenorm{u}_j^2, \text{ for } h \text{ suff. small.}
 \end{align*}
\end{proof}

\subsection{Continuity}
For the purposes of showing conituity, we introduce the following larger norm
\begin{align}
  \tripplenorm{u}^2_\ast &:= \tripplenorm{u}^2 + \frac{1}{h} \| u \|_{\mathcal{E}(Q^h)}^2 + h \| \nabla u \|_{\mathcal{E}(Q^h)}^2 +\sum_{n=1}^N \| u^n_-\|_{\Omega^h(t_n)}^2\\
 \tripplenorm{u}^2_{\ast, j} &:= \tripplenorm{u}^2_\ast + J(u,u).
\end{align}
In terms of this norm, we can show
\begin{lemma} \label{lemma_continuity}
 For all $u \in W_h + H^1(Q^h)$ and $v \in W_h$, it holds
 \begin{equation}
  B(u,v) \lesssim \tripplenorm{u}_\ast \tripplenorm{v}.
 \end{equation}
\end{lemma}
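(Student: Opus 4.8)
The obstruction to a direct termwise Cauchy--Schwarz bound of $B(u,v) = B_h(u,v)$ is the volume contribution $(\partial_t u + \mathrm{div}(\vec w u), v)_{Q^h}$: after writing $\partial_t u + \mathrm{div}(\vec w u) = D_t u + \mathrm{div}(\vec w)\, u$, the summand $(D_t u, v)_{Q^h}$ would have to be paired as $\|D_t u\|_{Q^h}\,\|v\|_{Q^h}$, but $\tripplenorm{u}_\ast$ only supplies $h^{1/2}\|D_t u\|_{Q^h}$, leaving an uncompensated factor $h^{-1/2}$; likewise the upwind terms $(\jump{u}^n, v^n_+)_{\Omega^h(t_n)}$ would require control of $\|v^n_+\|_{\Omega^h(t_n)}$ at interior $n$, which is not available in $\tripplenorm{v}$. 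Both difficulties disappear once the material derivative is transferred from $u$ onto the discrete test function $v$, so the plan is to start from the identity $B(u,v) = B_{mc}(u,v) + \bigl(B(u,v) - B_{mc}(u,v)\bigr)$ and estimate the two summands separately.

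For $B_{mc}(u,v)$, recalling its definition \cref{eq_intro_Bmc}, a termwise Cauchy--Schwarz is now scale-balanced: $|(u, -\partial_t v - \vec w\cdot\nabla v)_{Q^h}| \le \|u\|_{Q^h}\,\|D_t v\|_{Q^h} \le \bigl(h^{-1/2}\|u\|_{\mathcal E(Q^h)}\bigr)\bigl(h^{1/2}\|D_t v\|_{Q^h}\bigr) \lesssim \tripplenorm{u}_\ast\,\tripplenorm{v}$, using that $h^{-1}\|u\|_{\mathcal E(Q^h)}^2$ is a summand of $\tripplenorm{u}_\ast^2$ and $h\|D_t v\|_{Q^h}^2$ one of $\tripplenorm{v}^2$; the interface terms $\sum_{n=1}^{N-1}(u^n_-, \jump{v}^n)_{\Omega^h(t_n)}$ and $(u^N_-, v^N_-)_{\Omega^h(t_N)}$ are bounded, after a discrete Cauchy--Schwarz in $n$, by $\bigl(\sum_{n}\|u^n_-\|_{\Omega^h(t_n)}^2\bigr)^{1/2}\tripplejump{v} \lesssim \tripplenorm{u}_\ast\,\tripplenorm{v}$, since $\sum_n\|u^n_-\|_{\Omega^h(t_n)}^2$ is a summand of $\tripplenorm{u}_\ast^2$ and $\tripplejump{v} \le \tripplenorm{v}$.

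For the geometric mismatch, \Cref{lemma_B_Bmc_diff} yields $|B(u,v) - B_{mc}(u,v)| \lesssim (h+\Delta t)\,\|u\|_{\partial_s Q^h}\,\|v\|_{\partial_s Q^h}$ after a Cauchy--Schwarz on the boundary integral. Here I would use the continuous special trace inequality \cref{eq_special_trace_nondiscrete}, which gives $\|u\|_{\partial_s Q^h}^2 \lesssim h^{-1}\|u\|_{\mathcal E(Q^h)}^2 + h\|\nabla u\|_{\mathcal E(Q^h)}^2 \le \tripplenorm{u}_\ast^2$ — both terms on the right are, by construction, summands of $\tripplenorm{u}_\ast^2$ — and the discrete special trace inequality \cref{eq_special_trace} for $\|v\|_{\partial_s Q^h}$; the prefactor $h+\Delta t$, together with \cref{assume_h_delta_t_equiv}, absorbs the negative power of $h$ coming from the discrete trace bound, so that this summand is again $\lesssim \tripplenorm{u}_\ast\,\tripplenorm{v}$. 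Adding the two contributions gives the assertion.

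The step I expect to be most delicate is this last one, the $h$-power bookkeeping in the mismatch term: it is precisely there that the second-order accuracy of the piecewise-linear discrete geometry enters, through the factor $h+\Delta t$ in \Cref{lemma_B_Bmc_diff}, and without it the discrete boundary contribution $\|v\|_{\partial_s Q^h}$ — which carries an $h^{-1/2}$ in the trace estimate — could not be reabsorbed. More generally, the whole proof hinges on the enriched norm $\tripplenorm{\cdot}_\ast$ (the added extended-domain summands $h^{-1}\|\cdot\|_{\mathcal E(Q^h)}^2$, $h\|\nabla\cdot\|_{\mathcal E(Q^h)}^2$ and $\sum_n\|\cdot^n_-\|_{\Omega^h(t_n)}^2$) being tailored so that each Cauchy--Schwarz pairing above is balanced in $h$.
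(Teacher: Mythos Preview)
Your proposal is correct and follows essentially the same route as the paper: split $B(u,v) = B_{mc}(u,v) + \bigl(B(u,v)-B_{mc}(u,v)\bigr)$, bound $B_{mc}$ termwise by Cauchy--Schwarz with the $h^{-1/2}$/$h^{1/2}$ pairing on the volume term and the $\sum_n\|u^n_-\|^2$ summand of $\tripplenorm{u}_\ast^2$ for the interface terms, and control the geometric mismatch via \Cref{lemma_B_Bmc_diff} together with the special trace inequalities \cref{eq_special_trace_nondiscrete} (for $u$) and \cref{eq_special_trace} (for $v\in W_h$). Your closing remarks on the $h$-bookkeeping and the tailoring of $\tripplenorm{\cdot}_\ast$ are exactly the point the paper is making.
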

\begin{proof}
 For the proof, we follow \cite{preuss18, heimann2025discretizationerroranalysishigh} in switching to the mass conserving bilinear form, noting that
 \begin{align}
  B(u,v) \leq |B_{mc}(u,v)| + | B(u,v) - B_{mc}(u,v) |.
 \end{align}
 For the continuity bound in $B_{mc}$, we obtain $| B_{mc}(u,v) | \lesssim \tripplenorm{u}_\ast \tripplenorm{v}$ by several applications of Cauchy Schwarz inequality, noting that in particular $| (u, \partial_t v + \vec{w} \cdot \nabla v)_{Q^h} | = | (h^{-1/2} u, h^{1/2} ( \partial_t v + \vec{w} \nabla v))_{Q^h} | \lesssim \tripplenorm{u}_\ast \cdot \tripplenorm{v}$.

 For the difference $| B(u,v) - B_{mc}(u,v) |$, we argue as follows
 \begin{align}
  | B(u,v) - B_{mc}(u,v) | &\lesssim h \cdot \| u\|_{\partial_s Q^h} \cdot \| v \|_{\partial_s Q^h} \textnormal{ by } \cref{eq_B_Bmc_diff} \\
  &\lesssim h^{1/2} \cdot \| u\|_{\partial_s Q^h} \cdot \tripplenorm{v} \textnormal{ by } \cref{eq_special_trace},
 \end{align}
 where we have used that $v\in W_h$ in the last step. In relation to $u \in H^1(Q^h)$, we argue with \Cref{eq_special_trace_nondiscrete} to obtain
 \begin{align}
  \| u\|_{\partial_s Q^h} \lesssim h^{-1/2} \| u \|_{\mathcal{E}(Q^h)} + h^{1/2} \| \nabla u \|_{\mathcal{E}(Q^h)} \lesssim \tripplenorm{u}_\ast.
 \end{align}
\end{proof}
\section{A priori error estimate} \label{sect_a_pr_est}
\subsection{Consistency}
\begin{lemma} \label{lemma_consistency}
 Let $u^e$ denote the extended solution to the exact problem \cref{strong_form_problem_ext}, and $v \in W_h$. Then, it holds
 \begin{equation}
  B(u^e,v) = f(v).
 \end{equation}
\end{lemma}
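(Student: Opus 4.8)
The plan is to check that the statement is \emph{consistency by construction}: each of the three contributions in the definition of $B$ (see \Cref{eq_def_Bh}) matches the corresponding contribution in $f$ (see \Cref{eq_def_fh}), using only that $u^e$ solves the \emph{extended} strong form problem \Cref{strong_form_problem_ext} and that, for $h$ and $\Delta t$ sufficiently small (our standing assumption), the discrete space-time regions lie inside the extension set $U$ of Assumption~\ref{u_ext_assumption}. This is exactly the mechanism anticipated in \Cref{lemma_extending_domain}: since the transport problem needs no inflow data, replacing $Q$ by the shifted discrete geometry $Q^h$ introduces no consistency error.

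First I would treat the volume terms. For each $n$ we have $Q^{h,n} = Q^{\mathrm{lin},n} \subseteq U$, so the pointwise (a.e.) identity $\partial_t u^e + \mathrm{div}(\vec{w}\, u^e) = f$ coming from \Cref{strong_form_problem_ext} holds on $Q^{h,n}$ (recall the convention that the unlabelled symbols $\vec{w}, f$ stand for their extensions $\vec{w}^e, f^e$ there). Pairing with $v|_{Q^{h,n}}$ and summing over $n$ gives $\sum_{n=1}^{N}(\partial_t u^e + \mathrm{div}(\vec{w}\, u^e), v)_{Q^{h,n}} = \sum_{n=1}^{N}(f,v)_{Q^{h,n}}$, i.e. the volume part of $B(u^e,v)$ equals the volume part of $f(v)$. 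Next I would dispose of the time-slice jump terms: since $u^e \in H^{\mathrm{reg}(u)}(U)$ with $\mathrm{reg}(u)\ge 1$, choosing the continuous-in-time representative we get $\jump{u^e}^n = \lim_{s\searrow t_n}u^e(s) - \lim_{s\nearrow t_n}u^e(s) = 0$ on $\Omega^h(t_n)$ for $n=1,\dots,N-1$, so $\sum_{n=1}^{N-1}(\jump{u^e}^n, v^n_+)_{\Omega^h(t_n)} = 0$. Finally, the remaining term is matched via the initial condition: $\Omega^h(t_0) = \Omega^{\mathrm{lin}}(t_0)\subseteq U$, hence $u^e(\cdot,0) = u_0$ on $\Omega^h(t_0)$ by \Cref{strong_form_problem_ext}, and therefore $(u^e(\cdot,0), v^0_+)_{\Omega^h(t_0)} = (u_0, v^0_+)_{\Omega^h(t_0)}$, which is precisely the leftover term of $f(v)$. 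Adding the three identities yields $B(u^e,v) = f(v)$.

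There is no genuine analytic difficulty; the only thing demanding care — the ``main obstacle'', such as it is — is the geometric and regularity bookkeeping: one must confirm that the discrete volumes $Q^{h,n}$ and the discrete time slices $\Omega^h(t_n)$ really lie inside the set $U$ on which $u^e$, $\vec{w}^e$, $f^e$ are defined and the extended PDE is satisfied, and that $\mathrm{reg}(u)$ is large enough for the spatial traces at the levels $t=t_n$ to be meaningful. Both are ensured by the standing smallness hypothesis on $h,\Delta t$ together with Assumption~\ref{u_ext_assumption}, so once these are spelled out the proof reduces to the three elementary identifications above.
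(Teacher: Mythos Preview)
Your proof is correct and follows essentially the same approach as the paper's own proof: both use that $u^e$ satisfies the extended PDE pointwise on the discrete domain (volume terms), that $u^e$ is continuous across time-slice boundaries (jump terms vanish), and that $u^e$ satisfies the extended initial condition on $\Omega^h(t_0)$. Your write-up is simply more explicit about the bookkeeping (the inclusions $Q^{h,n}\subseteq U$ and the regularity needed for traces) than the paper's terse version.
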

\begin{proof}
 Note that by virtue of Assumption \ref{u_ext_assumption}, the extended solution $u^e$ satisfies the strong form problem in a pointwise manner also on the discrete domain. Multiplying with a test function and integrating over the domain allows to establish the result, noting that $u^e$ is continuous along time slice boundaries, and satisfies the initial condition.
\end{proof}
\subsection{Interpolation and overall bound}
\begin{proposition} \label{interpol_upper_bound}
 Denoting by $u^e$ the solution to the extended continuous problem and by $u_h$ the discrete solution of \cref{eq_discrete_problem}, it holds
 \begin{equation}
  \tripplenorm{u^e - u_h} \lesssim \inf_{w_h \in W_h} \tripplenorm{ u^e - w_h}_\ast + \| w_h \|_J.
 \end{equation}
\end{proposition}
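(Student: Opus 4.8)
The plan is a Strang/C\'ea-type argument assembled from the three ingredients already established: inf--sup stability (\Cref{thm_stability}), consistency (\Cref{lemma_consistency}), and continuity (\Cref{lemma_continuity}). Fix an arbitrary $w_h \in W_h$ and split the error by the triangle inequality, $\tripplenorm{u^e - u_h} \le \tripplenorm{u^e - w_h} + \tripplenorm{w_h - u_h}$. The first term is bounded by $\tripplenorm{u^e - w_h}_\ast$ for free, since $\tripplenorm{\cdot}_\ast$ dominates $\tripplenorm{\cdot}$ by definition; so the whole work is to control the \emph{discrete} error $e_h := w_h - u_h \in W_h$ in terms of the right-hand side.

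For $e_h$ I would invoke \Cref{thm_stability} to pick a test function $\tilde v = \tilde v(e_h) \in W_h$ with $B_h(e_h,\tilde v) + J(e_h,\tilde v) \gtrsim e^{-\alpha T}\tripplenorm{e_h}_j\tripplenorm{\tilde v}_j$ and, from the construction in that proof, $\tripplenorm{\tilde v}_j \lesssim \tripplenorm{e_h}_j$. Next comes a Galerkin-type orthogonality: testing the discrete equation \cref{eq_discrete_problem} with $\tilde v$ gives $B_h(u_h,\tilde v) + J(u_h,\tilde v) = f_h(\tilde v)$, while consistency gives $B_h(u^e,\tilde v) = f_h(\tilde v)$. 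Subtracting and reinserting the $w_h$-terms, the contributions $J(u_h,\tilde v)$ cancel and one is left with
\begin{equation*}
 B_h(e_h,\tilde v) + J(e_h,\tilde v) = B_h(w_h - u^e,\tilde v) + J(w_h,\tilde v).
\end{equation*}
This identity is the crux: because consistency holds only for $B_h$ and the exact solution makes no contribution to the ghost penalty, the term $J(w_h,\tilde v)$ has no counterpart coming from $u^e$, which is exactly why the seminorm $\| w_h \|_J$ has to appear on the right-hand side of the claimed bound.

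To finish I would estimate the two terms on the right separately. By continuity (\Cref{lemma_continuity}, applicable since $w_h - u^e \in W_h + H^1(Q^h)$ thanks to the regularity of $u^e$ in \Cref{u_ext_assumption}) one has $B_h(w_h - u^e,\tilde v) \lesssim \tripplenorm{w_h - u^e}_\ast \tripplenorm{\tilde v} \le \tripplenorm{w_h - u^e}_\ast \tripplenorm{\tilde v}_j$, and by Cauchy--Schwarz for the positive semidefinite form $J$ one has $J(w_h,\tilde v) \le \| w_h \|_J \| \tilde v \|_J \le \| w_h \|_J \tripplenorm{\tilde v}_j$. Combining these with the inf--sup lower bound and cancelling the factor $\tripplenorm{\tilde v}_j$ yields $\tripplenorm{e_h} \le \tripplenorm{e_h}_j \lesssim e^{\alpha T}\big( \tripplenorm{w_h - u^e}_\ast + \| w_h \|_J \big)$; the factor $e^{\alpha T}$ is a fixed constant (independent of $h$, $\Delta t$ and the geometry, as $\alpha$ depends only on $|\mathrm{div}(\vec w)|_\infty$ and a trace constant), hence absorbed into $\lesssim$. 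Reinserting into the triangle inequality, once more bounding $\tripplenorm{u^e - w_h} \le \tripplenorm{u^e - w_h}_\ast$, and taking the infimum over $w_h \in W_h$ gives the assertion.

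I do not expect a genuine obstacle here: the estimate is the standard consequence of stability plus continuity. The two points needing a little care are (i) the bookkeeping of the ghost-penalty terms in the orthogonality identity, i.e.\ noticing that $J(u_h,\tilde v)$ drops out and only $J(w_h,\tilde v)$ survives, so that $\| w_h \|_J$ (rather than some norm of $u^e$) is what one is forced to carry along; and (ii) verifying that the continuity lemma really applies to the mixed argument $w_h - u^e \in W_h + H^1(Q^h)$, which relies on \Cref{u_ext_assumption}. A fully pedantic reading would also check $\tilde v \neq 0$ before dividing by $\tripplenorm{\tilde v}_j$; this is immediate because the proof of \Cref{thm_stability} establishes $B_h(e_h,\tilde v) + J(e_h,\tilde v) \gtrsim e^{-\alpha T}\tripplenorm{e_h}_j^2$, which is strictly positive when $e_h \neq 0$ since $\tripplenorm{\cdot}_j$ is a norm on $W_h$ by \Cref{lemma_gp_E_IpGP}.
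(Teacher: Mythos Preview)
Your argument is correct and is precisely the standard Strang/C\'ea-type combination of \Cref{thm_stability}, \Cref{lemma_consistency}, and \Cref{lemma_continuity} that the paper invokes by reference; in particular, your derivation of the key identity $B_h(e_h,\tilde v)+J(e_h,\tilde v)=B_h(w_h-u^e,\tilde v)+J(w_h,\tilde v)$ and the subsequent use of continuity and Cauchy--Schwarz match the ``detailed derivation including $\|w_h\|_J$'' the paper alludes to. The only difference is that you spell out the steps explicitly, whereas the paper simply cites the general non-conforming framework.
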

\begin{proof}
We combine \Cref{thm_stability}, \Cref{lemma_continuity}, \Cref{lemma_consistency} in the general framework of non-conforming Finite Elements, see e.g. \cite[Thm 1.35]{di2011mathematical}. (Compare also to \cite{preuss18, heimann2025discretizationerroranalysishigh, FH_PHD_2025} for a detailed derivation including $ \| w_h \|_J$.)
\end{proof}
\begin{theorem}[A priori error bound] \label{thm_a_priori_error_bnd}
 Denoting by $u^e$ the solution to the extended continuous problem \cref{strong_form_problem_ext} of regularity $\mathrm{reg}(u) \geq \min(k_s,k_t)+2$ and by $u_h$ the discrete solution of \cref{eq_discrete_problem}, it holds
 \begin{equation}
  \tripplenorm{u^e - u_h} \lesssim h^{\min(k_s,k_t) + \frac{1}{2}} \| u\|_{H^{\min(k_s,k_t)+2}}.
 \end{equation}
\end{theorem}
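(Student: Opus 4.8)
The plan is to derive the bound from \Cref{interpol_upper_bound}, which already encapsulates stability, continuity and consistency, so that it only remains to produce a single interpolant $w_h \in W_h$ of the extended solution $u^e$ and to estimate $\tripplenorm{u^e - w_h}_\ast$ and $\|w_h\|_J$. For $w_h$ I would take the usual space--time quasi-interpolant used in the unfitted literature (cf.\ \cite{preuss18, heimann2025discretizationerroranalysishigh, FH_PHD_2025}): a Scott--Zhang (or Clément) type operator of degree $k_s$ into the continuous space $V_h^{k_s}$ composed, on each slice $I_n$, with a nodal or $L^2$ interpolation of degree $k_t$ in time, restricted to the active patches so that $w_h \in W_h$. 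This is well defined because, by \Cref{u_ext_assumption}, $u^e$ is available on the neighbourhood $U$, which contains $\mathcal{E}(Q^h)$ together with the surrounding facet patches once $h$ and $\Delta t$ are small, and \cref{eq_bnd_u_e} allows one to replace norms of $u^e$ over $\mathcal{E}(Q^h)$ by norms of $u$ over $Q$ at the end. Throughout I use \Cref{assume_h_delta_t_equiv} to treat $\Delta t$ and $h$ as equivalent, and I write $e := u^e - w_h$ and $k := \min(k_s,k_t)$.

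The second step is to insert standard anisotropic tensor-product interpolation estimates into the definition of $\tripplenorm{\cdot}_\ast$ term by term. One has $\|e\|_{\mathcal{E}(Q^h)} \lesssim h^{k+1}\|u\|_{H^{k+2}}$, $\|\nabla e\|_{\mathcal{E}(Q^h)} \lesssim h^{k_s}\|u\|_{H^{k+2}}$ and $\|\partial_t e\|_{\mathcal{E}(Q^h)} \lesssim h^{k}\|u\|_{H^{k+2}}$ (and likewise over the subdomain $Q^h$), where the requirement $\mathrm{reg}(u) \ge k+2$ is exactly what is needed to control the mixed space--time derivatives entering these estimates. Consequently the material-derivative term satisfies $h\,\|\partial_t e + \vec{w}\cdot\nabla e\|_{Q^h}^2 \lesssim h\,(\|\partial_t e\|_{Q^h}^2 + \|\vec{w}\|_\infty^2 \|\nabla e\|_{Q^h}^2) \lesssim h^{2k+1}\|u\|_{H^{k+2}}^2$; the volume term $\|e\|_{Q^h}^2$ and the weighted terms $\tfrac1h\|e\|_{\mathcal{E}(Q^h)}^2$ and $h\|\nabla e\|_{\mathcal{E}(Q^h)}^2$ are all $\lesssim h^{2k+1}\|u\|_{H^{k+2}}^2$ or smaller; and the time-slice contributions in $\tripplejump{e}^2$ and in $\sum_n \|e_-^n\|_{\Omega^h(t_n)}^2$ are bounded by the temporal trace inequality $\|e(\cdot,t_n)\|_{\Omega^h(t_n)}^2 \lesssim \tfrac1{\Delta t}\|e\|_{\mathcal{E}(Q^{h,n})}^2 + \Delta t\,\|\partial_t e\|_{\mathcal{E}(Q^{h,n})}^2$, which after summation in $n$ again yields $\lesssim h^{2k+1}\|u\|_{H^{k+2}}^2$. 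Hence $\tripplenorm{u^e - w_h}_\ast \lesssim h^{k+\frac12}\|u\|_{H^{k+2}}$.

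The third step estimates the ghost-penalty term $\|w_h\|_J = J(w_h,w_h)^{1/2}$. Fixing a facet patch $\omega_F = T_1 \cup T_2$ and a time $t$, let $\pi_{\omega_F}$ be the $L^2(\omega_F)$ projection of $u^e(\cdot,t)$ onto the single polynomial space $\mathcal{P}^k(\omega_F)$. Since the direct ghost-penalty jump operator $\jump{\cdot}_{\omega_F}$ annihilates any function that is one polynomial on the whole patch, $\jump{w_h}_{\omega_F} = \jump{w_h - \pi_{\omega_F}u^e}_{\omega_F}$, and stability of the polynomial extension $\mathcal{E}^p$ on shape-regular patches gives $\|\jump{w_h}_{\omega_F}(\cdot,t)\|_{\omega_F} \lesssim \|w_h - \pi_{\omega_F}u^e\|_{\omega_F} \lesssim \|w_h - u^e\|_{\omega_F} + h^{k+1}\|u^e\|_{H^{k+1}(\omega_F)}$ by the triangle inequality and a Bramble--Hilbert argument. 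Multiplying by $\gamma_J h^{-1}$, integrating over $I_n$, and summing over all facets of $\mathcal{F}^{n}$ and all slices (using the finite overlap of the patches and their $\mathcal{E}(Q^h)$-localisation) yields $J(w_h,w_h) \lesssim h^{-1}\big(\|w_h - u^e\|_{\mathcal{E}(Q^h)}^2 + h^{2k+2}\|u\|_{H^{k+2}}^2\big) \lesssim h^{2k+1}\|u\|_{H^{k+2}}^2$, so $\|w_h\|_J \lesssim h^{k+\frac12}\|u\|_{H^{k+2}}$. Plugging the two bounds into \Cref{interpol_upper_bound} gives the assertion.

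I expect the main obstacle to be purely on the approximation-theory side: designing the space--time quasi-interpolant so that it simultaneously lands in $W_h$ with its continuous-in-space, discontinuous-in-time and cut-mesh structure, is stable using only the extension $u^e$ available on $U$, and satisfies anisotropic estimates whose hypotheses collapse to $\mathrm{reg}(u) \ge k+2$ (the extra order over the naive $k+1$ being the price of the mixed space--time derivatives). The rate $h^{k+\frac12}$, rather than $h^{k+1}$, is forced by the $h^{1/2}$ weight on the material derivative in $\tripplenorm{\cdot}$ together with the $h^{-1/2}$ that surfaces through the trace and ghost-penalty estimates, consistent with the gap between the analysis and the $h^{k+1}$ observed numerically. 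Note finally that the mismatch between $\tripplenorm{\cdot}_\ast$ and $\tripplenorm{\cdot}$ costs nothing here, since the interpolation error is estimated directly in the stronger norm $\tripplenorm{\cdot}_\ast$.
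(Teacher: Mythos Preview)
Your proposal is correct and follows essentially the same route as the paper: invoke \Cref{interpol_upper_bound}, choose a tensor-product space--time interpolant into $W_h$, and bound each summand of $\tripplenorm{\cdot}_\ast$ and $\|\cdot\|_J$ by standard anisotropic approximation estimates combined with \Cref{assume_h_delta_t_equiv} and \cref{eq_bnd_u_e}. The only differences are cosmetic: the paper cites the concrete interpolant $\Pi^{k_t,k_s}$ of \cite{preuss18,burman_preuss_25} and its fixed-time estimate $\|(u-\Pi^k u)(t)\|_{\mathcal{E}(Q^h)|_t} \lesssim \Delta t^{k_t+1/2} + \Delta t^{-1/2} h^{k_s+1}$ for the jump terms, where you instead reach the same $h^{2k+1}$ via the temporal trace inequality; and for the ghost penalty the paper invokes the technique of \cite[Prop.~3.26]{preuss18} to get the slightly sharper $\|\Pi^k u\|_J \lesssim h^{k_s+1}$, whereas your Bramble--Hilbert/patch-projection argument yields $h^{k+1/2}$, which still suffices.
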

\begin{proof}
 Starting from \Cref{interpol_upper_bound}, we are left with the task of finding interpolation bounds. Specifically, we bound the limit by the discrete function supplied by the interpolation operator introduced in \cite{preuss18,burman_preuss_25}. Let us denote the global variant as $\Pi^{k_t,k_s}$, or $\Pi^{k}$ for $k=(k_t,k_s)$. Then, as shown in \cite[Thm A5]{burman_preuss_25}, it holds in our notation, for sufficiently smooth $u$\footnote{Note that as a choice of presentation, we will assume maximal regularity of $u$ when compared with discrete orders.}
 \begin{align}
  \| u - \Pi^{k} u \|_{\mathcal{E}(Q^h)} &\lesssim \Delta t^{k_t + 1} \| u\|_{H^{k_t+1}(\mathcal{E}(Q^h))} + h^{k_s +1} \| u\|_{H^{k_s+1}(\mathcal{E}(Q^h))}, \\
  \| \partial_t (u - \Pi^{k} u) \|_{\mathcal{E}(Q^h)} &\lesssim \Delta t^{k_t} \| u\|_{H^{k_t+1}(\mathcal{E}(Q^h))} + h^{k_s +1} \| u\|_{H^{k_s+2}(\mathcal{E}(Q^h))}, \\
  \| \nabla (u - \Pi^{k} u) \|_{\mathcal{E}(Q^h)} &\lesssim \Delta t^{k_t+1} \| u\|_{H^{k_t+1}(\mathcal{E}(Q^h))} + h^{k_s} \| u\|_{H^{k_s+1}(\mathcal{E}(Q^h))}.
 \end{align}
 We use these results to establish the bound in relation to the $L^2$ in space and time summands of $\tripplenorm{\cdot}_\ast$. The material derivative is split by a triangle inequality. Noting the scaling within $\tripplenorm{\cdot}$ yields exactly order $\min(k_s,k_t) + \frac{1}{2}$.

 Next, the norms for jump operators can be bound by the respective fixed time norms. These terms also appear in $\tripplenorm{\cdot}_\ast$ individually and are bound by the following estimate from \cite[Lemma A7]{burman_preuss_25} for any $t \in I_n, n=1,\dots,N$
 \begin{align*}
  \| (u - \Pi^{k} u)(t) \|_{\mathcal{E}(Q^h)|_t} \lesssim \Delta t^{k_t + \frac{1}{2}} \| u \|_{H^{k_t+1}(\mathcal{E}(Q^{h,n}))} + \Delta t^{-\frac{1}{2}} h^{k_s +1} \| u\|_{H^{k_s+1}(\mathcal{E}(Q^{h,n}))}.
 \end{align*}
 Summing over $n$ and taking into account $h \simeq \Delta t$, the summands are controlled by the same upper bound.

 In all cases, the boundedness of the extension $u^e$ is used to transfer control from the extended unfitted domain to the physical one, c.f. \Cref{eq_bnd_u_e}.

 Finally, in relation to the Ghost penalty remainder, we can establish
 \begin{equation}
  \| \Pi^{k} u \|_J \lesssim \sqrt{\gamma_J} h^{k_s + 1} \| u \|_{H^{k_s+1}(\mathcal{E}(Q^h))}
 \end{equation}
 by the proof technique of \cite[Prop. 3.26]{preuss18}, noting that our $h$ scaling is one order smaller, $j=-1$, and $h \simeq \Delta t$ in our application.
\end{proof}
We finally point out that by virtue of the problem specific norm, the following error bound in the material derivative is implied.
\begin{corollary} \label{cor_final_bound_mat_deriv}
 Denoting by $u^e$ the solution to the extended continuous problem \cref{strong_form_problem_ext} of regularity $\mathrm{reg}(u) \geq \min(k_s,k_t)+2$ and by $u_h$ the discrete solution of \cref{eq_discrete_problem}, it holds
 \begin{equation}
  \| (\partial_t + \vec{w} \nabla) (u^e - u_h) \|_{Q^h} \lesssim h^{\min(k_s,k_t)} \| u\|_{H^{\min(k_s,k_t)+2}}.
 \end{equation}
\end{corollary}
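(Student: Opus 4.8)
The plan is to read the bound off directly from the problem specific norm estimate of \Cref{thm_a_priori_error_bnd}, using that the material derivative appears as one of its summands. Recall from \Cref{eq_def_tripplenorm} that
\begin{equation*}
 \tripplenorm{v}^2 = h \, \| \partial_t v + \vec{w} \nabla v \|_{Q^h}^2 + \| v \|_{Q^h}^2 + \tripplejump{v}^2,
\end{equation*}
so in particular $h \, \| (\partial_t + \vec{w}\nabla)(u^e - u_h) \|_{Q^h}^2 \le \tripplenorm{u^e - u_h}^2$. Applying \Cref{thm_a_priori_error_bnd} (whose regularity hypothesis $\mathrm{reg}(u) \ge \min(k_s,k_t)+2$ is exactly the one assumed here) and squaring its conclusion gives
\begin{equation*}
 h \, \| (\partial_t + \vec{w}\nabla)(u^e - u_h) \|_{Q^h}^2 \le \tripplenorm{u^e - u_h}^2 \lesssim h^{2\min(k_s,k_t)+1} \, \| u \|_{H^{\min(k_s,k_t)+2}}^2 .
\end{equation*}
Dividing through by $h$ and taking square roots yields the claimed estimate.

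There is essentially no obstacle: the only point worth keeping in mind is that the loss of exactly one half order relative to the $L^2$ rate of \Cref{thm_a_priori_error_bnd} is precisely compensated by the weight $h^1$ carried by the material derivative term in $\tripplenorm{\cdot}$; by \Cref{assume_h_delta_t_equiv} this weight could equally be written in terms of $\Delta t$ (or combinations thereof) without changing the conclusion. It is also worth stressing that the estimate is only for the material derivative combination $\partial_t + \vec{w}\nabla$, since $\tripplenorm{\cdot}$ controls neither $\partial_t(u^e - u_h)$ nor $\nabla(u^e - u_h)$ separately for the \emph{non-discrete} error $u^e - u_h$; the full space-time $H^1$ behaviour observed numerically is therefore not implied by this argument alone.
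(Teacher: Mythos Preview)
Your argument is correct and is exactly the approach the paper takes: the corollary is stated as an immediate consequence of \Cref{thm_a_priori_error_bnd} via the material-derivative summand in the definition of $\tripplenorm{\cdot}$, and your explicit computation (square, divide by $h$, take roots) is precisely what is implicit in the paper's one-line justification. The additional remarks you make about the $h$-weight and the limitation to the material derivative combination are accurate and consistent with the paper's discussion.
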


\section{Numerical Experiments} \label{sect_num_exp}
In this section, we demonstrate the convergence property of the suggested method by numerical experiments. In particular, we prescribe a vector-field $\vec{w}$, an according moving domain, and a solution function $u$ and calculate the according right hand side $f$ based on the PDE. This allows for straightforward error calculations to assess convergence orders.

All computations are performed with the library \texttt{ngsxfem} \cite{xfem_joss}, within the general FE software \texttt{NGSolve} \cite{schoberl2014c++}. For the resulting linear systems, the direct solvers \texttt{Umfpack} \cite{10.1145/992200.992206} and \texttt{pardiso} \cite{SCHENK200169} are used.\footnote{We mention in passing that a computationally similar setup with diffusion was investigated/ presented in \cite{HLP2022}.}
\subsection{Spatially two dimensional problem} First, let us investigate the method by an example with $\mathrm{div}(w) \neq 0$ in two spatial dimensions.
In particular, let $\tilde \Omega = [-3.5,3.5]^2$, $\beta = 1$, and $\vec{w} = \beta (x,y)^T$. The time interval of physical interest shall be $[0,T] = [0,1]$. In this time, the domain is expanding from a circle centered at the origin with radius 1 to a circle with radius $e$. This is realised by the following shifted coordinates and time dependent levelset function
\begin{equation}
 \tilde x = \exp (-\beta t) x, \quad \tilde y = \exp (-\beta t) y, \quad r = \sqrt{ \tilde x^2 + \tilde y^2}, \quad \phi = r - 1.
\end{equation}
Note that this geometry satisfies the condition of $\partial \Omega$ moving with $\vec{w}$.
We assume that the solution function $u$ becomes
$
 u = \cos (\pi r) \cdot \sin \left(\frac{\pi}{2} t \right),
$
and calculate $f$ accordingly in the manufactured solution approach.

The background domain $\tilde \Omega$ is equipped with a uniform unstructured mesh of mesh size $h$. When refining spatial and temporal resolution by means of a parameter $i=0,1, \dots$, mesh size and time step are set as
$
 h = 0.9 \cdot \left( \frac{1}{2} \right)^i, \Delta t = T \cdot \left( \frac{1}{2} \right)^{i+1}.
$
%
We illustrate an example mesh, discrete geometry, transport velocity, and time-dependent solution function $u$ in \Cref{num_exp_figure1}.
\begin{figure}
\begin{center}
\includegraphics[width=0.99\textwidth]{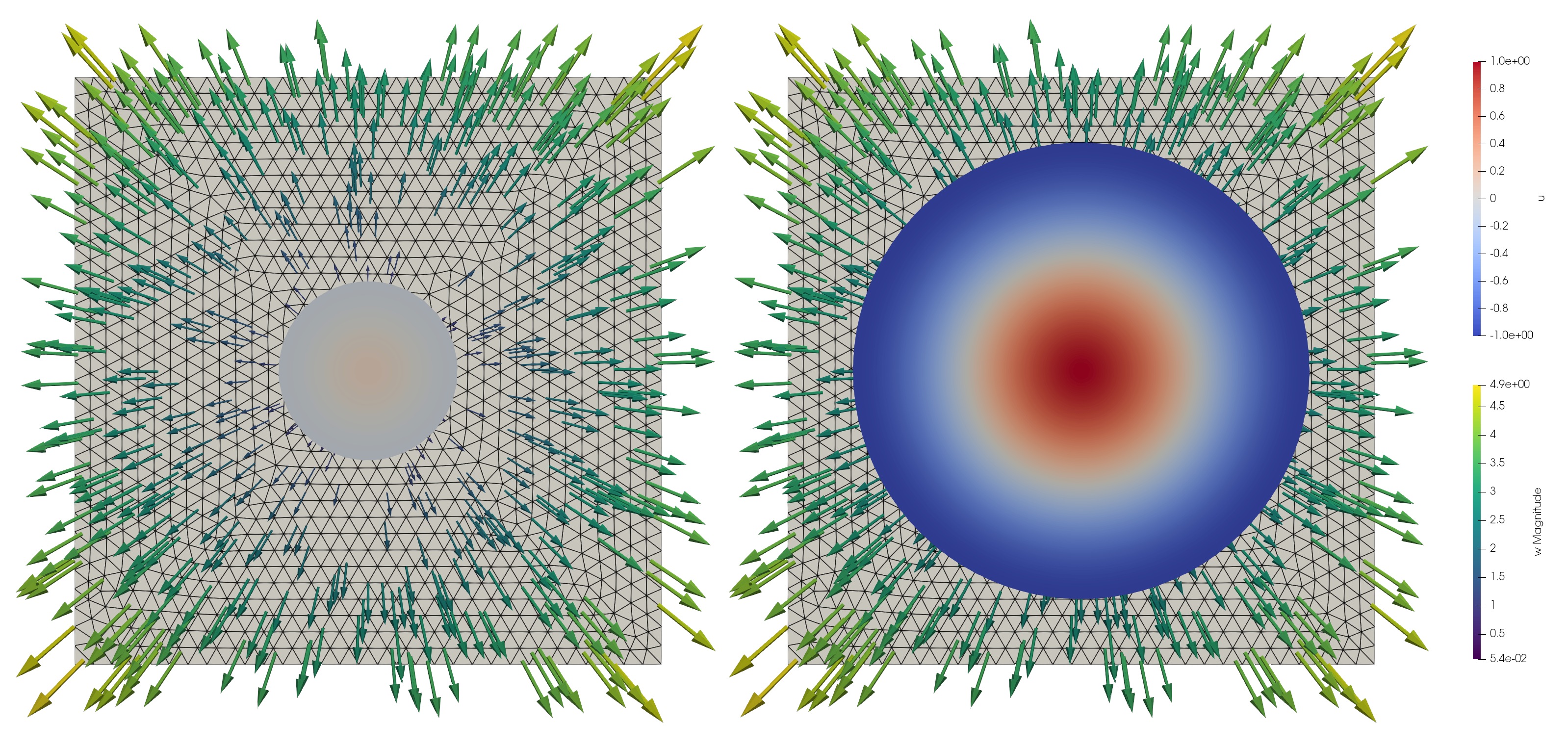}
\end{center}
\caption{Unfitted example geometry in two spatial dimensions. In the background, an unfitted finite element mesh is shown, together with the flow field $\vec{w}$ represented as arrows. In the foreground, the computational geometry with solution function $u$ is displayed. Left: At the end of the first time step, Right: At the end of the final time step.}
\label{num_exp_figure1}
\end{figure}
First, we investigate the method with uncurved meshes, which was the focus of the numerical analysis. We fix an order $k=2,\dots,6$ and set all discrete space orders to that, $k_s = k_t = k$. For the discrete geometry order in time, $q_t$, we chose $q_t=1$ as the smallest and hence computationally most favourable option. We investigate the convergence of the discrete solution function against the manufactured solution in the following norms:
\begin{align}
 \| u - u_h \|_{L^2(\Omega(T))}^2 &= \int_{\Omega(T)} (u(T) - u_h(T))^2, \\
 | u - u_h |_{H^1(Q)}^2 &= \int_0^T \int_{\Omega(t)} (\nabla u- \nabla u_h)^2 + (\partial_t u- \partial_t u_h)^2
\end{align}
The results for these error norms are shown in \Cref{plot1}.
\begin{figure}
\begin{center}
   \begin{tikzpicture}[scale=0.75]
    \begin{semilogyaxis}[ xlabel=$i$, ylabel={$\| u - u_h \|_{L^2(\Omega(T))}$},
    legend entries ={ $k=2$, $k=3$, $k=4$, $k=5$, $k=6$}, legend style={anchor=north,legend columns=1, draw=none}, legend pos = south west,
     ]
     \addplot table [x index = 0, y index =1] {num_exp/out/conv_drift_DG_ks2_kt2_both_nref6_gamma0.05_K0.0_sm0_ktls1_ksls1_alpha0.0.dat};
     \addplot table [x index = 0, y index =1] {num_exp/out/conv_drift_DG_ks3_kt3_both_nref6_gamma0.05_K0.0_sm0_ktls1_ksls1_alpha0.0.dat};
     \addplot table [x index = 0, y index =1] {num_exp/out/conv_drift_DG_ks4_kt4_both_nref5_gamma0.05_K0.0_sm0_ktls1_ksls1_alpha0.0.dat};
     \addplot table [x index = 0, y index =1] {num_exp/out/conv_drift_DG_ks5_kt5_both_nref4_gamma0.05_K0.0_sm0_ktls1_ksls1_alpha0.0.dat};
     \addplot table [x index = 0, y index =1] {num_exp/out/conv_drift_DG_ks6_kt6_both_nref4_gamma0.05_K0.0_sm0_ktls1_ksls1_alpha0.0.dat};
     \addplot[gray, dashed, domain=0:5] {(1/2^(x+2)))^3};
     \addplot[gray, dashed, domain=0:5] {(1/2^(x+2)))^4};
     \addplot[gray, dashed, domain=0:4] {(1/2^(x+2)))^5};
     \addplot[gray, dashed, domain=0:3] {(1/2^(x+2)))^6};
     \addplot[gray, dashed, domain=0:3] {(1/2^(x+2.5)))^7};
    \end{semilogyaxis}
                  \node[scale=0.75] at (4.5,5.25) {$O(h^{k+1})= O(\Delta t^{k+1})$};
     \draw[scale=0.75, gray, dash=on 2.25pt off 2.25pt phase 0pt, line width=0.4*0.75pt] (6.25/0.75,5.25/0.75) -- (6.8/0.75,5.25/0.75);
   \end{tikzpicture}
   \begin{tikzpicture}[scale=0.75]
    \begin{semilogyaxis}[ xlabel=$i$, ylabel={$| u - u_h|_{H^1(Q)}$},
    legend entries ={ $k=2$, $k=3$, $k=4$, $k=5$, $k=6$}, legend style={anchor=north,legend columns=1, draw=none}, legend pos = south west,
     ]
     \addplot table [x index = 0, y index =2] {num_exp/out/conv_drift_DG_ks2_kt2_both_nref6_gamma0.05_K0.0_sm0_ktls1_ksls1_alpha0.0.dat};
     \addplot table [x index = 0, y index =2] {num_exp/out/conv_drift_DG_ks3_kt3_both_nref6_gamma0.05_K0.0_sm0_ktls1_ksls1_alpha0.0.dat};
     \addplot table [x index = 0, y index =2] {num_exp/out/conv_drift_DG_ks4_kt4_both_nref5_gamma0.05_K0.0_sm0_ktls1_ksls1_alpha0.0.dat};
     \addplot table [x index = 0, y index =2] {num_exp/out/conv_drift_DG_ks5_kt5_both_nref4_gamma0.05_K0.0_sm0_ktls1_ksls1_alpha0.0.dat};
     \addplot table [x index = 0, y index =2] {num_exp/out/conv_drift_DG_ks6_kt6_both_nref4_gamma0.05_K0.0_sm0_ktls1_ksls1_alpha0.0.dat};
     \addplot[gray, dashed, domain=0:5] {(1/2^(x+1)))^2};
     \addplot[gray, dashed, domain=0:5] {(1/2^(x+1)))^3};
     \addplot[gray, dashed, domain=0:4] {(1/2^(x+1)))^4};
     \addplot[gray, dashed, domain=0:3] {(1/2^(x+1)))^5};
     \addplot[gray, dashed, domain=0:3] {(1/2^(x+1.5)))^6};
    \end{semilogyaxis}
                  \node[scale=0.75] at (4.5,5.25) {$O(h^{k})= O(\Delta t^{k})$};
     \draw[scale=0.75, gray, dash=on 2.25pt off 2.25pt phase 0pt, line width=0.4*0.75pt] (6.25/0.75,5.25/0.75) -- (6.8/0.75,5.25/0.75);
   \end{tikzpicture}
\end{center}
\caption{Convergence of the discrete method for $k=k_s=k_t$ and uncurved meshes in $L^2$ and $H^1$ (semi)norms. }
\label{plot1}
\end{figure}
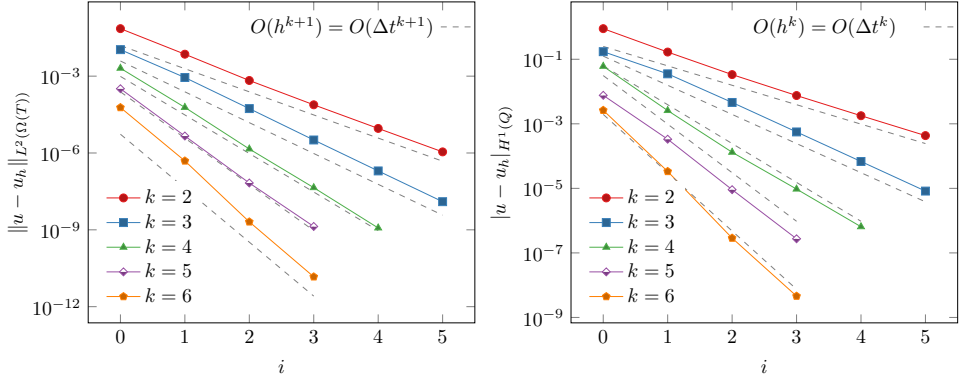
In terms of the convergence orders, we observe
\begin{equation}
 \| u - u_h \|_{L^2(\Omega(T))} \simeq h^{k+1} \simeq \Delta t^{k+1}, \quad | u - u_h |_{H^1(Q)} \simeq h^{k} \simeq \Delta t^{k}
\end{equation}
Comparing this to the a priori estimate, we note that the $ | \cdot |$ norm is stronger than the combined material derivative contribution in the problem-specific norm, demonstrating the optimality of the theoretical estimate in \Cref{cor_final_bound_mat_deriv} in relation to the material derivative norm. As it relates to the $L^2$ scaling, we note that the order observed in numerical experiments is optimal with regards to the interpolation quality. Compared with \Cref{thm_a_priori_error_bnd}, we observe half an order better numerical performance, but the problem-specific norm was not chosen to yield optimal results in this regard. We leave it as a question for further research whether more exotic meshes could validate the sharpness of the $h^{k+\frac{1}{2}}$ bound.
\begin{remark}[On $q_t=0$ and discrete Ghost penalty extensions] \label{remark_on_qt0}
 When defining the DG in time jump penalties, we have assumed continuity of the discrete geometry in time, to yield straightfoward well-posedness of the jump operator. This motivated the choice of $q_t \geq 1$. In the light of the works on CG in \cite{HLP2022} or time stepping investigations such as \cite{LO_ESAIM_2019}, it seems interesting to combine a discontinuous in time discrete geometry stemming from $q_t = 0$ with a Ghost penalty for the purpose of discrete extension of the domain. We mention that from a numerical point of view, the discrete extension mechanism introduced for CG in \cite{HLP2022} seems to yield appropriate discrete extensions when combined with the method at hand for $q_t = 0$, but leave a further detailed investigation for future research.
\end{remark}
\paragraph{Curved Mesh variant method} Next, we give a numerical comparison to the method with curved meshes, as presented in \Cref{discrete_curved_geom_method}. The discrete errors for the same test case are displayed in \Cref{plot2}.
\begin{figure}
\begin{center}
   \begin{tikzpicture}[scale=0.75]
    \begin{semilogyaxis}[ xlabel=$i$, ylabel={$\| u - u_h \|_{L^2(\Omega(T))}$},
    legend entries ={ $k=2$, $k=3$, $k=4$, $k=5$, $k=6$}, legend style={anchor=north,legend columns=1, draw=none}, legend pos = south west,
     ]
     \addplot table [x index = 0, y index =1] {num_exp/out/conv_drift_DG_ks2_kt2_both_nref6_gamma0.05_K0.0_sm0_alpha0.0.dat};
     \addplot table [x index = 0, y index =1] {num_exp/out/conv_drift_DG_ks3_kt3_both_nref6_gamma0.05_K0.0_sm0_alpha0.0.dat};
     \addplot table [x index = 0, y index =1] {num_exp/out/conv_drift_DG_ks4_kt4_both_nref5_gamma0.05_K0.0_sm0_alpha0.0.dat};
     \addplot table [x index = 0, y index =1] {num_exp/out/conv_drift_DG_ks5_kt5_both_nref4_gamma0.05_K0.0_sm0_alpha0.0.dat};
     \addplot table [x index = 0, y index =1] {num_exp/out/conv_drift_DG_ks6_kt6_both_nref4_gamma0.05_K0.0_sm0_alpha0.0.dat};
     \addplot[gray, dashed, domain=0:5] {(1/2^(x+2)))^3};
     \addplot[gray, dashed, domain=0:5] {(1/2^(x+2)))^4};
     \addplot[gray, dashed, domain=0:4] {(1/2^(x+2)))^5};
     \addplot[gray, dashed, domain=0:3] {(1/2^(x+2)))^6};
     \addplot[gray, dashed, domain=0:3] {(1/2^(x+2.5)))^7};
    \end{semilogyaxis}
                  \node[scale=0.75] at (4.5,5.25) {$O(h^{k+1})= O(\Delta t^{k+1})$};
     \draw[scale=0.75, gray, dash=on 2.25pt off 2.25pt phase 0pt, line width=0.4*0.75pt] (6.25/0.75,5.25/0.75) -- (6.8/0.75,5.25/0.75);
   \end{tikzpicture}
   \begin{tikzpicture}[scale=0.75]
    \begin{semilogyaxis}[ xlabel=$i$, ylabel={$| u - u_h|_{H^1(Q)}$},
    legend entries ={ $k=2$, $k=3$, $k=4$, $k=5$, $k=6$}, legend style={anchor=north,legend columns=1, draw=none}, legend pos = south west,
     ]
     \addplot table [x index = 0, y index =2] {num_exp/out/conv_drift_DG_ks2_kt2_both_nref6_gamma0.05_K0.0_sm0_alpha0.0.dat};
     \addplot table [x index = 0, y index =2] {num_exp/out/conv_drift_DG_ks3_kt3_both_nref6_gamma0.05_K0.0_sm0_alpha0.0.dat};
     \addplot table [x index = 0, y index =2] {num_exp/out/conv_drift_DG_ks4_kt4_both_nref5_gamma0.05_K0.0_sm0_alpha0.0.dat};
     \addplot table [x index = 0, y index =2] {num_exp/out/conv_drift_DG_ks5_kt5_both_nref4_gamma0.05_K0.0_sm0_alpha0.0.dat};
     \addplot table [x index = 0, y index =2] {num_exp/out/conv_drift_DG_ks6_kt6_both_nref4_gamma0.05_K0.0_sm0_alpha0.0.dat};
     \addplot[gray, dashed, domain=0:5] {(1/2^(x+1)))^2};
     \addplot[gray, dashed, domain=0:5] {(1/2^(x+1)))^3};
     \addplot[gray, dashed, domain=0:4] {(1/2^(x+1)))^4};
     \addplot[gray, dashed, domain=0:3] {(1/2^(x+1)))^5};
     \addplot[gray, dashed, domain=0:3] {(1/2^(x+1.5)))^6};
    \end{semilogyaxis}
                  \node[scale=0.75] at (4.5,5.25) {$O(h^{k})= O(\Delta t^{k})$};
     \draw[scale=0.75, gray, dash=on 2.25pt off 2.25pt phase 0pt, line width=0.4*0.75pt] (6.25/0.75,5.25/0.75) -- (6.8/0.75,5.25/0.75);
   \end{tikzpicture}
\end{center}
\caption{Convergence of the discrete method for $k=k_s=k_t$ and curved meshes in $L^2$ and $H^1$ (semi)norms. }
\label{plot2}
\end{figure}
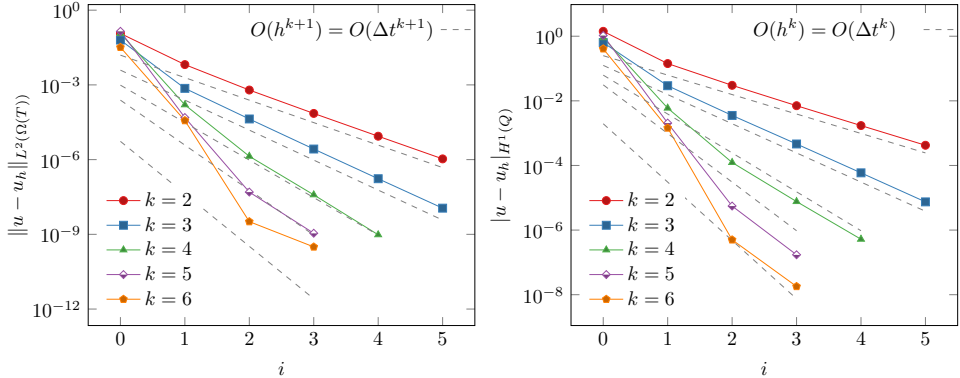
We note that the method variant shares the optimal higher order convergence property in the limit $h$ and $\Delta t \to 0$. However, in the pre-asymptotic regime, the curved meshes seem to negatively affect the absolute numerical errors. This is not uncommon in isoparametric finite elements, but might be seen as a computational argument in favour of the method with uncurved discrete geometries.

\paragraph{Mass Conserving variant method} Paralleling works in the convection-diffusion case \cite{preuss18, MYRBACK2024117245, heimann2025discretizationerroranalysishigh}, one might be interested in considering the straightforward implementation of the method involving the mass conserving bilinear form \Cref{eq_intro_Bmc}. We present the convergence results for the two-dimensional test case for comparison in \Cref{plot3}, with uncurved meshes on the left-hand side and isoparametric meshes on the right hand side. Most clearly, the method with uncurved meshes converges only with order $\sqrt{h}$ in the $H^1$ norm, whilst the curved meshes retain higher order convergence. In the $L^2$ norm, we observe relatedly (experiments not shown in plots) $h^{3/2}$ with the uncurved meshes and full high order with curved meshes. We conclude that the mass conservative form does not seem suited for a combination with piecewise linear computational geometries, most likely due to a lack of consistency.\footnote{We mention in passing that in relation to the blending options of the isoparametric mapping as discussed in detail in \cite{heimann2024geometrically}, we have moved from the Finite Element blending used throughout this study to the smooth blending for the conservative formulation, as we have found it to yield the full orders of convergence for high order applications such as $k=4$.}
\begin{figure}
\begin{center}
   \begin{tikzpicture}[scale=0.75]
    \begin{semilogyaxis}[ xlabel=$i$, ylabel={$| u - u_h|_{H^1(Q)}$}, title={Cons. form. $k=k_t=k_s$, $q_t=q_s=1$},
    legend entries ={ $k=2$, $k=3$, $k=4$}, legend style={anchor=north,legend columns=1, draw=none}, legend pos = north east,
     ]
     \addplot table [x index = 0, y index =2] {num_exp/out/conv_drift_DG_ks2_kt2_both_nref6_gamma0.05_K0.0_sm0_ktls1_ksls1_cons_alpha0.0.dat};
     \addplot table [x index = 0, y index =2] {num_exp/out/conv_drift_DG_ks3_kt3_both_nref6_gamma0.05_K0.0_sm0_ktls1_ksls1_cons_alpha0.0.dat};
     \addplot table [x index = 0, y index =2] {num_exp/out/conv_drift_DG_ks4_kt4_both_nref5_gamma0.05_K0.0_sm0_ktls1_ksls1_cons_alpha0.0.dat};
     \addplot[gray, dashed, domain=0:5] {(1/2^(x+1)))^0.5};
    \end{semilogyaxis}
                  \node[scale=0.75] (order_note) at (2.1,0.5) {$O(\sqrt{h})= O(\sqrt{\Delta t})$};
     \draw[scale=0.75, gray, dash=on 2.25pt off 2.25pt phase 0pt, line width=0.4*0.75pt] (order_note.west) -- ($(order_note.west) + (-0.5,0)$); 
   \end{tikzpicture}
       \begin{tikzpicture}[scale=0.75]
    \begin{semilogyaxis}[ xlabel=$i$, ylabel={$| u - u_h|_{H^1(Q)}$}, title={Cons. form. $k=k_t=k_s = q_t=q_s$},
    legend entries ={ $k=2$, $k=3$, $k=4$}, legend style={anchor=north,legend columns=1, draw=none}, legend pos = south west,
     ]
     \addplot table [x index = 0, y index =2] {num_exp/out/conv_drift_DG_ks2_kt2_both_nref6_gamma0.05_K0.0_sm0_cons_smb_alpha0.0.dat};
     \addplot table [x index = 0, y index =2] {num_exp/out/conv_drift_DG_ks3_kt3_both_nref6_gamma0.05_K0.0_sm0_cons_smb_alpha0.0.dat};
     \addplot table [x index = 0, y index =2] {num_exp/out/conv_drift_DG_ks4_kt4_both_nref5_gamma0.05_K0.0_sm0_cons_smb_alpha0.0.dat};
     \addplot[gray, dashed, domain=0:5] {(1/2^(x+1)))^2};
     \addplot[gray, dashed, domain=0:5] {(1/2^(x+1)))^3};
     \addplot[gray, dashed, domain=0:4] {(1/2^(x+1)))^4};
    \end{semilogyaxis}
                  \node[scale=0.75] at (4.5,5.25) {$O(h^{k})= O(\Delta t^{k})$};
     \draw[scale=0.75, gray, dash=on 2.25pt off 2.25pt phase 0pt, line width=0.4*0.75pt] (6.25/0.75,5.25/0.75) -- (6.8/0.75,5.25/0.75);
   \end{tikzpicture}
\end{center}
\caption{Convergence of the discrete conservative form method for $k=k_s=k_t$ on uncurved (left hand side) and curved (right hand side) meshes in $H^1$ seminorm. }
\label{plot3}
\end{figure}
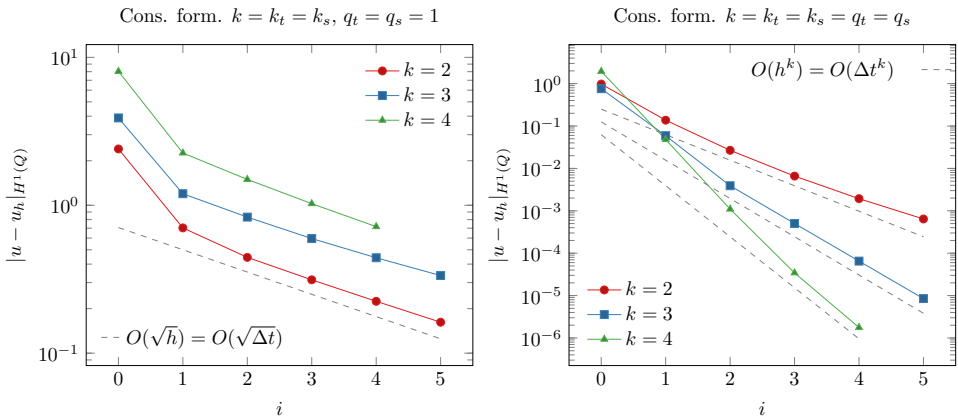


\subsection{Spatially three dimensional problem} Next, we also demonstrate the applicability of our proposed method to a spatially three dimensional problem, which is motivated by our intended real-life applications in droplets and fluids: In cylindrical coordinates, we chose a parabolic flow profile in a rectangular channel, which will deforme a circle geometry into a kite-shaped object, serving as a blueprint for a deformed droplet. We mention in passing that this is a three-dimensional variant/ adaptation of the kite geometry of \cite{HLP2022, fhreport}.

In particular, let $T= 0.5$, $\Omega = [-1.2,6] \times [-1.2,1.2]^2$, $\rho(x,y,z,t) = (10 - y^2 - z^2) \cdot t$, $\vec{w} = (\partial_t \rho, 0,0)^T$, $r = \sqrt{ (x-\rho)^2 + y^2 + z^2 }$, $\phi = r - 1$. For this example, we consider the solution $ u = \cos(\pi r)$.

In \Cref{plot_kite3d}, the computational results for the method involving uncurved meshes is displayed, in particular with $k=2$. Apart from the validation given by these, we confirm the optimal orders of convergence in a smaller study in \Cref{plot4}.
\begin{figure}
 \includegraphics[width=\textwidth]{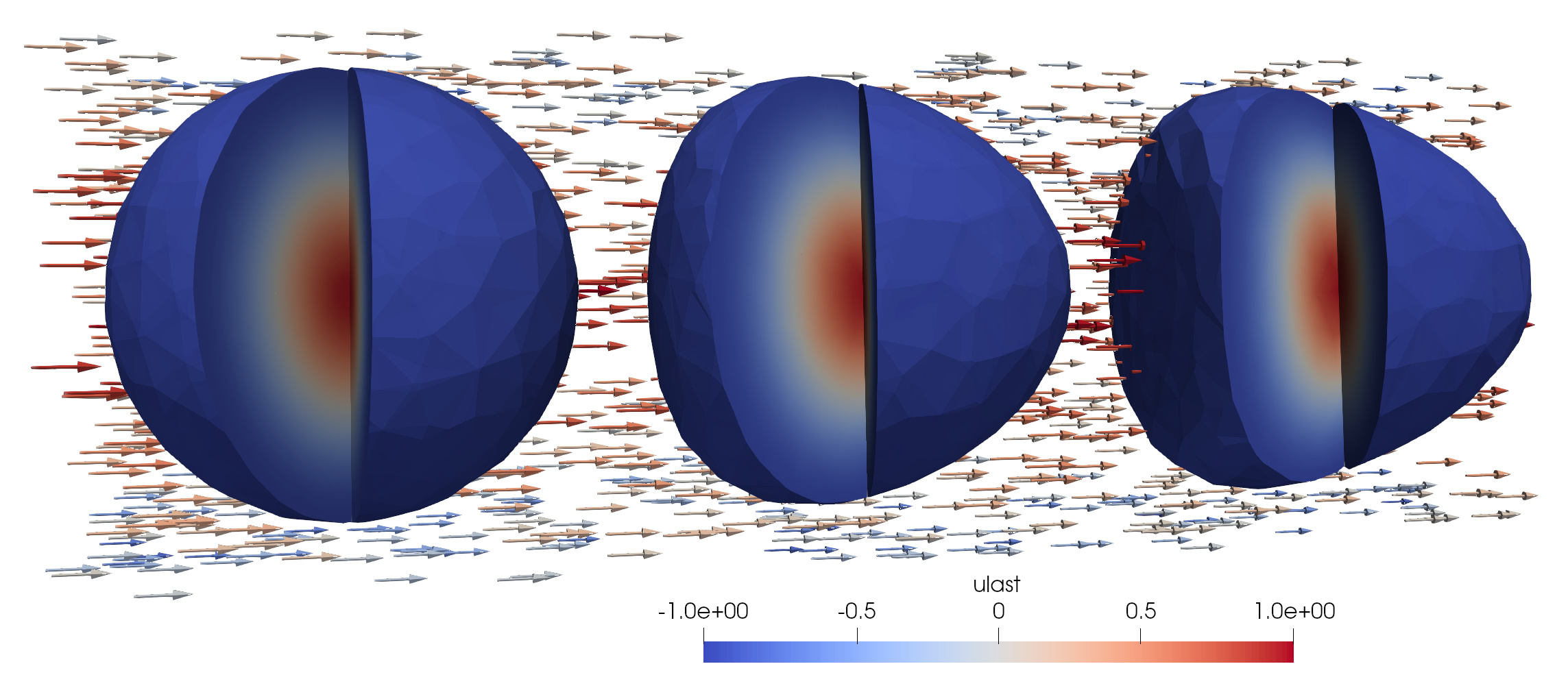}
 \caption{Flow field $\vec{w}$ (arrows) and discrete solution $u$ for three times instances between $t=0$ and $t=T$ for the spatially three dimensional problem.}
 \label{plot_kite3d}
\end{figure}
\begin{figure}
\begin{center}
   \begin{tikzpicture}[scale=0.75]
    \begin{semilogyaxis}[ xlabel=$i$, ylabel={$\| u - u_h \|_{L^2(\Omega(T))}$},
    legend entries ={ $k=2$}, legend style={anchor=north,legend columns=1, draw=none}, legend pos = south west,
     ]
     \addplot table [x index = 0, y index =1] {num_exp/out/conv_kite3d_DG_ks2_kt2_both_nref4_gamma0.05_K0.0_sm0_ktls1_ksls1_alpha0.0.dat};
     \addplot[gray, dashed, domain=0:3] {(1/2^(x-0.5)))^3};
    \end{semilogyaxis}
                  \node[scale=0.75] at (4.5,5.25) {$O(h^{3})= O(\Delta t^{3})$};
     \draw[scale=0.75, gray, dash=on 2.25pt off 2.25pt phase 0pt, line width=0.4*0.75pt] (6.25/0.75,5.25/0.75) -- (6.8/0.75,5.25/0.75);
   \end{tikzpicture}
   \begin{tikzpicture}[scale=0.75]
    \begin{semilogyaxis}[ xlabel=$i$, ylabel={$| u - u_h|_{H^1(Q)}$},
    legend entries ={ $k=2$}, legend style={anchor=north,legend columns=1, draw=none}, legend pos = south west,
     ]
     \addplot table [x index = 0, y index =2] {num_exp/out/conv_kite3d_DG_ks2_kt2_both_nref4_gamma0.05_K0.0_sm0_ktls1_ksls1_alpha0.0.dat};
     \addplot[gray, dashed, domain=1:3] {(1/2^(x-3)))^2};
    \end{semilogyaxis}
                  \node[scale=0.75] at (4.5,5.25) {$O(h^{2})= O(\Delta t^{2})$};
     \draw[scale=0.75, gray, dash=on 2.25pt off 2.25pt phase 0pt, line width=0.4*0.75pt] (6.25/0.75,5.25/0.75) -- (6.8/0.75,5.25/0.75);
   \end{tikzpicture}
\end{center}
\caption{Convergence of the discrete method for $k=k_s=k_t$ and uncurved meshes in $L^2$ and $H^1$ (semi)norms. 3D test case.}
\label{plot4}
\end{figure}
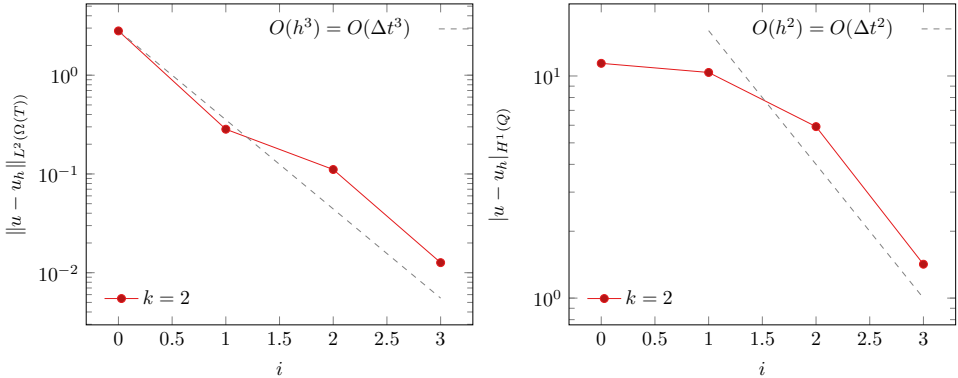

We conclude that our proposed method allows for a straightforward handling of spatially three-dimensional moving objects and leave further applications for future research.
\bibliographystyle{siamplain}
\bibliography{paper}

\end{document}